\pgfplotsset{compat=1.15}
\newcommand{\res}{\!\!\mathop{\hbox{
                                \vrule height 7pt width .5pt depth 0pt
                                \vrule height .5pt width 6pt depth 0pt}}
                                \nolimits}
\def\z{{\bf z}}
\def\g{{\bf g}}
\def\h{{\bf h}}
\newtheorem{theorem}{Theorem}[section]
\newtheorem{lemma}[theorem]{Lemma}
\newtheorem{definition}[theorem]{Definition}
\newtheorem{proposition}[theorem]{Proposition}
\newtheorem{corollary}[theorem]{Corollary}
\newtheorem{remark}[theorem]{Remark}
\newtheorem{example}[theorem]{Example}
\newtheorem*{theorem*}{\it Theorem}
\def\vint_#1{\mathchoice%
          {\mathop{\kern 0.2em\vrule width 0.6em height 0.69678ex depth -0.58065ex
                  \kern -0.8em \intop}\nolimits_{\kern -0.4em#1}}%
          {\mathop{\kern 0.1em\vrule width 0.5em height 0.69678ex depth -0.60387ex
                  \kern -0.6em \intop}\nolimits_{#1}}%
          {\mathop{\kern 0.1em\vrule width 0.5em height 0.69678ex
              depth -0.60387ex
                  \kern -0.6em \intop}\nolimits_{#1}}%
          {\mathop{\kern 0.1em\vrule width 0.5em height 0.69678ex depth -0.60387ex
                  \kern -0.6em \intop}\nolimits_{#1}}}
\def\vintslides_#1{\mathchoice%
          {\mathop{\kern 0.1em\vrule width 0.5em height 0.697ex depth -0.581ex
                  \kern -0.6em \intop}\nolimits_{\kern -0.4em#1}}%
          {\mathop{\kern 0.1em\vrule width 0.3em height 0.697ex depth -0.604ex
                  \kern -0.4em \intop}\nolimits_{#1}}%
          {\mathop{\kern 0.1em\vrule width 0.3em height 0.697ex depth -0.604ex
                  \kern -0.4em \intop}\nolimits_{#1}}%
          {\mathop{\kern 0.1em\vrule width 0.3em height 0.697ex depth -0.604ex
                  \kern -0.4em \intop}\nolimits_{#1}}}
\def\R{\mathbb R}
\def\N{\mathbb N}
\def\g{\hbox{\bf g}}
\numberwithin{equation}{section}
\def\1{\raisebox{2pt}{\rm{$\chi$}}}
\def\g{{\bf g}}
\def\z{{\bf z}}
\definecolor{violet(ryb)}{rgb}{0.53, 0.0, 0.69}
\begin{document}

\title[Gelfand-Type problems  in RWS]
{Gelfand-Type problems in Random Walk Spaces }

\author[ J. M. Mazon, A. Molino and J. Toledo]{ J. M. Maz\'on, A. Molino and J. Toledo}

\address{J. M. Maz\'{o}n: Departamento de An\'{a}lisis Matem\`atico,
Universitat de Val\`encia, Dr. Moliner 50, 46100 Burjassot, Spain.
 {\tt mazon@uv.es }}

\address{A.  Molino:  Departamento de Matem\'aticas, Facultad de Ciencias Experimentales,
Universidad de Almeria, Ctra. de Sacramento sn. 04120, La Ca\~nada de San Urbano, Almer\'ia, Spain.
 {\tt amolino@ual.es }}

\address{J. Toledo: Departamento de An\'{a}lisis Matem\`atico,
Universitat de Val\`encia, Dr. Moliner 50, 46100 Burjassot, Spain.
 {\tt toledojj@uv.es }}

\keywords{Gelfand-type problem, Random walk space, weighted graph, nonlocal problem, stable solution \\
\indent 2020 {\it Mathematics Subject Classification.} 35R02, 05C81, 05C81, 35A15, 53J61 }

\date{} 

\begin{abstract}
This paper deals with Gelfand-type problems
 \begin{equation}\label{Gelfand10}
\qquad\qquad\left\{\begin{array}{ll} - \Delta_m u = \lambda f(u), \quad&\hbox{in} \ \Omega, \ \lambda >0, \\[10pt] u =0, \quad&\hbox{on} \ \partial_m\Omega,  \end{array} \right.
 \end{equation}
  in the framework of Random Walk Spaces, which includes as particular cases: Gelfand-type problems   posed on  locally finite weighted connected graphs and Gelfand-type problems driven by convolution  integrable kernels. Under the same assumption on the nonlinearity $f$ as in the local case, we show there exists an extremal parameter $\lambda^* \in (0, \infty)$ such that, for $0 \leq \lambda < \lambda^*$, problem \eqref{Gelfand10} admits a minimal bounded solution $u_\lambda$ and there are not solution for $\lambda > \lambda^*$. Moreover, assuming $f$ is convex,   we show that Problem \eqref{Gelfand10} admits a minimal bounded solution for $\lambda = \lambda^*$. We also show that $u_\lambda$ are stable, and, for $f$   strictly convex, we show that they are the unique stable solutions. We give simple examples that illustrate the many situations that can occur when solving Gelfand-type problems on weighted graphs.
\end{abstract}

\maketitle

%
%
%
%
%
%

\section{Introduction}
\setcounter{equation}{0}

The classical Gelfand-type problem deals the existence and boundedness of positive solutions to the following semilinear elliptic equation
\begin{equation}\label{Gelfand0}
 \left\{\begin{array}{ll} - \Delta u = \lambda f(u) \quad&\hbox{in} \ \Omega, \\[10pt] u =0 \quad&\hbox{on} \ \partial\Omega,  \end{array} \right.
 \end{equation}
where $\lambda \geq 0$, $\Omega$ is an open bounded subset of $\R^n$ with Lipschitz–continuous boundary and  $f\in \mathcal{C}^1([0,\infty[)$ satisfying the following hypotheses
\begin{equation}\label{C1}
f\,\, \textrm{is increasing},\,\, f(0)>0 \,\, \textrm{and superlinear at infinity:} \lim_{s\to \infty}\frac{f(s)}{s}=\infty.
\end{equation}

Gelfand-type problem appears in a number of applications. For instance, it is worth mentioning that Lord Kelvin proposed that such an equation describes an isothermal ball of gas in gravitational equilibrium (see \cite{Ch}). As well as in thermal explosions \cite{FK} and temperature distribution in an object heated by a uniform electric current \cite{KC}. There is an extensive literature of works on applications related to the problem, among others \cite{S,ZB} and the references therein. The particular case $f(s) = e^s$  was first presented by Barenblatt in a volume edited by
Gelfand \cite{G1}, and was motivated by problems occurring in combustion theory (see also \cite{JL}).

It is well-known (see for instance the monograph \cite{D}) that, under assumption \eqref{C1}, there exists an extremal parameter $\lambda^* \in (0, \infty)$ such that, for $0 \leq \lambda < \lambda^*$,  \eqref{Gelfand0}
admits a minimal classical nonnegative solution $u_\lambda$.  On the other hand, if $\lambda > \lambda^*$, then \eqref{Gelfand0}
has no classical solution. The family of minimal solutions $\{ u_\lambda \, : \,  0 \leq \lambda <\lambda^*  \}$ is increasing in $\lambda$, and its limit as $\lambda \uparrow \lambda^*$ is a weak solution $u^* = u_{\lambda^*}$ of \eqref{Gelfand0} for $\lambda = \lambda^*$. The function $u^*$ is called the {\it extremal solution} of \eqref{Gelfand0}.

 In order to prove the regularity of $u^*$ an important role is played by the stability condition. Concretely, a weak solution of  \eqref{Gelfand0} is called {\it stable} if  the energy functional associated to~\eqref{C1}
$$\mathcal{E}(u):= \int_\Omega \left( \frac12 \vert \nabla u \vert^2 -  \lambda F(u) \right) dx, \quad \hbox{where} \quad F' = f, $$
has
a nonnegative definite second variation. In the $1970s$ Crandall and Rabinowitz \cite{CrR} established that if $f(s) = e^s$ or $f(s) = (1 + s)^p$ with $p >1$
then stable solutions in any smooth bounded domain $\Omega$ are bounded (and hence smooth and
analytic, by classical elliptic regularity theory) when $n \leq  9$ and therefore $u^*$ is bounded.
The last twenty-five years, for a certain type of non-linearities $f$, have produced a large literature on Gelfand-type problems. See the monograph \cite{D} for an extensive list of results and references. The main developments proving that stable solutions to \eqref{Gelfand0}  are smooth (no matter what the nonlinearity $f$ is) were made by different authors, see, for example,  \cite{N}, \cite{CC}, \cite {V}, \cite{CR} (see also the survey by Cabr\'{e} \cite{C}). The above results give some partial answer to the an open question by Brezis \cite{BrezisOP} who asks if it possible in dimension $n \leq 9$ to construct some $f$ and some $\Omega$ for which a singular stable solution exists.
 Cabr\'{e}, Figalli, Ros-Oton and  Serra  (\cite{CFRS}) finally solved the problem, by establishing the regularity of stable solutions to \eqref{Gelfand0}   in the interior of any open set $\Omega$ in the optimal dimensions $n \leq 9$ under the only
requirement for the nonlinearity $f$ to be nonnegative; furthermore, adding the vanishing
boundary condition $u = 0$ on $\partial \Omega$, they proved regularity up to the boundary when $\Omega$ is
of class $C^3$ and $n \leq  9$, and when $f$ is nonnegative, nondecreasing and convex.

 In this framework, a larger classes of operators are considered; we highlight the $p$-Laplacian, see \cite{CFM,CS1,GAP,SAN, SAN1} and the references therein;  the Laplacian with a quadratic gradient term \cite{ACM,M}; the $1$-homogeneous $p$-Laplacian \cite{CMR}; the $1$-Laplacian \cite{MS} and the $k$-Hessian \cite{J,JS}.

For  nonlocal operators, only in the case of singular kernels there are results about   Gelfand-type problems. For instance,  for the fractional Laplacian $(-\Delta )^s$, there are some results in the line of the above classical result where they come into play the dimension $n$ and also $s$ (see for instance \cite{RS1} and \cite{R1}). Now, to our knowledge, there are no results for the case nonlocal problems with integrable kernels.

Our aim is to study the  Gelfand-type problems in the framework of random walk spaces, which covers the cases of  weighted graphs and of   nonlocal problems with integrable  kernels. Like for the local case we prove that   if $f$ satisfies \eqref{C1} then there exists an extremal parameter $\lambda^* \in (0, \infty)$ such that, for $0 \leq \lambda < \lambda^*$ admits a minimal bounded solution $u_\lambda$, and there are not solution for $\lambda > \lambda^*$. Moreover, assuming $f$ is strictly convex,  the problem admits a minimal bounded solution for $\lambda = \lambda^*$, i.e, a extremal solution. Contrary to what happens in the local case, any solution of the problem is bounded from above and below for two functions that only depends of $f$ and $\lambda$, independently of the random walk space involved.

We also study the stability of the solutions, showing that for $0 \leq \lambda < \lambda^*$ the minimal bounded solutions $u_\lambda$ are stable; and, in the case that $f$ is strictly convex, we show that they are the only stable solutions.

We give simple but   illustrative examples of the many situations that can occur for the solutions of Gelfand-type problems on weighted graphs. For instance we give an example of a simple weighted graph and a convex function $f$ for which there exists infinitely many extremal solutions and also a simple  weighted graph and a non-convex function $f$  for which the function  $\lambda\mapsto ||u_\lambda||_\infty$ is not continuous.

\section{Preliminaires}
\setcounter{equation}{0}

 \subsection{Random walk spaces}  We recall some concepts and results about random walk spaces  given in \cite{MST0},  \cite{MST2} and \cite{MSTBook}.

 Let $(X,\mathcal{B})$ be a measurable space such that the $\sigma$-field $\mathcal{B}$ is countably generated.
A random walk $m$
on $(X,\mathcal{B})$ is a family of probability measures $(m_x)_{x\in X}$
on $\mathcal{B}$ such that $x\mapsto m_x(B)$ is a measurable function on $X$ for each fixed $B\in\mathcal{B}$.

The notation and terminology chosen in this definition comes from Ollivier's paper~\cite{O}. As noted in that paper, geometers may think of $m_x$ as a replacement for the notion of balls around $x$, while in probabilistic terms we can rather think of these probability measures as defining a Markov chain whose transition probability from $x$ to $y$ in $n$ steps is
$$
\displaystyle
dm_x^{*n}(y):= \int_{z \in X}  dm_z(y)dm_x^{*(n-1)}(z), \ \ n\ge 1
$$
and $m_x^{*0} = \delta_x$, the dirac measure at $x$.

\begin{definition} {\rm
If $m$ is a random walk on $(X,\mathcal{B})$ and $\mu$ is a $\sigma$-finite measure on $X$. The convolution of $\mu$ with $m$ on $X$ is the measure defined as follows:
$$\mu \ast m (A) := \int_X m_x(A)d\mu(x)\ \ \forall A\in\mathcal{B},$$
which is the image of $\mu$ by the random walk $m$.}
\end{definition}

\begin{definition}  {\rm If $m$ is a random walk on $(X,\mathcal{B})$,
a $\sigma$-finite measure $\nu$ on $X$ is {\it invariant}
with respect to the random walk $m$ if
 $$\nu\ast m = \nu.$$

The measure $\nu$ is said to be {\it reversible} if moreover, the detailed balance condition $$dm_x(y)d\nu(x)  = dm_y(x)d\nu(y),$$ holds true.}
\end{definition}

\begin{definition} {\rm
Let $(X,\mathcal{B})$ be a measurable space where the $\sigma$-field $\mathcal{B}$ is countably generated. Let $m$ be a random walk on $(X,\mathcal{B})$ and $\nu$ an invariant measure with respect to $m$. The measurable space together with $m$ and $\nu$ is then called a random walk space
and is denoted by $[X,\mathcal{B},m,\nu]$.}
\end{definition}

 If $(X,d)$ is a Polish metric space (separable completely metrizable topological space), $\mathcal{B}$ is its Borel $\sigma$-algebra and $\nu$ is a Radon measure (i.e. $\nu$ is inner regular
and locally finite), then we denote  $[X,\mathcal{B},m,\nu]$ as $[X,d,m,\nu]$,   and call it a metric random walk space.

\begin{definition} {\rm
We say that a random walk space $[X, \mathcal{B}, m, \nu]$ is $m$-connected
if, for every $D\in \mathcal{B}$ with $\nu(D)>0$ and $\nu$-a.e. $x\in X$,
$$\sum_{n=1}^{\infty}m_x^{\ast n}(D)>0.$$
}
\end{definition}

 Formally, the idea behind the above definition is that all the parts of the space considered as $m$-connected can be reached after certain number of jumps (induced by the random walk $m$), no matter what the starting point is, except for, at most, a $\nu$-null set of points.

 \begin{definition} {\rm
Let  $[X,\mathcal{B},m,\nu]$ be a random walk space and let $A$, $B\in\mathcal{B}$. We define the {\it $m$-interaction} between $A$ and $B$ as
$$ L_m(A,B):= \int_A \int_B dm_x(y) d\nu(x)=\int_A m_x(B) d\nu(x).
 $$
 }
 \end{definition}

  The following result gives a characterization of $m$-connectedness in terms of the $m$-interaction between sets,  and provides more intuition for the concept and terminology used. In~\cite{MSTBook} one can find its relation with $\nu$-essential irreducibility of a random walk $m$ between other characterizations, we do not enter here in such details. In Section~\ref{nonlocal.notions.1.section} we introduce the  $m$-Laplacian and give another characterization  via the ergodicity of such operator that will be used later on.

\begin{proposition}(\cite[Proposition 2.11]{MST0}, \cite[Proposition 1.34]{MSTBook})
 Let $[X,\mathcal{B},m,\nu]$ be a random walk space.
The following statements are equivalent:
\item{ (i) } $[X,\mathcal{B},m,\nu]$ is $m$-connected.
\item {(ii)} If $ A,B\in\mathcal{B}$ satisfy $A\cup B=X$ and $L_m(A,B)= 0$, then either $\nu(A)=0$ or $\nu(B)=0$.
\end{proposition}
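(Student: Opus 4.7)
The plan is to prove (i) $\Leftrightarrow$ (ii) by contrapositive in each direction, using the invariance $\nu\ast m=\nu$ to promote a one-step statement about $m_x$ into an $n$-step one.

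For (ii) $\Rightarrow$ (i), I would assume $m$-connectedness fails and pick $D\in\mathcal{B}$ with $\nu(D)>0$ such that
\[
A' \;:=\; \Bigl\{x\in X \,:\, \sum_{n\geq 1} m_x^{*n}(D)=0\Bigr\}
\]
has $\nu(A')>0$, and then violate (ii) with the pair $(A',X\setminus A')$. The crucial observation is that $A'$ is absorbing: for each $x\in A'$ and each $n\geq 2$, the identity $0 = m_x^{*n}(D) = \int m_z^{*(n-1)}(D)\,dm_x(z)$ forces $m_z^{*(n-1)}(D) = 0$ for $m_x$-a.e.\ $z$; intersecting the countable family of these a.e.\ statements gives $m_x(A')=1$, that is, $L_m(A',X\setminus A')=0$. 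Since $A'\cup(X\setminus A')=X$, it only remains to check $\nu(X\setminus A')>0$; but invariance gives $\nu(D)=\int m_x(D)\,d\nu(x) = \int_{A'} m_x(D)\,d\nu(x) + \int_{X\setminus A'} m_x(D)\,d\nu(x)$ where the first summand vanishes, so $\nu(X\setminus A')\geq\nu(D)>0$, contradicting (ii).

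For (i) $\Rightarrow$ (ii), I would suppose $A\cup B = X$, $L_m(A,B)=0$ and $\nu(A),\nu(B)>0$ and try to contradict (i). A first use of invariance yields $\nu(A\cap B)=0$: from $\nu(B)=\int m_x(B)\,d\nu(x) = L_m(A,B)+L_m(A^c,B) = L_m(A^c,B)\leq\nu(A^c)$, combined with $\nu(X)=\nu(A\cup B)\leq\nu(A)+\nu(B)$, one deduces $\nu(A^c)=\nu(B)$. A second use yields the technical heart: because $L_m(A,F)\leq L_m(X,F)=\nu(F)$, we have $L_m(A,\cdot)\ll\nu$; writing $L_m(A,dz)=\rho_A(z)\,d\nu(z)$ one has $\rho_A\leq 1$. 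The hypothesis $L_m(A,B)=0$ forces $\rho_A=0$ on $B$; the inclusion $A^c\subseteq B$ (from $A\cup B=X$) makes $L_m(A,A^c)\leq L_m(A,B)=0$, so $L_m(A,A)=L_m(A,X)=\nu(A)$, whence $\rho_A=1$ on $A$. Thus $\rho_A=\chi_A$ $\nu$-a.e.

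With this density in hand, set $u_n(x):=m_x^{*n}(B)$. Fubini and the Chapman--Kolmogorov identity give
\[
\int_A u_{n+1}\,d\nu \;=\; \int_A\int_X u_n(z)\,dm_x(z)\,d\nu(x) \;=\; \int_X u_n(z)\,L_m(A,dz) \;=\; \int_A u_n\,d\nu,
\]
and the base case $\int_A u_1\,d\nu=L_m(A,B)=0$ propagates by induction. Nonnegativity then gives $m_x^{*n}(B)=0$ $\nu$-a.e.\ on $A$ for every $n\geq 1$, so $\sum_{n\geq 1} m_x^{*n}(B)=0$ on a set of positive $\nu$-measure, contradicting (i) applied with $D=B$. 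The main obstacle is exactly this last step: since the measures $m_x$ need not be absolutely continuous with respect to $\nu$, one cannot pass pointwise from the $\nu$-a.e.\ statement $m_x(B)=0$ on $A$ to the $m_x$-a.e.\ statements required to iterate; invariance of $\nu$, encoded in the identity $\rho_A=\chi_A$, is precisely what keeps the induction alive at the integrated level.
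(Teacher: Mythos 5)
The paper does not prove this proposition; it is quoted from \cite[Proposition 2.11]{MST0} and \cite[Proposition 1.34]{MSTBook}, so there is no in-text argument to compare against. Judged on its own, your proof is essentially correct and follows the natural route: the absorbing-set construction for (ii)$\Rightarrow$(i) is clean (the countable intersection giving $m_x(A')=1$ and the invariance step $\nu(D)\le\nu(X\setminus A')$ are both right), and for (i)$\Rightarrow$(ii) the idea of pushing the one-step vanishing $L_m(A,B)=0$ through all $n$-step kernels at the integrated level, via the density $\rho_A$ of $L_m(A,\cdot)$ with respect to $\nu$, is exactly the right way to avoid the absolute-continuity issue you flag at the end.

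One claim is overstated: the identity $\rho_A=\chi_A$ $\nu$-a.e. You derive $\rho_A=1$ on $A$ from $\int_A\rho_A\,d\nu=L_m(A,A)=\nu(A)$ together with $\rho_A\le 1$, but that subtraction is only valid when $\nu(A)<\infty$; for a merely $\sigma$-finite invariant measure (and without reversibility, which the proposition does not assume) you cannot conclude $L_m(A^c,A)=0$, hence not $\rho_A=1$ on $A$. The same caveat applies to your preliminary deduction $\nu(A\cap B)=0$, which in any case is never used afterwards. Fortunately the induction does not need the equality: from $\rho_A\le 1$ everywhere and $\rho_A=0$ $\nu$-a.e.\ on $B\supseteq X\setminus A$ you get $\rho_A\le\chi_A$ $\nu$-a.e., hence
\[
\int_A u_{n+1}\,d\nu=\int_X u_n\,\rho_A\,d\nu\le\int_A u_n\,d\nu ,
\]
and the base case $\int_A u_1\,d\nu=L_m(A,B)=0$ propagates exactly as you intend. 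So replace the equality in your displayed chain by an inequality and state only $\rho_A\le\chi_A$; with that adjustment the argument is complete in full generality.
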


\begin{definition}
Let $[X,\mathcal{B},m,\nu]$ be a reversible random walk space, and let $\Omega\in\mathcal{B}$ with $\nu(\Omega)>0$. We denote by  $\mathcal{B}_\Omega$ to the following $\sigma$-algebra
 $$\mathcal{B}_\Omega:=\{B\in\mathcal{B} \, : \, B\subset \Omega\}.$$
 We  will denote by $\nu \res \Omega$ the restriction of $\nu$ to $\mathcal{B}_\Omega$.

We say that $\Omega$ is {\it $m$-connected
(with respect to $\nu$)} if $L_m(A,B)>0$ for every pair of non-$\nu$-null   sets $A$, $B\in \mathcal{B}_\Omega$ such that $A\cup B=\Omega$.
\end{definition}

Let us see now some examples of random walk spaces.

 \begin{example}\label{example.nonlocalJ} \rm
Consider the metric measure space $(\R^N, d, \mathcal{L}^N)$, where $d$ is the Euclidean distance and $\mathcal{L}^N$ the Lebesgue measure on $\R^N$ (which we will also denote by $|.|$). For simplicity, we will write $dx$ instead of $d\mathcal{L}^N(x)$. Let  $J:\R^N\to[0,+\infty[$ be a measurable, nonnegative and radially symmetric
function  verifying  $\int_{\R^N}J(x)dx=1$. Let $m^J$ be the following random walk on $(\R^N,d)$:
$$m^J_x(A) :=  \int_A J(x - y) dy \quad \hbox{ for every $x\in \R^N$ and every Borel set } A \subset  \R^N  .$$
Then, applying Fubini's Theorem it is easy to see that the Lebesgue measure $\mathcal{L}^N$ is reversible with respect to $m^J$. Therefore, $[\R^N, d, m^J, \mathcal{L}^N]$ is a reversible metric random walk space.

 Observe that a domain in $\R^N$ is $m^J$-connected, and we can also have disjoint domains $A$ and $B$ whose union is $m^J$-connected if  $L_{m^J}(A,B)>0$.  \hfill $\blacksquare$
\end{example}

\begin{example}\label{example.graphs}[Weighted discrete graphs] \rm Consider a locally finite  weighted discrete graph $$G = (V(G), E(G)),$$ where $V(G)$ is the vertex set, $E(G)$ is the edge set and each edge $(x,y) \in E(G)$ (we will write $x\sim y$ if $(x,y) \in E(G)$) has a positive weight $w_{xy} = w_{yx}$ assigned. Suppose further that $w_{xy} = 0$ if $(x,y) \not\in E(G)$.  Note that there may be loops in the graph, that is, we may have $(x,x)\in E(G)$ for some $x\in V(G)$ and, therefore, $w_{xx}>0$. Recall that a graph is locally finite if every vertex is only contained in a finite number of edges.

 A finite sequence $\{ x_k \}_{k=0}^n$  of vertices of the graph is called a {\it  path} if $x_k \sim x_{k+1}$ for all $k = 0, 1, ..., n-1$. The {\it length} of a path $\{ x_k \}_{k=0}^n$ is defined as the number $n$ of edges in the path. We will assume that  $G = (V(G), E(G))$ is {\it connected}, that is, for any two vertices $x, y \in V$, there is a path connecting $x$ and $y$, that is, a path $\{ x_k \}_{k=0}^n$ such that $x_0 = x$ and $x_n = y$.  Finally, if $G = (V(G), E(G))$ is connected, the {\it graph distance} $d_G(x,y)$ between any two distinct vertices $x, y$ is defined as the minimum of the lengths of the paths connecting $x$ and $y$. Note that this metric is independent of the weights.

For $x \in V(G)$ we define the weight at $x$ as
$$d_x:= \sum_{y\sim x} w_{xy} = \sum_{y\in V(G)} w_{xy},$$
and the neighbourhood of $x$ as $N_G(x) := \{ y \in V(G) \, : \, x\sim y\}$. Note that, by definition of locally finite graph, the sets $N_G(x)$ are finite. When all the weights are~$1$, $d_x$ coincides with the degree of the vertex $x$ in a graph, that is,  the number of edges containing $x$.

For each $x \in V(G)$  we define the following probability measure
$$m^G_x:=  \frac{1}{d_x}\sum_{y \sim x} w_{xy}\,\delta_y.\\ \\
$$
It is not difficult to see that the measure $\nu_G$ defined as
 $$\nu_G(A):= \sum_{x \in A} d_x,  \quad A \subset V(G),$$
is a reversible measure with respect to the random walk $m^G$. Therefore, we have that $[V(G),\mathcal{B},m^G,\nu_G]$ is a reversible random walk space being $\mathcal{B}$ is the $\sigma$-algebra of all subsets of $V(G)$. Moreover $[V(G),d_G,m^G,\nu_G]$ is a reversible metric random walk space. Observe also that in this case,    the connectedness assumed for  $G$ is equivalent to the $m^G$-connectedness of $[V(G),d_G,m^G,\nu_G]$  (this is in contrast with the situation in Example~\ref{example.nonlocalJ}). \hfill $\blacksquare$
\end{example}

\begin{example} \rm Given a random walk  space $[X,\mathcal{B},m,\nu]$ and $\Omega \in \mathcal{B}$ with $\nu(\Omega)>0$, let
$$m^{\Omega}_x(A):=\int_A d m_x(y)+\left(\int_{X\setminus \Omega}d m_x(y)\right)\delta_x(A) \quad \hbox{ for every } A\in\mathcal{B}_\Omega  \hbox{ and } x\in\Omega.
$$
Then, $m^{\Omega}$ is a random walk on $(\Omega,\mathcal{B}_\Omega)$ and it easy to see that $\nu \res \Omega$   is invariant with respect to $m^{\Omega}$. Therefore,  $[\Omega,\mathcal{B}_\Omega,m^{\Omega},\nu \res \Omega]$ is a random walk space. Moreover, if $\nu$ is reversible with respect to $m$ then $\nu \res \Omega$ is  reversible with respect to $m^{\Omega}$. Of course, if $\nu$ is a probability measure we may normalize $\nu \res \Omega$ to obtain the random walk space
$$\left[\Omega,\mathcal{B}_\Omega,m^{\Omega}, \frac{1}{\nu(\Omega)}\nu \res \Omega \right].$$
 Note that, if $[X,d,m,\nu]$ is a metric random walk space and $\Omega$ is closed, then we have that $[\Omega,d,m^{\Omega},\nu \res \Omega]$ is also a metric random walk space, where we abuse notation and denote by $d$ the restriction of $d$ to $\Omega$.

In particular, in the context of Example \ref{example.nonlocalJ}, if $\Omega$ is a closed and bounded subset of $\R^N$, we obtain the metric random walk space $[\Omega, d, m^{J,\Omega},\mathcal{L}^N\res \Omega]$ where
$m^{J,\Omega} := (m^J)^{\Omega}$; that is,
$$m^{J,\Omega}_x(A):=\int_A J(x-y)dy+\left(\int_{\R^n\setminus \Omega}J(x-z)dz\right)d\delta_x,$$ for every Borel set   $A \subset  \Omega$  and $x\in\Omega$.
\hfill $\blacksquare$
\end{example}

 Another important  example of random walk spaces are  weighted hypergraphs (see \cite[Example 1.44]{MSTBook}).

\subsection{Nonlocal operators}\label{nonlocal.notions.1.section}

Let us introduce the nonlocal counterparts of some classical concepts  and operators that will play a role in the paper.

\begin{definition}{\rm
Let $[X,\mathcal{B},m,\nu]$ be a random walk space. Given a function $f: X \rightarrow \R$ we define its {\it nonlocal gradient}
$\nabla f: X \times X \rightarrow \R$ as
$$\nabla f (x,y):= f(y) - f(x) \quad \forall \, x,y \in X.$$

Moreover, given $\z : X \times X \rightarrow \R$, its {\it $m$-divergence}
${\rm div}_m \z : X \rightarrow \R$ is defined, when it has sense, as
 $$({\rm div}_m \z)(x):= \frac12 \int_{X} (\z(x,y) - \z(y,x)) dm_x(y).$$
 }
\end{definition}

  For the nonlocal gradient we have the following {\it Leibnitz formula}:
$$
\nabla (fg) (x,y) = \nabla f(x,y) g(x) +\nabla g(x,y)f(y) =  \nabla f(x,y)g(y)+ \nabla g(x,y)f(x) .
$$
We also have
$$\nabla \left(\frac{1}{g} \right)(x,y) = - \frac{\nabla g(x,y)}{g(x) g(y)}.$$
Therefore, we have
$$
\begin{array}{rl}\displaystyle
\nabla \left(\frac{f^2}{g}\right) (x,y)\!\!\! & \displaystyle = \frac{\nabla f(x,y)(f(x) + f(y))g(x) - f^2(x) \nabla g(x,y)}{g(x) g(y)}\\ \\
\displaystyle &\displaystyle  = \frac{\nabla f(x,y)(f(x) + f(y))g(y) - f^2(y) \nabla g(x,y)}{g(x) g(y)}.
\end{array}
$$
 Observe that also
 \begin{equation}\label{Leib11dos}
\nabla \left(\frac{f^2}{g}\right) (x,y)  = \frac{\nabla f(x,y)(f(x) + f(y))\frac{g(x)+g(y)}{2} - \frac{f^2(x)+f^2(y)}{2} \nabla g(x,y)}{g(x) g(y)}.
\end{equation}

We define the (nonlocal) Laplace operator as follows.
\begin{definition}{\rm
Let $[X,\mathcal{B},m,\nu]$ be a random walk space, we define the {\it $m$-Laplace operator} (or {\it $m$-Laplacian}) from $L^1(X,\nu)$ into itself as $\Delta_m:= M_m - I$, i.e.,
$$\Delta_m f(x)= \int_X f(y) dm_x(y) - f(x) = \int_X (f(y) - f(x)) dm_x(y), \quad x\in X,$$
for $f\in L^1(X,\nu)$.
}
\end{definition}
 Note that
$$\Delta_m f (x) = {\rm div}_m (\nabla f)(x).$$

 From the reversibility of $\nu$ respect to $m$, we have the following {\it integration by parts formula}
$$\begin{array}{ll}
 -\displaystyle\int_{\Omega_m}\int_{\Omega_m}  \Delta f(x,y)  dm_x(y)g(x) d\nu(x) \\[10pt] = \displaystyle\frac12 \int_{\Omega_m}\int_{\Omega_m} \nabla f(x,y) \nabla g(x,y) dm_x(y) d\nu(x), \end{array}
$$
 if $f, g \in L^2(\Omega_m, \nu)$.w

In the case of the random walk space associated with a locally finite weighted discrete graph $G=(V,E)$ (as defined in Example~\ref{example.graphs}), the $m^G$-Laplace operator coincides with the graph Laplacian (also called the normalized graph Laplacian) studied by many authors (see, for example, \cite{BJ}, \cite{BJL}, \cite{Elmoatazetal}, \cite{AGrigor}, ):
$$\Delta_{m^G} u(x):=\frac{1}{d_x}\sum_{y\sim x}w_{xy}(u(y)-u(x)), \quad u\in L^2(V,\nu_G), \ x\in V .$$
 In the case of the random walk space $m^J$ of   Example \ref{example.nonlocalJ}, we have
$$\Delta_{m^J} u(x) = \int_{\R^N} J(x-y) (u(y) - u(x)) dy,$$
that coincides with the nonlocal Laplacian studied in \cite{ElLibro}.

\begin{theorem}[\cite{MSTBook}]
Let $[X,\mathcal{B},m,\nu]$ be a random walk space with $\nu(X)<+\infty$ (it can be, in fact, normalized to $1$). Then $[X,\mathcal{B},m,\nu]$ is $m$-connected iff $\Delta_m$ is ergodic, that is,
$$\Delta_mf=0 \ \nu\hbox{-a.e.  implies that $f$ is constant $\nu$-a.e.}$$
\end{theorem}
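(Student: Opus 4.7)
The plan is to prove the two implications separately, using level-set decompositions together with reversibility and invariance of $\nu$.

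Direction ($\Rightarrow$): Assume $[X,\mathcal{B},m,\nu]$ is $m$-connected, and take $f \in L^1(X,\nu)$ with $\Delta_m f = 0$ $\nu$-a.e., i.e., $f(x) = \int_X f(y)\,dm_x(y)$. For each $c \in \R$ set $A_c := \{f > c\}$ and $B_c := \{f \le c\}$, so $A_c \cup B_c = X$. Integrating the identity over $A_c$, splitting the inner integral into the pieces over $A_c$ and over $B_c$, and using the detailed-balance symmetry $dm_x(y)\,d\nu(x) = dm_y(x)\,d\nu(y)$ to swap variables in each resulting double integral, I will obtain
$$\int_{A_c} f(x)\, m_x(B_c)\, d\nu(x) = \int_{B_c} f(x)\, m_x(A_c)\, d\nu(x).$$
Since $f > c$ on $A_c$ and $f \le c$ on $B_c$, the left side is at least $c\,L_m(A_c,B_c)$ and the right side at most $c\,L_m(B_c,A_c) = c\,L_m(A_c,B_c)$ (reversibility again). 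The forced equality, combined with $f - c > 0$ strictly on $A_c$, yields $m_x(B_c) = 0$ for $\nu$-a.e.\ $x \in A_c$, i.e.\ $L_m(A_c,B_c) = 0$. By $m$-connectedness, $\nu(A_c) = 0$ or $\nu(B_c) = 0$; letting $c$ vary over $\mathbb{Q}$ shows $f$ is constant $\nu$-a.e.

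Direction ($\Leftarrow$): Assume $\Delta_m$ is ergodic and let $A,B \in \mathcal{B}$ with $A \cup B = X$ and $L_m(A,B) = 0$. I apply ergodicity to $f := \chi_A$, for which $\Delta_m f(x) = m_x(A) - \chi_A(x)$, and show this vanishes $\nu$-a.e. For $\nu$-a.e.\ $x \in A$, $L_m(A,B) = 0$ gives $m_x(B) = 0$; since $A^c \subseteq B$ (as $A \cup B = X$), this forces $m_x(A) = 1$, so $\Delta_m \chi_A(x) = 0$. For $\nu$-a.e.\ $x \in A^c$, invariance yields
$$\nu(A) = \int_X m_x(A)\,d\nu(x) = \int_A m_x(A)\,d\nu(x) + \int_{A^c} m_x(A)\,d\nu(x) = \nu(A) + \int_{A^c} m_x(A)\,d\nu(x),$$
which gives $m_x(A) = 0$, and again $\Delta_m \chi_A(x) = 0$. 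Ergodicity then makes $\chi_A$ $\nu$-a.e.\ constant: either $\nu(A) = 0$ (done), or $\nu(A^c) = 0$, in which case invariance gives $L_m(A,B) = \int_X m_x(B)\,d\nu(x) = \nu(B)$, forcing $\nu(B) = 0$.

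The main point to get right is modest: in the first direction, tracking the reversibility swaps carefully so the level-set identity emerges with the correct sign pattern; in the second, recognizing that \emph{invariance} (rather than reversibility alone) is the right tool on $A^c$, which ensures that any potentially nonempty overlap $A \cap B$ causes no difficulty.
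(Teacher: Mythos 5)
The paper quotes this theorem from \cite{MSTBook} without reproducing a proof, so there is no internal argument to compare against; I assess your proposal on its own terms.

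Your backward direction is correct and complete: testing ergodicity on $\chi_A$, using $L_m(A,B)=0$ on $A$ and the invariance of $\nu$ on $A^c$, correctly handles a possibly nonempty overlap $A\cap B$, and it uses only invariance, which is all the paper's definition of a random walk space provides. Routing both implications through Proposition 2.11 (the equivalence of $m$-connectedness with the $L_m$ condition on covers) is legitimate, since that equivalence is stated in the paper. The level-set identity in the forward direction, the bounds $cL_m(A_c,B_c)\le \mathrm{LHS}=\mathrm{RHS}\le cL_m(B_c,A_c)$, and the conclusion $m_x(B_c)=0$ a.e.\ on $A_c$ are all computed correctly.

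There is, however, one genuine gap in the forward direction: you invoke the detailed balance condition $dm_x(y)\,d\nu(x)=dm_y(x)\,d\nu(y)$, i.e.\ reversibility, but the theorem is stated for a random walk space, where $\nu$ is only assumed \emph{invariant}. (The symmetry $L_m(B_c,A_c)=L_m(A_c,B_c)$ for the complementary pair does follow from invariance and $\nu(X)<\infty$ alone, but the swap inside $\int_{A_c}\int_{A_c}f(y)\,dm_x(y)\,d\nu(x)$ does not.) As written, your argument proves the implication only for reversible spaces --- enough for every application in this paper, but not for the statement as given. A repair using only invariance: from $f(x)=\int_X f\,dm_x$ and Jensen's inequality, $(f-c)^+(x)\le\int_X (f-c)^+\,dm_x$ pointwise a.e.; integrating against the finite invariant measure forces equality $\nu$-a.e., and subtracting the harmonicity identity yields $(f-c)^-(x)=\int_X (f-c)^-\,dm_x$ a.e., whence $m_x(\{f<c\})=0$ for a.e.\ $x\in\{f\ge c\}$, i.e.\ $L_m(\{f\ge c\},\{f<c\})=0$; these two sets cover $X$, and the rest of your argument goes through unchanged.
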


\subsection{The Lambert $W$ function}

The inverse of the real function $\psi(x) = x e^x$, $x \in \R$, is  called the {\it Lambert $W$ function}. Some examples of problems where the Lambert $W$ function appears can be found in \verb"https://en.wikipedia.org/wiki/Lambert_W_function". The Lambert $W$ function is a mulivalued function because $f$ is not inyective over all $\R$. Nevertheless, $\psi$ is inyective on the intevals $[-\infty, -1[$ and $[-1,+\infty[$. The inverse of $\psi$ on the interval $[-1,+\infty[$ is a single-valued function defined on $[-\frac{1}{e},+\infty[$, denoted by $W_0$ and called  principal branch of the Lambert $W$ function, and the invere on the interval $[-\infty, -1[$ is also a single-valued function defined on $]-\frac{1}{e}, 0[$, denoted by $W_{-1}$ and called the negative branch of the Lambert $W$ function. This function has a special relation with power towers and logarithm towers (see \cite{knuthetal} and~\cite{javiertoledo}.)

 \section{Gelfand-Type Problems in Random Walk Spaces}

\subsection{Assumptions on the spaces and previous results}
Let  $[X,\mathcal{B},m,\nu]$ be a reversible random walk space.
  Given   $\Omega \in \mathcal{B}$,  we define  the {\it $m$-boundary of $\Omega$} by
$$\partial_m\Omega:=\{ x\in X\setminus \Omega \, : \, m_x(\Omega)>0 \}$$
and its {\it $m$-closure} as
$$\Omega_m:=\Omega\cup\partial_m\Omega.$$
  From now on we will assume that  $\Omega$ is $m$-connected (which implies that also  $\Omega_m$  is $m$-connected) and $$0<\nu(\Omega)<\nu(\Omega_m)<\infty.$$

Let us define
 $$L^2_0(\Omega_m, \nu):= \{ u \in L^2(\Omega_m, \nu)  :  u=0 \ \nu\hbox{-a.e. in }  \partial\Omega_m \}.$$
We will assume that $\Omega$ satisfies  a {\it $2$-Poincar\'{e} inequality}, that is,
there exists $\lambda >0$ such that
$$
\lambda \int_\Omega \vert u (x) \vert^2 d\nu(x)\leq   \frac{1}{2} \int_{\Omega_m \times \Omega_m} \vert \nabla u(x,y) \vert^2 d(\nu\otimes m_x)(x,y)
$$
for all $u \in L^2_0(\Omega_m, \nu)$.
This is equivalent to say that
\begin{equation}\label{Realyq} 0<\lambda_{m}(\Omega)  := \inf_{u\in L^2_0(\Omega_m, \nu)\setminus\{0\} }\frac{\displaystyle
\frac12\int_{\Omega_m}\int_{\Omega_m} |\nabla u(x,y)|^2 dm_x(y)d\nu(x)}{\displaystyle\int_\Omega u(x)^2 d\nu(x) }\end{equation}

Join to assumption~\eqref{Realyq} we also assume that
\begin{equation}\label{eigenproblem}\hbox{ exists a non-null } \varphi_{m}\in L^2(\Omega_m,\nu): \ \left\{\begin{array}{l}
\displaystyle- \Delta_m \varphi_{m} (x)=\lambda_{m}(\Omega)\varphi_{m}(x),\quad \nu\hbox{-a.e.} \ x\in\Omega,
\\ \\
\varphi_{m}(x)=0,\quad x\in\partial_m\Omega.
\end{array}\right.
\end{equation}
  Observe that consequently the infimum defining $\lambda_{m}(\Omega)$ in~\eqref{Realyq} is attained at $\varphi_{m}$, that it is unique up to a multiplicative constant and that we can suppose that it is non-negative. We say that $\lambda_{m}(\Omega)$ is  the first eigenvalue of the $m$-Laplacian with homogeneous Dirichlet boundary conditions with associated eigenfunction $\varphi_{m}$.   Note that, from the equation in \eqref{eigenproblem}, we get
  $$\varphi_{m}(x) = \int_\Omega \varphi_{m}(y) dm_x(y) + \lambda_{m}(\Omega)\varphi_{m}(x)  > \lambda_{m}(\Omega)\varphi_{m}(x),\quad \nu\hbox{-a.e.} \ x\in\Omega.$$
  Thus,
  \begin{equation}\label{ok}
  0 < \lambda_{m}(\Omega) < 1.
  \end{equation}
We also have that
\begin{equation}\label{est001}
(1-\lambda_{m}(\Omega))\varphi_{m}(x)=\int_\Omega \varphi_{m}(y)dm_x(y),\quad x\in\Omega.
\end{equation}
Then, by \eqref{ok}, we can write~\eqref{est001} as
\begin{equation}\label{est001N} \varphi_{m}(x)=\frac{1}{1-\lambda_{m}(\Omega)}\int_\Omega \varphi_{m}(y)dm_x(y).
\end{equation}

We moreover assume that {\it there exists an eigenfunction $\varphi_{m}$ associated to   $\lambda_{m}(\Omega)$ such that}
\begin{equation}\label{condH}  \hbox{ $\alpha_\Omega \le \varphi_{m}\le M_\Omega$ in $\Omega$, for some constants $\alpha_\Omega, M_\Omega>0$.}
\end{equation}
  Observe that we are assuming~\eqref{Realyq}, \eqref{eigenproblem} and~\eqref{condH} jointly.

\begin{remark}\rm
\item{ 1.}  Let $[\R^N, d, m^J, \mathcal{L}^N]$  be the metric random walk space   given in Example~\ref{example.nonlocalJ} with $J$ continuous  and compactly supported. For $\Omega$ a bounded domain,
     the assumption~\eqref{condH}  is true,  see~\cite[Section 2.1.1]{ElLibro}.

\item{ 2.}
 For weighted  discrete graphs,   $\lambda_{m^G,1}(\Omega)$ is an eigenvalue with $0<\lambda_{m^G,1}(\Omega)\le 1$ (see~\cite{AGrigor}).    Now,   since we are assuming that $\Omega$ is $m^G$-connected,
 $0<\lambda_{m^G,1}(\Omega)< 1.$
And, by connectedness, using~\eqref{est001N}, we have that~\eqref{condH}  is also true.
\hfill$\blacksquare$
\end{remark}

  Let $\g : \Omega \times \R \rightarrow \R$   a $(\nu \times \mathcal{L}^1)$-measurable function  such that,  for $\nu$-almost all $x \in \Omega_m$, the function $t \mapsto \g(x,t)$ is continuous.
  Consider the problem
$$
\qquad\qquad\left\{\begin{array}{ll} - \Delta_m u = \g(x,u(x)), \quad&\hbox{in} \ \Omega, \\[10pt] u =0, \quad&\hbox{on} \ \partial_m\Omega.  \end{array} \right.\qquad\qquad (P(\g))
$$

 \begin{definition}{\rm We say that a function $\underline{u} \in L^2(\Omega_m, \nu)$ is a {\it subsolution} of  if $(P(\g))$ if verifies
 $$\qquad\qquad\left\{\begin{array}{ll} - \Delta_m \underline{u}(x)  \leq \g(x,\underline{u}(x)) \quad&x \in \Omega, \\[10pt] \underline{u}  \leq 0 \quad&\hbox{on} \ \partial_m\Omega.  \end{array} \right. $$
 We say that a function $\overline{u} \in L^2(\Omega_m, \nu)$ is a {\it supersolution} of  if $(P(\g))$ if verifies
 $$\qquad\qquad\left\{\begin{array}{ll} - \Delta_m \overline{u}(x) \geq \g(x,\overline{u}(x)) \quad&\ x \in \Omega, \\[10pt] \overline{u} \geq 0 \quad&\hbox{on} \ \partial_m\Omega.  \end{array} \right. $$
 We say that a function $u \in L_0^2(\Omega_m, \nu)$ is a {\it solution} of  if $(P(\g))$ if verifies
 $$\qquad\qquad\left\{\begin{array}{ll} - \Delta_m u(x) = \g(x,u(x)) \quad&\ x \in \Omega, \\[10pt] u = 0 \quad&\hbox{on} \ \partial_m\Omega,  \end{array} \right. $$
 that is when $u$ is a subsolution and a supersolution.
 }
 \end{definition}

Consider the {\it energy functional} $\mathcal{E}_{\g} : L_0^2(\Omega_m, \nu) \rightarrow \R$ associate associated with problem $(P(\g))$, i.e.,
$$\mathcal{E}_{\g}(u):= \frac{1}{4 } \int_{\Omega_m \times \Omega_m} \vert \nabla u(x,y) \vert^2 d(\nu\otimes m_x)(x,y) - \int_{\Omega} G(x,u(x)) d\nu(x),$$
with $$G(x,s):= \int_0^s \g(x,t)\, dt.$$
 If $g :  \R \rightarrow \R$ is continuous, we also write $\mathcal{E}_{g}$ and $(P(\g))$ for $\mathcal{E}_{\g}$ and $(P(\g))$ with $\g(x,s)=g(s)$; in this case
$\displaystyle G(x,s)=\int_0^s g(t)\, dt.$
 If $\varphi \in L^2(\Omega, \nu)$, we also write $\mathcal{E}_{\varphi}$ and $(P(\varphi))$ for $\mathcal{E}_{\g}$ and $(P(\g))$ with $\g(x,s)=\varphi(x)$; in this case,
 $\displaystyle G(x,s)=\varphi(x)s.$

 The following result is easy to prove.

\begin{lemma} Assume that
\begin{equation}\label{lqn01}\hbox{$\exists \, g_0 \in L^2(\Omega,\nu)$ such that $\vert \g(x,s) \vert \leq g_0(x)$ $\nu$-a.e and for all $s \in \R$.}
\end{equation}
For $u \in L_0^2(\Omega_m, \nu)$ the Fr\'{e}chet derivative of $\mathcal{E}_{\g}$ at $u$ is given by
$$\langle D\mathcal{E}_{\g}(u), v \rangle = \frac12\int_{\Omega_m \times \Omega_m} \nabla u (x,y) \cdot \nabla v(x,y)d(\nu\otimes m_x)(x,y) - \int_\Omega \g(x,u(x)) v(x) d\nu(x),
$$
$v \in L_0^2(\Omega_m, \nu)$.
\end{lemma}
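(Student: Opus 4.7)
The plan is to expand $\mathcal{E}_{\g}(u+v)-\mathcal{E}_{\g}(u)$ in $v$ and verify that the remainder is $o(\|v\|_{L^2(\Omega_m,\nu)})$, treating the quadratic gradient term and the potential term separately.

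For the quadratic part, the pointwise identity $|\nabla(u+v)|^2 = |\nabla u|^2 + 2\,\nabla u\cdot\nabla v + |\nabla v|^2$ yields, after integration against $\nu\otimes m_x$,
$$\tfrac14\!\int\!|\nabla(u+v)|^2\,d(\nu\otimes m_x) - \tfrac14\!\int\!|\nabla u|^2\,d(\nu\otimes m_x) = \tfrac12\!\int\!\nabla u\cdot\nabla v\,d(\nu\otimes m_x) + \tfrac14\!\int\!|\nabla v|^2\,d(\nu\otimes m_x).$$
The first term is exactly the quadratic part of the claimed derivative. To control the remainder I would combine the pointwise bound $|v(y)-v(x)|^2\le 2(|v(x)|^2+|v(y)|^2)$ with the reversibility identity $dm_x(y)d\nu(x)=dm_y(x)d\nu(y)$, which allows swapping $x$ and $y$ in the $|v(y)|^2$ contribution, together with $m_x(\Omega_m)\le 1$. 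This yields $\int_{\Omega_m\times\Omega_m}|\nabla v|^2\,d(\nu\otimes m_x)\le 4\|v\|_{L^2(\Omega_m,\nu)}^2$, so the quadratic remainder is $O(\|v\|_{L^2}^2)$.

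For the potential part, I would use the Taylor-type identity $G(x,u+v)-G(x,u) = \g(x,u)\,v + \int_0^1[\g(x,u+sv)-\g(x,u)]\,v\,ds$, which isolates the desired linear contribution $\int_\Omega \g(x,u)\,v\,d\nu$. Applying Cauchy--Schwarz to the remainder reduces matters to showing
$$\int_\Omega\!\left(\int_0^1\!|\g(x,u+sv)-\g(x,u)|\,ds\right)^{\!2}\!d\nu(x)\longrightarrow 0\qquad\text{as }\|v\|_{L^2}\to 0.$$
This is the only nontrivial step; I would establish it by the standard subsequence-and-dominated-convergence argument: given any sequence $v_n\to 0$ in $L^2(\Omega_m,\nu)$, extract a subsequence $v_{n_k}\to 0$ $\nu$-a.e.; continuity of $t\mapsto \g(x,t)$ forces the integrand to tend to $0$ pointwise, and hypothesis \eqref{lqn01} dominates it by $(2g_0(x))^2\in L^1(\Omega,\nu)$, so dominated convergence gives the required vanishing.

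The main (and essentially only) obstacle is this last dominated-convergence step, which crucially uses both the uniform bound \eqref{lqn01} and the continuity of $\g$ in its second variable; the rest is a routine expansion.
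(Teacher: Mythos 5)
Your proposal is correct, and since the paper omits the proof of this lemma (labelling it ``easy to prove''), your argument is exactly the intended one: the quadratic term contributes its exact first-order expansion with an $O(\|v\|_{L^2}^2)$ remainder controlled via $|\nabla v(x,y)|^2\le 2(|v(x)|^2+|v(y)|^2)$ and reversibility, while the potential term is handled by the Taylor identity for $G$ together with the sub-subsequence/dominated-convergence argument, which is precisely where the uniform bound \eqref{lqn01} and the continuity of $t\mapsto\g(x,t)$ are needed. The only point worth making explicit is that the candidate derivative is a bounded linear functional on $L_0^2(\Omega_m,\nu)$, but this follows from the same estimates (Cauchy--Schwarz plus $g_0\in L^2(\Omega,\nu)$) that you already use.
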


By the integration by parts formula, we have the following result.

\begin{lemma}\label{EnerSol}  Assume~\eqref{lqn01}. We have that
$u \in L_0^2(\Omega_m, \nu)$ is a  solution of problem $(P(\g))$ if and only if $u$ is a critical point of $\mathcal{E}_{\g}$,  that is $ D\mathcal{E}_{\g}(u)=0$.
\end{lemma}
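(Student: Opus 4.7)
The plan is to use the nonlocal integration by parts formula already stated in the excerpt to identify the condition $\langle D\mathcal{E}_{\g}(u),v\rangle=0$ with the weak form of $(P(\g))$, and then pass between weak and pointwise formulations by a standard fundamental-lemma argument. Assumption \eqref{lqn01} guarantees that $\g(\cdot,u(\cdot))\in L^2(\Omega,\nu)$, and since $\Delta_m$ maps $L^2(\Omega_m,\nu)$ into itself (via $\Delta_m=M_m-I$ together with Jensen/Cauchy--Schwarz), every integral appearing below is well defined.

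For the forward implication, suppose $u\in L_0^2(\Omega_m,\nu)$ solves $(P(\g))$. Pick any test function $v\in L_0^2(\Omega_m,\nu)$. Because $v\equiv 0$ $\nu$-a.e. on $\partial_m\Omega$, the integrals of $\Delta_m u\,v$ over $\Omega$ and over $\Omega_m$ agree, so the integration by parts formula gives
$$
-\int_\Omega \Delta_m u(x)\,v(x)\,d\nu(x)=\frac12\int_{\Omega_m\times\Omega_m}\nabla u(x,y)\nabla v(x,y)\,d(\nu\otimes m_x)(x,y).
$$
Substituting $-\Delta_m u(x)=\g(x,u(x))$ on $\Omega$ and comparing with the expression for $D\mathcal{E}_{\g}(u)$ provided by the previous lemma yields $\langle D\mathcal{E}_{\g}(u),v\rangle=0$ for every such $v$, i.e.\ $u$ is a critical point.

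For the converse, assume $\langle D\mathcal{E}_{\g}(u),v\rangle=0$ for all $v\in L_0^2(\Omega_m,\nu)$. Running the integration by parts in the opposite direction (again using $v=0$ on $\partial_m\Omega$) rewrites this condition as
$$
\int_\Omega\bigl(-\Delta_m u(x)-\g(x,u(x))\bigr)\,v(x)\,d\nu(x)=0\qquad\forall\,v\in L_0^2(\Omega_m,\nu).
$$
For any $A\in\mathcal{B}_\Omega$, the function $\1_A$ (extended by $0$ on $\partial_m\Omega$) belongs to $L_0^2(\Omega_m,\nu)$, so taking $v=\1_A$ and letting $A$ range over $\mathcal{B}_\Omega$ forces $-\Delta_m u(x)=\g(x,u(x))$ for $\nu$-a.e.\ $x\in\Omega$. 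The boundary condition $u=0$ on $\partial_m\Omega$ is automatic from $u\in L_0^2(\Omega_m,\nu)$, so $u$ is a solution of $(P(\g))$.

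There is no real obstacle here beyond bookkeeping: the argument is routine once the Fr\'echet derivative from the previous lemma and the nonlocal integration by parts formula are combined, and the growth hypothesis \eqref{lqn01} is used only to ensure the integrals involving $\g(x,u(x))$ make sense and that the fundamental-lemma step can be localised on arbitrary measurable subsets of $\Omega$.
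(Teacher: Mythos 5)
Your proof is correct and follows exactly the route the paper intends: the paper dispatches this lemma with the single remark ``by the integration by parts formula,'' and your argument is simply the fully written-out version of that, combining the Fr\'echet-derivative formula with nonlocal integration by parts and localising with indicator functions $\1_A$, $A\in\mathcal{B}_\Omega$, for the converse.
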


Like for the local case, the following  Maximum Principle  will play an important role.

\begin{lemma}  Given $u \in L^2(\Omega_m, \nu)$, if $u$ satisfies
$$
 \left\{\begin{array}{ll} - \Delta_m u  \leq 0 \quad& x\in \Omega, \\[10pt] u(x) \leq 0 \quad&x \in \partial_m\Omega,  \end{array} \right.
$$
 then
 $$u(x) \leq 0 \quad  \nu-a.e. \ \  \ x \in \Omega_m.$$
\end{lemma}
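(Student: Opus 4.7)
The plan is to argue by contradiction, reducing matters to an $m$-connectedness dichotomy on $\Omega_m$. Rewriting $-\Delta_m u(x)\le 0$ gives the subaveraging property $u(x)\le \int_X u(y)\,dm_x(y)$ for $x\in\Omega$. Reversibility confines the measures $m_x$ to $\Omega_m$ for $\nu$-a.e.\ $x\in\Omega$: indeed $\int_\Omega m_x(X\setminus\Omega_m)\,d\nu(x) = \int_{X\setminus\Omega_m} m_y(\Omega)\,d\nu(y) = 0$ by the very definition of $\partial_m\Omega$. Hence the integral can be restricted to $\Omega_m$, where the boundary data for $u$ are available.

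Let $A := \{x\in\Omega_m : u(x) > 0\}$; the hypothesis on $\partial_m\Omega$ places $A\subset\Omega$ up to a $\nu$-null set. Assume for contradiction that $\nu(A) > 0$. For $\nu$-a.e.\ $x\in A$ I would split the subaveraging inequality over $\Omega_m = A \cup (\Omega_m\setminus A)$ and discard the nonpositive contribution from $\Omega_m\setminus A$, obtaining $u(x)\le\int_A u(y)\,dm_x(y)$. Integrating against $d\nu$ over $A$ and swapping the order via $dm_x(y)\,d\nu(x) = dm_y(x)\,d\nu(y)$ produces $\int_A u(x)\bigl(1 - m_x(A)\bigr)\,d\nu(x) \le 0$. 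Since $u > 0$ and $1-m_x(A)\ge 0$ on $A$, both factors must vanish pointwise $\nu$-a.e., so $m_x(A)=1$ and consequently $L_m(A,\Omega_m\setminus A) = 0$.

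I would then invoke the $m$-connectedness of $\Omega_m$ applied to the partition $\Omega_m = A \cup (\Omega_m\setminus A)$: since their $m$-interaction vanishes, one of the two pieces must be $\nu$-null. The standing assumption $\nu(\Omega)<\nu(\Omega_m)$ forces $\nu(\Omega_m\setminus A) \ge \nu(\partial_m\Omega) > 0$, so necessarily $\nu(A) = 0$, contradicting our assumption. The only delicate point I foresee is the bookkeeping that justifies concentrating $m_x$ on $\Omega_m$ for $\nu$-a.e.\ $x\in\Omega$ and converting everything to an interaction inside $\Omega_m$; once reversibility is used to symmetrize the double integral, the rest is the familiar "subaveraging forces invariance" principle for $m$-connected spaces, applied in the contrapositive form provided by the proposition characterizing $m$-connectedness via $L_m$.
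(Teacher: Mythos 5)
Your argument is correct, but it is a genuinely different proof from the one in the paper. The paper's proof is variational: it multiplies the inequality by $u^+$, integrates by parts using reversibility, applies the pointwise inequality $(u(y)-u(x))(u^+(y)-u^+(x))\ge (u^+(y)-u^+(x))^2$, and then concludes $u^+=0$ from the standing $2$-Poincar\'e inequality. You instead run the ``subaveraging plus irreducibility'' argument: after correctly observing that reversibility and the definition of $\partial_m\Omega$ force $m_x(\Omega_m)=1$ for $\nu$-a.e.\ $x\in\Omega$, you show that on the positivity set $A$ one has $m_x(A)=1$ a.e., hence $L_m(A,\Omega_m\setminus A)=0$, and the $m$-connectedness of $\Omega_m$ (which the paper assumes follows from that of $\Omega$) together with $\nu(\partial_m\Omega)=\nu(\Omega_m)-\nu(\Omega)>0$ kills $A$. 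The trade-off: your proof does not use the Poincar\'e inequality at all, only $m$-connectedness and $\nu(\Omega)<\nu(\Omega_m)$, so it establishes the maximum principle under formally weaker hypotheses; the paper's proof is shorter and reuses machinery (integration by parts, Poincar\'e) already set up for the existence theory. One phrasing quibble: from $\int_A u(x)(1-m_x(A))\,d\nu(x)\le 0$ with a nonnegative integrand you should say the \emph{product} vanishes a.e., and since $u>0$ on $A$ by definition it is the factor $1-m_x(A)$ that must be zero --- ``both factors must vanish'' is not what happens, though the conclusion you draw is the right one.
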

\begin{proof} Multiplying by $u^+$ and integrating in $\Omega$,  having in mind that $u^+ = 0$ $\nu$-a.e. in $\partial_m \Omega$, and integrating by parts, we get
$$0 \geq - \int_{\Omega_m}  \int_{\Omega_m} (u(y) - u(x)) u^+(x) \, dm_x(y) \, d \nu(x) $$ $$= \frac12 \int_{\Omega_m}  \int_{\Omega_m} (u(y) - u(x))( u^+(y) - u^+(x))\, dm_x(y) \, d \nu(x)$$ $$\ge \frac12 \int_{\Omega_m}  \int_{\Omega_m} (u^+(y) - u^+(x))^2\, dm_x(y) \, d \nu(x).$$
Then, since $u^+ \in L^2_0(\Omega_m, \nu)$, applying Poincar\'{e}'s inequality, we get $u^+ =0$ $\nu$-a.e. in $\Omega$, and the proof finishes.
\end{proof}

\begin{remark} \rm Note that the Maximum Principle says that if $\underline{u}$ is a subsolution of problem $(P(0))$, then $\underline{u} \leq 0$. Obviously, we also have that  if $\overline{u}$ is a supersolution of problem $(P(0))$, then $\overline{u} \geq 0$.\hfill$\blacksquare$
\end{remark}

Given $\varphi \in L^2(\Omega, \nu)$,  consider the problem $(P(\varphi))$:
\begin{equation}\label{Dirichlet1}
\left\{\begin{array}{ll}
-\Delta_m u  =\varphi&\hbox{in } \Omega,\\[10pt]
u(x)=0&\hbox{on }\partial_m\Omega.
\end{array}\right.
\end{equation}
Existence and  uniqueness for Problem~\eqref{Dirichlet1} are  given in \cite{ST0} (see also~\cite{MSTBook}). Nevertheless, and for the sake of completeness, we give  the next result with a  different proof.
\begin{theorem}\label{EUp}
Given $\varphi \in L^2(\Omega, \nu)$,
there is a unique solution $u$ of Problem~\eqref{Dirichlet1}
Moreover, $u$ is the only minimizer of the variational problem
$$
 \min_{h \in  L_0^2(\Omega_m, \nu) \setminus \{ 0 \}} \mathcal{E}_{\varphi}(h).
$$
\end{theorem}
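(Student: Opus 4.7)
The plan is to apply the direct method of the calculus of variations to $\mathcal{E}_\varphi$ on the closed (hence weakly sequentially closed) subspace $L_0^2(\Omega_m,\nu)$, and then identify the unique minimizer with the unique solution of \eqref{Dirichlet1}. Note that the hypothesis \eqref{lqn01} is satisfied with $g_0=|\varphi|$, so the Fréchet derivative of $\mathcal{E}_\varphi$ is the one computed in the preceding lemma, and by Lemma \ref{EnerSol} critical points of $\mathcal{E}_\varphi$ coincide with solutions of \eqref{Dirichlet1}.

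First I would establish coercivity and strict convexity of $\mathcal{E}_\varphi$. By the assumed $2$-Poincaré inequality \eqref{Realyq},
$$\frac{1}{4}\int_{\Omega_m\times\Omega_m}|\nabla u(x,y)|^2\, d(\nu\otimes m_x)(x,y) \;\geq\; \frac{\lambda_m(\Omega)}{2}\,\|u\|_{L^2(\Omega,\nu)}^{2},$$
so Cauchy--Schwarz gives
$$\mathcal{E}_\varphi(u)\;\geq\;\frac{\lambda_m(\Omega)}{2}\|u\|_{L^2(\Omega,\nu)}^{2} - \|\varphi\|_{L^2(\Omega,\nu)}\|u\|_{L^2(\Omega,\nu)},$$
which is coercive in the $L^2(\Omega,\nu)$-norm; since elements of $L_0^2(\Omega_m,\nu)$ vanish on $\partial_m\Omega$, it is coercive in the full $L^2(\Omega_m,\nu)$-norm as well. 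Strict convexity is automatic: the quadratic Dirichlet form is convex and degenerates only on functions constant $\nu$-a.e.\ on $\Omega_m$, which the vanishing boundary condition and Poincaré force to be $0$; the linear term $-\int_\Omega \varphi u\,d\nu$ preserves strict convexity.

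Next I would take a minimizing sequence $\{u_n\}\subset L_0^2(\Omega_m,\nu)$. Coercivity gives a uniform bound on $\|u_n\|_{L^2(\Omega_m,\nu)}$, and using $\mathcal{E}_\varphi(u_n)\leq C$ together with the Cauchy--Schwarz bound on the linear term, a uniform bound on the Dirichlet form. Extracting a weakly convergent subsequence $u_n\rightharpoonup u^\star$ in $L^2(\Omega_m,\nu)$, the weak closedness of $L_0^2(\Omega_m,\nu)$ puts $u^\star$ in this space. The quadratic part of $\mathcal{E}_\varphi$ is a continuous convex quadratic form, hence weakly lower semicontinuous, and the linear term is weakly continuous; therefore $\mathcal{E}_\varphi(u^\star)\leq\liminf_n\mathcal{E}_\varphi(u_n)=\inf \mathcal{E}_\varphi$. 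Strict convexity upgrades this to uniqueness of the minimizer.

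To finish, by Lemma \ref{EnerSol} the minimizer $u^\star$ satisfies $D\mathcal{E}_\varphi(u^\star)=0$; testing with arbitrary $v\in L_0^2(\Omega_m,\nu)$ and applying the integration by parts formula yields $\int_\Omega(-\Delta_m u^\star-\varphi)v\,d\nu=0$, whence $-\Delta_m u^\star=\varphi$ $\nu$-a.e.\ in $\Omega$. Uniqueness of the solution follows from strict convexity (every solution is a critical point and hence the unique minimizer), or equivalently by subtracting two solutions, testing against the difference $w$, and invoking Poincaré to conclude $w\equiv 0$. The only delicate point is carefully separating the ambient space $\Omega_m$ from the integration domain $\Omega$ in the Poincaré estimate, which works precisely because the Dirichlet condition kills any contribution from $\partial_m\Omega$ in the linear term.
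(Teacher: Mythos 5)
Your proposal is correct and follows essentially the same route as the paper: the direct method applied to $\mathcal{E}_\varphi$ on $L_0^2(\Omega_m,\nu)$, with the $2$-Poincar\'e inequality providing the $L^2$ bound on the minimizing sequence, weak lower semicontinuity yielding a minimizer, Lemma \ref{EnerSol} identifying it as a solution, and strict convexity giving uniqueness. The only (immaterial) difference is that you absorb the linear term via Cauchy--Schwarz to get an explicit coercive lower bound, whereas the paper uses Young's inequality together with $\lambda_m(\Omega)<1$.
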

\begin{proof}    First note that $\mathcal{E}_{\varphi}$  is convex and lower semicontinuous in $L^2(\Omega,\nu)$, thus weakly lower semicontinuous (see~\cite[Corollary 3.9]{BrezisAF}).
Set $$\theta := \inf_{h \in L_0^2(\Omega_m, \nu) \setminus \{ 0 \}} \mathcal{E}_{\varphi}(h),$$ and let $\{ u_n \}$ be a minimizing sequence, hence
$$\theta = \lim_{n \to \infty} \mathcal{E}_{\varphi}(u_n).$$  Set $$  K:= \sup_{n \in \N}\mathcal{E}_{\varphi}(u_n)  < + \infty\,.$$
Since $\Omega_m$ satisfies a $2$-Poincar\'{e} type inequality, by Young's inequality, we have
\begin{equation}\label{lqsr00}
\begin{array}{c}\displaystyle \lambda_{m}(\Omega)\int_\Omega \left|u_n(x) \right|^2 \, {d\nu(x)} \leq  \frac{1}{2}\int_{\Omega_m \times \Omega_m} \vert \nabla u_n(x,y) \vert^2 d(\nu\otimes m_x)(x,y)
\\ \\
\displaystyle=  2 \mathcal{E}_{\varphi}(u_n) + 2 \int_{\Omega} \varphi(x) u_n(x) d \nu(x)
\\\displaystyle \leq   2K + \lambda_{m}(\Omega)^2 \int_\Omega \left|u_n(x) \right|^2 \, {d\nu(x)} +  \frac{1}{\lambda_{m}(\Omega)^2} \int_\Omega \vert \varphi(x) \vert^2 d\nu(x).
\end{array}
\end{equation}
Therefore,  since $\lambda_{m}(\Omega)<1$ (see \eqref{ok}),  we obtain that
\begin{equation}\label{lqsr02}\int_{\Omega} |u_n(x)|^2 \, {d\nu(x)} \leq C \quad \forall n \in \N.
\end{equation}
Hence, up to a subsequence, we have
$$u_n \rightharpoonup u \ \ \ \hbox{in }   L_0^2(\Omega_m, \nu).$$
Furthermore, using the weak lower semicontinuity of the functional $\mathcal{E}_{g}$, we get
$$\mathcal{E}_{\varphi}(u) = \inf_{h \in L_0^2(\Omega_m, \nu) \setminus \{ 0 \}} \mathcal{E}_{\varphi}(h).$$
  Then,   $u$ is a minimizer of $\mathcal{E}_{\varphi}$ and hence   a critical point. Therefore,  by Lemma \ref{EnerSol}, $u$ is a   solution of problem~\eqref{Dirichlet1}.
  Since the functional $\mathcal{E}_{g}$ is strictly convex, we have that $u$ is its unique critical point, and uniqueness follows.
\end{proof}

 The following result is a nonlocal variant of ~\cite[Lemma 1.1.1]{D}.

  \begin{theorem}\label{Truncat} Let $g \in C^1(\R)$ be and assume that $\underline{u}$ and $\overline{u}$ are   $L^\infty$-bounded subsolution and supersolution, respectively, of Problem $(P(g))$   with and $\underline{u} \leq \overline{u}$. Then, if
$$\h(x,r):= \left\{ \begin{array}{lll} g(\underline{u}(x)) \quad &\hbox{if} \ \ r < \underline{u}(x), \\[10pt] g(r) \quad &\hbox{if} \ \ \underline{u}(x) \leq r \leq \overline{u}(x) \\[10pt]  g(\overline{u}(x)) \quad &\hbox{if} \ \ r >\underline{u}(x), \end{array} \right.$$
then there is a  solution $u$ of problem  $(P(\h))$.
Moreover, if $g$ is non-decreasing, we have $ \underline{u}\leq u \leq \overline{u} $, and then $u$ is solution of $(P(g))$.
\end{theorem}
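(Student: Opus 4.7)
The plan is to produce a solution of $(P(\h))$ by a monotone iteration scheme, using a large shift to render the (possibly non-monotone) truncated nonlinearity monotone. Since $\underline{u}, \overline{u} \in L^\infty$, their essential ranges sit inside a bounded interval $J \subset \R$; as $g \in C^1(\R)$, the constant $M := \|g'\|_{L^\infty(J)}$ is finite, so fixing any $\beta > M$ makes the map $r \mapsto \h(x, r) + \beta r$ non-decreasing for every $x$, while $\h$ itself is globally bounded.

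Set $u_0 := \underline{u}$ and, for $n \ge 0$, define $u_{n+1} \in L^2_0(\Omega_m, \nu)$ as the unique solution of the shifted linear Dirichlet problem
$$-\Delta_m u_{n+1} + \beta u_{n+1} = \h(\cdot, u_n) + \beta u_n \text{ in } \Omega, \quad u_{n+1} = 0 \text{ on } \partial_m\Omega,$$
obtained by minimising on $L^2_0(\Omega_m, \nu)$ the strictly convex, coercive functional $\frac14 \int_{\Omega_m}\int_{\Omega_m} |\nabla u|^2 \, d(\nu\otimes m_x) + \frac{\beta}{2}\|u\|_{L^2(\Omega,\nu)}^2 - \int_\Omega (\h(\cdot,u_n)+\beta u_n)\, u\, d\nu$. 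This is a direct adaptation of Theorem \ref{EUp}; coercivity is in fact easier here because the $L^2$ penalty is present even before invoking Poincar\'e.

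I next establish $\underline{u} = u_0 \le u_1 \le u_2 \le \cdots \le \overline{u}$ using a maximum principle for the shifted operator, proved exactly as the paper's Maximum Principle, the additional non-negative term $\beta \int_\Omega (w^+)^2 \, d\nu$ appearing on the left-hand side (this actually removes the need for Poincar\'e in the final step). The base inequality $\underline{u} \le u_1$ follows from the subsolution property $-\Delta_m \underline{u} \le g(\underline{u}) = \h(\cdot, \underline{u})$, which yields $(-\Delta_m + \beta)(\underline{u} - u_1) \le 0$ in $\Omega$ together with $\underline{u} - u_1 \le 0$ on $\partial_m\Omega$ (since $\underline u \le 0$ there). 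The inductive step $u_n \le u_{n+1}$ follows by subtracting consecutive equations and invoking monotonicity of $\h(\cdot, r) + \beta r$ applied to $u_{n-1} \le u_n$, while $u_{n+1} \le \overline{u}$ uses the supersolution inequality together with the identity $\h(\cdot, \overline{u}) = g(\overline{u})$.

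Since $\{u_n\}$ is monotone and pointwise sandwiched between the $L^\infty$ functions $\underline{u}$ and $\overline{u}$, bounded convergence gives $u_n \to u$ in $L^2(\Omega_m, \nu)$ for some $u$ with $\underline{u} \le u \le \overline{u}$. The continuity of $r \mapsto \h(x, r)$ together with its uniform boundedness upgrades this to $\h(\cdot, u_n) \to \h(\cdot, u)$ in $L^2$, and the $L^2$-continuity of $\Delta_m$ (which is bounded since the $m_x$ are probability measures) lets me pass to the limit to conclude $-\Delta_m u = \h(\cdot, u)$ in $\Omega$ with $u = 0$ on $\partial_m\Omega$. When $g$ is non-decreasing, the inclusion $u \in [\underline{u}, \overline{u}]$ forces $\h(x, u(x)) = g(u(x))$ $\nu$-a.e., so $u$ also solves $(P(g))$. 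The most delicate point is the base step of the iteration, where $u_0 = \underline u$ lies outside $L^2_0(\Omega_m, \nu)$; this is not an obstruction because the test function $(\underline u - u_1)^+$ used in the maximum principle does belong to $L^2_0(\Omega_m, \nu)$ thanks to $\underline u \le 0 = u_1$ on $\partial_m \Omega$, and all subsequent iterates live in $L^2_0(\Omega_m,\nu)$ automatically.
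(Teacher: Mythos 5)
Your proof is correct, but it follows a genuinely different route from the paper's. The paper obtains the solution of $(P(\h))$ by the direct method: it minimises the truncated energy functional $\mathcal{E}_{\h}$ exactly as in Theorem \ref{EUp}, the only new point being the coercivity estimate, which follows from the global bound $|\h|\le \Vert g\Vert_{L^\infty[a,b]}$ together with the $2$-Poincar\'e inequality and Young's inequality; only \emph{afterwards}, and only under the hypothesis that $g$ is non-decreasing, does it invoke the Maximum Principle on $v=u-\overline{u}$ (using $\h(x,u(x))\le g(\overline{u}(x))$, which needs monotonicity of $g$) to locate $u$ in $[\underline{u},\overline{u}]$. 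You instead run a monotone iteration on the shifted problem $-\Delta_m u_{n+1}+\beta u_{n+1}=\h(\cdot,u_n)+\beta u_n$ with $\beta>\Vert g'\Vert_{L^\infty(J)}$, so that $r\mapsto \h(x,r)+\beta r$ is non-decreasing; this is the classical Amann sub/supersolution scheme. Your route buys several things: the ordering $\underline{u}\le u_n\le\overline{u}$ is produced by the construction itself, so the identity $\h(\cdot,u)=g(u)$ and hence the conclusion that $u$ solves $(P(g))$ hold \emph{without} assuming $g$ non-decreasing (a strictly stronger statement than the theorem); the $\beta$-term makes both the coercivity of the auxiliary functionals and the comparison principle independent of the Poincar\'e inequality; and you avoid having to argue weak lower semicontinuity for the non-convex functional $\mathcal{E}_{\h}$. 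The paper's route is shorter given that the machinery of Theorem \ref{EUp} is already set up, and it delivers the solution as an energy minimiser. Your handling of the technical points (the base step with $\underline{u}\notin L^2_0(\Omega_m,\nu)$ but $(\underline{u}-u_1)^+\in L^2_0(\Omega_m,\nu)$, the $L^2$-boundedness of $\Delta_m$ via Jensen and the invariance of $\nu$, and the passage to the limit by dominated convergence) is sound.
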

\begin{proof}  The proof is similar to that of Theorem~\ref{EUp}. The main fact is to get~\eqref{lqsr00} and~\eqref{lqsr02}  which follow in this way: Since $\Omega_m$ satisfies a $2$-Poincar\'{e} type inequality, by  Young's inequality, given $\epsilon >0$,  we have
$$\displaystyle \lambda_{m}(\Omega)\int_\Omega \left|u_n(x) \right|^2 \, {d\nu(x)} \leq  \frac{1}{2}\int_{\Omega_m \times \Omega_m} \vert \nabla u_n(x,y) \vert^2 d(\nu\otimes m_x)(x,y)
$$
$$\leq 2 \mathcal{E}_{h}(u_n) + 2 \left\vert \int_{\Omega} H(x,u_n(x)) d \nu(x)\right\vert \leq  2K +  2 \Vert g \Vert_{L^\infty[a,b]} \int_\Omega \vert u_n(x) \vert \, d\nu(x)$$ $$\leq  2K +2 \Vert g \Vert_{L^\infty[a,b]} \left( \epsilon \int_\Omega \left|u_n(x) \right|^2 \, {d\nu(x)} + \frac{1}{\epsilon} \nu(\Omega)^2 \right).$$
Thus, taking $\epsilon$ small enough, we get
$$\int_{\Omega} |u_n(x)|^2 \, {d\nu(x)} \leq C \quad \forall n \in \N.$$

 Suppose now that $g$ is non-decreasing. Then, if $v:= u- \overline{u}$, we have
 $$- \Delta_m v = \h(x,u(x)) - g(\overline{u}(x))  \leq 0 \quad \hbox{for} \ x \in \Omega$$
 and
 $$v(x)  \leq 0 \quad \quad \hbox{for} \ x \in \partial_m\Omega.$$
 Then, by the Maximum Principle, we have $v\le 0$, so $u\le \overline{u}$. Similarly, one can show that $\underline{u}\le u $.  Hence $\h(x,u(x))=g(u(x))$ and $u$ is solution of $(P(g))$.
\end{proof}

\subsection{Gelfand-type problems}

 For $\lambda \geq 0$, we will consider the problem
$$
(P(\lambda f)) \qquad\qquad\left\{\begin{array}{ll} - \Delta_m u = \lambda f(u) \quad&\hbox{in} \ \Omega, \\[10pt] u =0 \quad&\hbox{on} \ \partial_m\Omega,  \end{array} \right.
$$
 where   $f:[0,+\infty)\to \mathbb{R}$ satisfies
 \begin{equation}\label{H}\left\{\begin{array}{l}
(a)\quad f\  \hbox{ is non-decreasing},\\[6pt]
(b)\quad  f(0)>0,\\[6pt]
(c)\quad  \exists \, c_1>0:
f(s) \ge c_1\, s,\ \forall s \geq 0.
\end{array}\right.
\end{equation}
A paradigmatic example is  $f(s)=e^s$ for which
$$e^s\ge es, \quad\forall s>0,$$
being $e$ optimal.

 \begin{definition} We say that $u_\lambda$ is a minimal solution to problem~$(P(\lambda f))$ if $u_\lambda $ is solution and $u_\lambda \leq v$ for any other   solution $v$ to~$(P(\lambda f))$.
 \end{definition}

 \begin{theorem}\label{existence}
 Assume that $f$ satisfies hypotheses~\eqref{H}.
There exists a positive number (called the extremal parameter) $\lambda^*\le \frac{\lambda_{m}(\Omega)}{c_1}$, with $c_1$ satisfying~\eqref{H}-$(c)$,  such that:
\item  {\rm (a)}  If $\lambda > \lambda^*$, the problem~$(P(\lambda f))$ admits no solution.
\item {\rm (b)} If $0\leq \lambda < \lambda^*$, the problem~$(P(\lambda f))$ admits a minimal bounded solution $u_\lambda$.
\item  {\rm (c)} If $0\le \mu<\lambda<\lambda^*$, then $u_{\mu}<u_\lambda$.
\end{theorem}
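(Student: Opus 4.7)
I follow the classical Gelfand argument, using the Maximum Principle, the existence/uniqueness theorem for the linear Dirichlet problem (Theorem~\ref{EUp}), the sub/supersolution existence theorem (Theorem~\ref{Truncat}), and the first Dirichlet eigenfunction $\varphi_{m}$ of assumption~\eqref{condH} both as supersolution and as test function. Set $\lambda^{*}:=\sup\Lambda_{b}$, where $\Lambda_{b}:=\{\lambda\ge 0:(P(\lambda f))\text{ has a bounded solution}\}$. The key preliminary is that $\Lambda_{b}$ is an interval: if $\mu\in\Lambda_{b}$ with bounded solution $u_{\mu}$ (necessarily $\ge 0$ by the Maximum Principle applied to $-u_{\mu}$, since $\lambda f\ge \lambda f(0)>0$), then for every $0\le\lambda<\mu$ the function $u_{\mu}$ is a bounded supersolution of $(P(\lambda f))$ (from $-\Delta_{m}u_{\mu}=\mu f(u_{\mu})\ge\lambda f(u_{\mu})$) while $\underline{u}\equiv 0$ is a bounded subsolution; Theorem~\ref{Truncat} then produces a bounded $u_{\lambda}$ with $0\le u_{\lambda}\le u_{\mu}$.

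\textbf{Bounds on $\lambda^{*}$ and part (a).} For the lower bound, setting $\lambda_{0}:=\lambda_{m}(\Omega)\alpha_{\Omega}/f(M_{\Omega})>0$,
\[
-\Delta_{m}\varphi_{m}=\lambda_{m}(\Omega)\varphi_{m}\ge\lambda_{m}(\Omega)\alpha_{\Omega}\ge\lambda f(M_{\Omega})\ge\lambda f(\varphi_{m})\qquad(\lambda\le\lambda_{0}),
\]
so $\varphi_{m}$ itself is a bounded supersolution and Theorem~\ref{Truncat} yields $[0,\lambda_{0}]\subset\Lambda_{b}$, whence $\lambda^{*}\ge\lambda_{0}>0$. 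For the upper bound, testing the equation against $\varphi_{m}$ via the integration-by-parts formula and~\eqref{eigenproblem} gives
\[
\lambda_{m}(\Omega)\!\int_{\Omega}u\,\varphi_{m}\,d\nu=\lambda\!\int_{\Omega}f(u)\,\varphi_{m}\,d\nu\ge\lambda c_{1}\!\int_{\Omega}u\,\varphi_{m}\,d\nu.
\]
Since $u\not\equiv 0$ when $\lambda>0$ (else $0=\lambda f(0)>0$) and $\varphi_{m}\ge\alpha_{\Omega}>0$, the common integral is strictly positive, forcing $\lambda\le\lambda_{m}(\Omega)/c_{1}$. Hence $\lambda^{*}\le\lambda_{m}(\Omega)/c_{1}<\infty$, and the definition of supremum immediately gives~(a).

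\textbf{Parts (b) and (c).} Fix $0\le\lambda<\lambda^{*}$ and pick $\mu\in(\lambda,\lambda^{*})\cap\Lambda_{b}$ with bounded solution $u_{\mu}$. Run the Picard iteration $u_{0}\equiv 0$ and let $u_{n+1}\in L_{0}^{2}(\Omega_{m},\nu)$ be the unique solution of $-\Delta_{m}u_{n+1}=\lambda f(u_{n})$ with zero $m$-boundary data (Theorem~\ref{EUp}). Induction with the Maximum Principle and the monotonicity of $f$ yields $0\le u_{n}\le u_{n+1}\le u_{\mu}$, while the same induction with an arbitrary bounded solution $v$ in place of $u_{\mu}$ gives $u_{n}\le v$. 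Monotone convergence together with continuity of $f$ let one pass to the limit pointwise, producing the minimal bounded solution $u_{\lambda}$, proving~(b). For~(c), when $0\le\mu<\lambda<\lambda^{*}$ the function $u_{\lambda}$ is a bounded supersolution of $(P(\mu f))$, so minimality of $u_{\mu}$ gives $u_{\mu}\le u_{\lambda}$; writing $v:=u_{\lambda}-u_{\mu}\ge 0$,
\[
-\Delta_{m}v=\lambda\bigl(f(u_{\lambda})-f(u_{\mu})\bigr)+(\lambda-\mu)f(u_{\mu})\ge(\lambda-\mu)f(u_{\mu})>0\quad\text{in }\Omega,
\]
and the pointwise identity $v(x)=\int_{\Omega_{m}}v(y)\,dm_{x}(y)+(-\Delta_{m}v)(x)$, combined with $v\ge 0$ and $f(u_{\mu})>0$, forces $v(x)>0$ at every $x\in\Omega$.

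\textbf{Main obstacle.} The whole argument hinges on~\eqref{condH}: the two-sided bound $\alpha_{\Omega}\le\varphi_{m}\le M_{\Omega}$ is precisely what simultaneously supplies the explicit bounded supersolution used to establish $\lambda^{*}>0$ and the quantitative test-function estimate used to establish $\lambda^{*}<\infty$. Without it, one would need an independent proof that the torsion function $w$ solving $-\Delta_{m}w=1$ is in $L^{\infty}$, which is not automatic in general random walk spaces. The Picard construction of $u_{\lambda}$ is also delicate in that it relies on already knowing $\Lambda_{b}$ is an interval, so that some bounded $u_{\mu}$ with $\mu>\lambda$ is available to dominate the iterates.
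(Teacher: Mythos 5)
Your proposal is correct and takes essentially the same route as the paper: the upper bound $\lambda^*\le \lambda_{m}(\Omega)/c_1$ by testing against $\varphi_{m}$, a bounded supersolution supplied by~\eqref{condH} to show $\lambda^*>0$, the interval structure of the set of admissible parameters, and a monotone Picard iteration dominated by a solution at a larger parameter to produce the minimal bounded solution. The only cosmetic differences are that you use $\varphi_{m}$ itself as the small-$\lambda$ supersolution (the paper first builds $\tilde u$ with $-\Delta_m \tilde u=k$ and squeezes it under $\varphi_{m}$), that your appeals to Theorem~\ref{Truncat} (stated for $C^1$ nonlinearities, which~\eqref{H} does not guarantee) could and should be replaced by the iteration you already describe, and that your explicit strict-positivity argument for part (c) is more detailed than the paper's.
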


\begin{proof} Observe first that for $\lambda=0$, $u=0$ is the unique solution of problem~$(P(\lambda f))$.  And if $\lambda>0$ and $u$ is a solution of problem~$(P(\lambda f))$ then $u$ is non-null.

 {\it Step 1}.
  If problem~$(P(\lambda f))$ has a solution $u$, then,  multiplying equation in~$(P(\lambda f))$  by $\varphi_{m}$, integrating in $\Omega$,  having in mind that $\varphi_{m} = 0$ $\nu$-a.e. in $\partial_m \Omega$, and integrating by parts, we get  (using~\eqref{H}-$(b)$)
$$\frac12 \int_{\Omega_m}  \int_{\Omega_m} (u(y) - u(x))( \varphi_{m}(y) - \varphi_{m}(x))\, dm_x(y) \, d \nu(x) = \int_\Omega \lambda f(u) \varphi_{m} d\nu \ge \lambda \,c_1  \int_\Omega u \varphi_{m} d\nu.$$
Similarly, multiplying by $u$ the equation on \eqref{eigenproblem}, we get
$$\frac12 \int_{\Omega_m}  \int_{\Omega_m} (u(y) - u(x))( \varphi_{m}(y) - \varphi_{m}(x))\, dm_x(y) \, d \nu(x) = \lambda_{m}(\Omega)\int_\Omega u \varphi_{m}d \nu.$$
Therefore, $0 < \lambda \le \frac{\lambda_{m}(\Omega)}{c_1}.$

 {\it Step 2}.  Let us see now  that, for $\lambda>0$ small enough, there exists a solution to problem~$(P(\lambda f))$ that is $L^\infty$-bounded by $M_\Omega$, where $M_\Omega$ is defined in \eqref{condH}.

 Fix $0<k\leq\min\{\alpha_\Omega\,\lambda_{m}(\Omega), \lambda_m(\Omega)/c_1\}$, being $\alpha_\Omega$ defined in \eqref{condH}.
By Theorem \ref{EUp}, we know that there exists a positive solution $\tilde{u}$ to problem
$$\left\{\begin{array}{ll}
\displaystyle- \Delta_m \tilde u =k,\quad &\hbox{in} \ \Omega,
\\ \\
\tilde u=0,\quad &\hbox{on} \  \partial_m\Omega.
\end{array}\right.
$$
Then,
$$- \Delta_m (\tilde{u} - \varphi_{m}) = k - \lambda_{m}(\Omega)\varphi_{m}  < k - \lambda_{m}(\Omega) \alpha_\Omega \leq 0 \quad \hbox{in } \Omega,$$
and
$$\tilde{u} - \varphi_{m} = 0 \quad \hbox{on} \ \partial_m\Omega.$$
Hence, applying the Maximum Principle two times, we have
$$ 0\le \tilde{u} \le \varphi_{m}   \quad \hbox{in} \ \Omega_m,$$
and therefore
\begin{equation}\label{cotalinf}0\le \tilde{u} \leq \varphi_{m} \leq M_\Omega \quad \hbox{in} \ \Omega_m.
\end{equation}

Now, we take
$$
0<\lambda<\frac{k}{f(M_\Omega)}.
$$
 Then, by~\eqref{H}-$(a)$,
$$
- \Delta_m \tilde{u}=k>\lambda\, f(M_\Omega)\geq \lambda \, f(\tilde{u}).
$$

  Again, by  Theorem \ref{EUp}, let $u_0$ be a solution to problem
$$\left\{\begin{array}{ll}
\displaystyle- \Delta_m u_0 =\lambda f(0),\quad &\hbox{in} \ \Omega,
\\ \\
u_0=0,\quad &\hbox{on} \  \partial_m\Omega.
\end{array}\right.
$$
Then, by using~\eqref{H}-$(a)$,
$$
- \Delta_m u_0 =  \lambda f(0)  \le \lambda f(\tilde u)\leq - \Delta_m \tilde{u}.
$$
Hence, by the Maximum Principle, we have
$$
 0\le u_0\leq \tilde{u} \leq M_\Omega
$$
Since $f(u_0) \in L^\infty(\Omega_m,\nu)$, by  Theorem \ref{EUp}, there exists $u_1$ solution to problem
$$\left\{\begin{array}{ll}
\displaystyle- \Delta_m u_1 =\lambda \, f(u_0),\quad &\hbox{in} \ \Omega,
\\ \\
u_1=0,\quad &\hbox{on} \  \partial_m\Omega.
\end{array}\right.
$$
Moreover,
$$
- \Delta_m u_1 =\lambda \, f(u_0) \leq \lambda \, f(\tilde{u})\leq- \Delta_m \tilde{u},
$$
Hence, by the Maximum Principle, we have
$$
0\le u_0\leq u_1\leq \tilde{u} \leq M_\Omega.
$$
Inductively, for $n \in \N$, we find $u_n$ a solution of the problem
$$\left\{\begin{array}{ll}
\displaystyle- \Delta_m u_n =\lambda \, f(u_{n-1}),\quad &\hbox{in} \ \Omega,
\\ \\
u_n=0,\quad &\hbox{on} \  \partial_m\Omega,
\end{array}\right.
$$
and $0\le u_n \leq M_\Omega$ for all $n \in \N$. Then, by the Dominate Convergence Theorem, there exists $u_\lambda= \lim_{n \to \infty} u_n \in L^\infty(\Omega_m, \nu)$ solution to problem~$(P(\lambda f))$.

  In this way, if we define the set
$$
\mathcal{E}:= \{ \lambda>0  :  \exists\, u  \hbox{ bounded solution of   $(P(\lambda f))$}\}
$$
   we have that $$\mathcal{E}\not= \emptyset.$$

{\it Step 3.} {\it $\mathcal{E}$ is an interval.}

    Fix $\mu \in \mathcal{E}\not= \emptyset$ and let $w$ be a bounded solution to problem $(P(\mu f))$. Then, for each $0<\lambda<\mu$,   consider, as before,  solutions to
$$\left\{\begin{array}{ll}
\displaystyle- \Delta_m u_0 =\lambda f(0),\quad &\hbox{in} \ \Omega,
\\ \\
u_0=0,\quad &\hbox{on} \  \partial_m\Omega,
\end{array}\right.
$$
and
$$\left\{\begin{array}{ll}
\displaystyle- \Delta_m u_n =\lambda \, f(u_{n-1}),\quad &\hbox{in} \ \Omega,
\\ \\
u_n=0,\quad &\hbox{on} \  \partial_m\Omega,
\end{array}\right.
$$
for $n\ge 1$.
Now we can argue that $$ 0\le u_n\le w\quad \forall n,$$
and get a solution   $u_\lambda= \lim_{n \to \infty} u_n \in L^\infty(\Omega_m, \nu)$  to problem $(P(\lambda f))$ as before.

Once we know that $\mathcal{E}$ is an interval, we define the extremal parameter
$$\lambda^*:= \sup \{ \lambda   :   \lambda \in \mathcal{E} \}.$$
 By {\it Step 1},
$$\lambda^* \leq \frac{\lambda_{m}(\Omega)}{c_1},$$
and we have that  $$\mathcal{E}\subset  [0,\lambda^*].$$

 The same arguments give that the solutions $u_{\lambda}$   are in fact  minimal; and, if $0\le \mu<\lambda\le\lambda^*$ then $u_{\mu}<u_\lambda$.

Hence we have proved (a), (b), (c).
\end{proof}

In   Example~\ref{elsobre01} we use the method used in the proof of the above theorem to characterize the minimal solutions.

  In the above theorem we have seen that under assumptions ~\eqref{H} we have $\lambda^*$ is bounded. Let us see now that if ~\eqref{H}-$(c)$ does not holds, then, as   in the local case, $\lambda^* = +\infty$.

\begin{proposition}\label{lambdainfty} Assume that $f$ satisfies hypotheses $(a)$ and $(b)$ of~\eqref{H} and
\begin{equation}\label{noc}
\hbox{for any $n\in \N$ there exists $s_n \geq 0$ such that $f(s_n) \leq \frac{s_n}{n}.$}
\end{equation}
Then, $\lambda^* = +\infty$.
\end{proposition}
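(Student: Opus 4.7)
The plan is to show that for every $\lambda>0$ the problem $(P(\lambda f))$ admits a bounded solution; this forces $(0,\infty)\subset\mathcal E$ and hence $\lambda^*=+\infty$. The strategy is the same as in Step 2 of the proof of Theorem~\ref{existence}: construct a bounded supersolution and run a monotone iteration against it. The novelty is that without hypothesis \eqref{H}-$(c)$ I must build this supersolution for arbitrarily large $\lambda$, and the tool that makes this possible is the sequence $\{s_n\}$ provided by \eqref{noc}.

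First I would fix $\lambda>0$ and, using \eqref{condH}, pick $n\in\N$ large enough that
$$\frac{\lambda M_\Omega}{n\,\lambda_{m}(\Omega)\,\alpha_\Omega}\le 1.$$
By \eqref{noc} there exists $s_n\ge 0$ with $f(s_n)\le s_n/n$; since $f(0)>0$, necessarily $s_n>0$. By Theorem~\ref{EUp}, let $\overline u$ be the unique solution of
$$-\Delta_m \overline u = \lambda f(s_n)\ \hbox{ in }\Omega,\qquad \overline u=0\ \hbox{ on }\partial_m\Omega.$$
Setting $w:=\frac{\lambda f(s_n)}{\lambda_{m}(\Omega)\,\alpha_\Omega}\,\varphi_{m}$, the eigenvalue identity together with $\varphi_{m}\ge \alpha_\Omega$ gives
$$-\Delta_m w=\frac{\lambda f(s_n)}{\alpha_\Omega}\varphi_{m}\ge \lambda f(s_n)=-\Delta_m \overline u,$$
while $w=\overline u=0$ on $\partial_m\Omega$. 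The Maximum Principle then yields $\overline u\le w\le \frac{\lambda f(s_n) M_\Omega}{\lambda_{m}(\Omega)\,\alpha_\Omega}$, so that
$$0\le \overline u\le \frac{\lambda f(s_n) M_\Omega}{\lambda_{m}(\Omega)\,\alpha_\Omega}\le \frac{\lambda s_n M_\Omega}{n\,\lambda_{m}(\Omega)\,\alpha_\Omega}\le s_n.$$
By monotonicity of $f$ this gives $-\Delta_m \overline u=\lambda f(s_n)\ge \lambda f(\overline u)$, i.e.\ $\overline u$ is a bounded non-negative supersolution of $(P(\lambda f))$.

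Finally, I would replicate the monotone iteration from Step~2 of the proof of Theorem~\ref{existence}: define $u_0$ as the solution of $-\Delta_m u_0=\lambda f(0)$ with zero $m$-boundary data, and inductively let $u_{k+1}$ solve $-\Delta_m u_{k+1}=\lambda f(u_k)$ with zero $m$-boundary data. Since $f$ is non-decreasing, the Maximum Principle yields
$$0\le u_0\le u_1\le\cdots\le u_k\le \overline u\le s_n$$
by induction, so the Dominated Convergence Theorem produces a pointwise limit $u_\lambda=\lim_k u_k\in L^\infty(\Omega_m,\nu)$ which is a bounded solution of $(P(\lambda f))$. Thus $\lambda\in\mathcal E$ for every $\lambda>0$, and $\lambda^*=+\infty$.

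The main obstacle is producing an $L^\infty$-bound on $\overline u$ that is compatible with $s_n$; this is where the two-sided control $\alpha_\Omega\le\varphi_{m}\le M_\Omega$ provided by \eqref{condH} is essential, and where \eqref{noc} is used in the sharp form $f(s_n)/s_n\le 1/n$. Without (c) of \eqref{H}, there is no positive lower linear control on $f$, so the obstruction from \emph{Step 1} of the proof of Theorem~\ref{existence} simply disappears.
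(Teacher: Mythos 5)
Your proof is correct and follows essentially the same route as the paper: both arguments use \eqref{noc} to manufacture a bounded supersolution sitting below the level $s_n$ (so that $-\Delta_m\overline u\ge\lambda f(s_n)\ge\lambda f(\overline u)$ by monotonicity of $f$) and then conclude by the sub/supersolution--monotone iteration scheme. The only difference is cosmetic: the paper takes $\overline u=s_n\phi/M$ with $\phi$ the solution of $-\Delta_m\phi=1$ and $M=\Vert\phi\Vert_\infty$, whereas you solve with right-hand side $\lambda f(s_n)$ and bound the result by a multiple of $\varphi_m$ via \eqref{condH}; both reductions are equivalent.
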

\begin{proof} By Theorem \ref{EUp}, there exists a unique solution $\phi \geq 0$ of the problem
\begin{equation}\label{Dirichlet1NN}
\left\{\begin{array}{ll}
-\Delta_m u  = 1&\hbox{in } \Omega,\\[10pt]
u(x)=0&\hbox{on }\partial_m\Omega.
\end{array}\right.
\end{equation}
Moreover, $\phi \in L^\infty(\Omega,\nu)$, see~\eqref{cotalinf}. Set $M:= \Vert \phi \Vert_{L^\infty(\Omega,\nu)} < \infty.$

Fix $\lambda >0$. By assumption \eqref{noc}, there exists $s_n >0$ such that
\begin{equation}\label{e1No}
\frac{f(s_n)}{s_n} \leq \frac{1}{\lambda M}.
\end{equation}

Obviously,  $\underline{u} =0$ is a subsolution of problem~$(P(\lambda f))$.
We define the function
$$\overline{u}(x):= \left\{ \begin{array}{ll}  \displaystyle\frac{s_n \phi}{M} \quad &\hbox{if} \ \ x \in \Omega \\[10pt] 0 \quad &\hbox{if} \ \ x \in \partial_m\Omega. \end{array} \right.$$
 For $x \in \Omega$, we have
$$- \Delta_m \overline{u}(x) = -\displaystyle\frac{s_n}{M} \Delta_m \phi = \displaystyle\frac{s_n}{M} \geq \lambda f(s_n) \geq \lambda f(\overline{u}(x)).$$
Hence, $\overline{u}$ is a supersolution of problem~$(P(\lambda f))$. Then, since $\lambda f$ is non-decreasing, by Theorem \ref{Truncat}, we have that there exists a solution $w_\lambda$ of problem~$(P(\lambda f))$. Thus, since this holds for all $\lambda >0$, we have  $\lambda^* = +\infty$.
\end{proof}

\begin{remark}\label{remgi} {\rm  \noindent 1. Under the assumption~\eqref{H},
$$\lambda< \lambda_m(\Omega)\frac{1}{c_1}.$$
 Assume that  moreover $f$ is continuous  and $$\lim_{s\to +\infty}\frac{f(s)}{s}=+\infty.$$
 Then, for
$$
g(s):=\frac{s}{f(s)}, \quad s\geq 0,
$$
we have that   $g$ is continuous,  $0=g(0)\le g(s)\le \frac{1}{c_1}$ and $\lim_{s\to+\infty}g(s)=0;$ then, there exists $s_0>0$ such that
$$
g(s_0)=\max_{s\geq 0}g(s)
$$
so
 \begin{equation}\label{optimallambdamas}\lambda^* \leq  \lambda_m(\Omega)\frac{s_0}{f(s_0)}<\frac{s_0}{f(s_0)}.
\end{equation}
   We will see  that in some cases   $\lambda^* =  \lambda_m(\Omega)\frac{s_0}{f(s_0)}$.

\

\noindent  2.  In~\cite[Theorem~3.10]{MTtorsion} it is shown that, for general random walks,
$$\lambda_m(\Omega)=
1-\lim_n\sqrt[n]{\frac{g_{m,\Omega}(2n)}{g_{m,\Omega}(n)}}
$$
where
$$g_{m,\Omega}(0)=   \nu (\Omega)$$
and
$$g_{m,\Omega}(1)= \int_\Omega\int_\Omega dm_x (y)d\nu(x)  = L_m(\Omega, \Omega), $$
$$g_{m,\Omega}(2)= \int_\Omega\int_\Omega\int_\Omega dm_y(z)dm_x (y)d\nu(x),   $$
$$  \vdots$$
$$g_{m,\Omega}(n)= \int_{\hbox{\tiny$\underbrace{\Omega\times...\times\Omega}_n\times\Omega$}}dm_{x_n}(x_{n+1})\dots dm_{x_1}(x_2)d\nu(x_1)
$$
(see a probabilistic interpretation of these terms in such reference). In the case of weighted graphs, in~\cite[Appendix~A]{MTtorsion}, an iterative method is given to approximate $g_m(n)$. See~\cite{AGrigor} for an estimation from below of $\lambda_m(\Omega)$ via inradius.

\

\noindent
3.  Assume that $f\in \mathcal{C}^1([0,\infty[)$ is a strictly convex function satisfying~\eqref{C1}.  Observe that, in particular, \eqref{H}~is satisfied. Now, $g$, as defined in the previous point 1., belongs to $\mathcal{C}^1([0,\infty[)$, and for $s_0$ given  in the previous point,
$$
g^\prime (s_0)=\frac{f(s_0)-s_0\,f^\prime (s_0)}{f^2(s_0)}=0,
$$
which implies that $$f(s_0)-s_0\,f^\prime (s_0)=0.$$
Now, consider the function
$$
h(s)=f(s)-s\,f^\prime (s), \quad s\geq 0.
$$
  Observe that $h(0)=f(0)>0$, $h(s_0)=0$ and that
  $$
  h^\prime (s) =-s\,f^{\prime \prime}(s)\leq 0,
  $$
  since $f$ is  strictly convex, $h$ is decreasing and $h(s)> 0$ if $s<s_0$ and $h(s)<0$ if $s>s_0$. As a consequence, since $g^\prime(s)=\frac{h(s)}{f^2(s)}$, it follows that $g$ is increasing in $[0,s_0]$ and decreasing in $[s_0,\infty[$.
Therefore, the inverse of the function $g$ has two branches: the function $g^{-1}_1$ that is an increasing  continuous function in $\left[0,\frac{s_0}{f(s_0)}\right]$ with $$g^{-1}_1(0)=0,$$ and the function $g^{-1}_2$, that is a decreasing  continuous function in $\left]0,\frac{s_0}{f(s_0)}\right]$ with
$$\lim_{s\to 0^+}g_2^{-1}(s)=+\infty.$$
And $$g^{-1}_1\left(\frac{s_0}{f(s_0)}\right)=g^{-1}_2\left(\frac{s_0}{f(s_0)}\right).$$

Then, for every $0 < \lambda <  \lambda^*$, if $u$ is a solution to $(P(\lambda f))$, since $\lambda<g(u)$, we have
$$
g^{-1}_1(\lambda) \leq u \leq g^{-1}_2 (\lambda)  \quad \hbox{for all} \ \ \lambda \in [0, \lambda^*[.
$$
  In particular
\begin{equation}\label{boundWpar}
 0\le u_\lambda\le  g^{-1}_2 (\lambda^*)  \quad \hbox{for all} \ \ \lambda \in [0, \lambda^*[.
\end{equation}
\hfill$\blacksquare$
}\end{remark}

\begin{theorem}\label{extremal}
    Assume that $f\in \mathcal{C}^1([0,\infty[)$ is  strictly convex and satisfies~\eqref{C1}. Then,
there exists $u^*:=u_{\lambda^*}$ a bounded minimal solution to $(P(\lambda^* f))$. It is called the extremal solution.
\end{theorem}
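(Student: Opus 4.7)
The plan is to define $u^*$ as the pointwise monotone limit of the family $(u_\lambda)_{0\le\lambda<\lambda^*}$ as $\lambda\uparrow\lambda^*$ and then pass to the limit in the equation $(P(\lambda f))$. By Theorem~\ref{existence}(b)--(c), this family is monotone increasing in $\lambda$. The key ingredient that makes the limit well behaved is the uniform $L^\infty$-bound provided by Remark~\ref{remgi}.3 (which crucially uses the strict convexity of $f$ combined with the superlinearity in~\eqref{C1}): setting $C:=g^{-1}_2(\lambda^*)$, we have $0\le u_\lambda(x)\le C$ for $\nu$-a.e.\ $x\in\Omega_m$ and every $\lambda\in[0,\lambda^*)$. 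Hence
\[
u^*(x):=\lim_{\lambda\uparrow\lambda^*}u_\lambda(x)
\]
is a well-defined monotone pointwise limit, belongs to $L^\infty(\Omega_m,\nu)$ with $0\le u^*\le C$, and vanishes on $\partial_m\Omega$ because each $u_\lambda$ does; in particular, $u^*\in L^2_0(\Omega_m,\nu)$.

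Next I will pass to the limit in the pointwise identity
\[
u_\lambda(x)-\int_{\Omega_m}u_\lambda(y)\,dm_x(y)=\lambda\,f(u_\lambda(x)),\qquad\nu\text{-a.e. }x\in\Omega.
\]
Since $|u_\lambda|\le C$ and $m_x$ is a probability measure, the dominated convergence theorem gives $\int u_\lambda(y)\,dm_x(y)\to\int u^*(y)\,dm_x(y)$; continuity of $f$ on $[0,C]$ handles the right-hand side. In the limit we obtain $-\Delta_m u^*(x)=\lambda^* f(u^*(x))$ for $\nu$-a.e.\ $x\in\Omega$. Together with the vanishing boundary condition, this shows that $u^*$ is a bounded solution of $(P(\lambda^* f))$.

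For minimality, let $v$ be any bounded solution of $(P(\lambda^* f))$. Since $f\ge f(0)>0$ by~\eqref{C1}, for every $\lambda\in[0,\lambda^*)$ one has $-\Delta_m v=\lambda^* f(v)\ge\lambda f(v)$, so $v$ is a supersolution of $(P(\lambda f))$. The iterative scheme used in Step~3 of the proof of Theorem~\ref{existence}, coupled with the Maximum Principle at each step, then yields $u_n\le v$ for every $n$ and hence $u_\lambda\le v$. Letting $\lambda\uparrow\lambda^*$ gives $u^*\le v$, which is exactly minimality of $u^*$ within the class of bounded solutions at $\lambda=\lambda^*$.

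The only substantive obstacle is the uniform $L^\infty$-bound: without it, the monotone limit could escape to infinity or fail to satisfy the equation, and indeed in the purely local setting bounded minimal extremal solutions need not exist. Here this obstacle is bypassed by Remark~\ref{remgi}.3, which provides $u_\lambda\le g^{-1}_2(\lambda^*)$ uniformly; once this is in hand, the rest of the argument reduces to monotone and dominated convergence plus the comparison argument already built into the iterative construction.
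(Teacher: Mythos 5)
Your proposal is correct and takes essentially the same route as the paper: the paper's (very terse) proof also obtains $u^*$ as the monotone limit of $u_\lambda$ as $\lambda\nearrow\lambda^*$, relying precisely on the uniform bound $0\le u_\lambda\le g_2^{-1}(\lambda^*)$ of \eqref{boundWpar}, and asserts minimality by the same comparison with an arbitrary bounded solution. You have merely supplied the details (dominated convergence in the equation and the iterative comparison for minimality) that the paper leaves to the reader.
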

\begin{proof}  Having in mind ~\eqref{boundWpar},  $u^*$ is obtained by taking limits to $u_\lambda$ as $\lambda\nearrow \lambda^*$. It is easy to see that it is minimal.
\end{proof}

 That the function $f$ must be strictly convex is a necessary condition. Note Example~\ref{aladm10} where the function is only convex and there is no solution for $\lambda=\lambda^*$.

\begin{corollary}\label{BoundI}  Assume that $f\in \mathcal{C}^1([0,\infty[)$ is a  strictly convex function satisfying~\eqref{C1}. Then, we can encapsulate any solution $u$ to problem~$(P(\lambda f))$, independently of the random walk space involved, in the following way:
\begin{equation}\label{boundW}
g^{-1}_1(\lambda) \leq u \leq g^{-1}_2 (\lambda)  \quad \hbox{for all} \ \ \lambda \in [0, \lambda^*].
\end{equation}
  In particular,
$$
  u_\lambda\le  g^{-1}_2 (\lambda^*)  \quad \hbox{for all} \ \ \lambda \in [0, \lambda^*].
$$
\end{corollary}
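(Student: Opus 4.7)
The plan is to make rigorous the argument already sketched in point~3 of Remark~\ref{remgi}, where the bound was asserted for $\lambda\in(0,\lambda^*)$, and then to extend it to $\lambda=\lambda^*$. The universal nature of the bound (independent of the random walk space) should reflect the fact that the argument uses only the equation itself together with the shape of the auxiliary function $g(s)=s/f(s)$, which depends solely on $f$.

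The core step is to derive, for any solution $u$ of $(P(\lambda f))$, the pointwise inequality $g(u(x))\ge\lambda$ for $\nu$-a.e.\ $x\in\Omega$. To do this I would first apply the Maximum Principle to $-u$ (noting that $-\Delta_m(-u)=-\lambda f(u)\le 0$ since $f(u)\ge f(0)>0$ by \eqref{H}) to obtain $u\ge 0$ on $\Omega_m$. Then I would unfold the definition of $\Delta_m$ and use the boundary condition $u=0$ on $\partial_m\Omega$ to rewrite the equation as
\[
u(x)=\int_{\Omega_m} u(y)\,dm_x(y)+\lambda\,f(u(x)).
\]
Since the integral on the right is nonnegative, we obtain $u(x)\ge\lambda f(u(x))$, and dividing by $f(u(x))\ge f(0)>0$ yields $g(u(x))\ge\lambda$.

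With this pointwise lower bound in hand, I would conclude by invoking the structural description of $g$ already established in Remark~\ref{remgi}: $g$ is strictly increasing on $[0,s_0]$ with inverse $g^{-1}_1$ and strictly decreasing on $[s_0,\infty[$ with inverse $g^{-1}_2$, and its maximum satisfies $g(s_0)>\lambda^*$ by \eqref{optimallambdamas}, so both $g^{-1}_1(\lambda)$ and $g^{-1}_2(\lambda)$ are well defined for every $\lambda\in[0,\lambda^*]$. A short case split on whether $u(x)\le s_0$ or $u(x)> s_0$, combined with the monotonicity of each branch and the trivial sandwich $g^{-1}_1(\lambda)\le s_0\le g^{-1}_2(\lambda)$, then delivers \eqref{boundW} at every point. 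The ``in particular'' assertion drops out by combining Theorem~\ref{existence}(c) (monotonicity of the minimal family) with Theorem~\ref{extremal} (existence of $u^*=u_{\lambda^*}$) and applying \eqref{boundW} at $\lambda^*$: $u_\lambda\le u^*\le g^{-1}_2(\lambda^*)$.

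I do not anticipate a substantial obstacle: once the pointwise inequality $g(u)\ge\lambda$ is observed, everything else is bookkeeping on the two branches of $g^{-1}$. The only mildly delicate point is to make sure $\lambda^*$ itself lies in the domain of both inverses, which is exactly what the strict inequality $\lambda^*<s_0/f(s_0)=g(s_0)$ from \eqref{optimallambdamas} provides; this is also what makes $g^{-1}_2(\lambda^*)$ a \emph{finite} bound on every minimal solution, independently of the random walk space at hand.
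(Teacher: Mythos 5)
Your proposal is correct and follows essentially the same route as the paper, which presents this corollary as an immediate consequence of point~3 of Remark~\ref{remgi} (where the key inequality $\lambda\le g(u)$ and the two-branch analysis of $g^{-1}$ already appear, with $\lambda^*<g(s_0)$ guaranteed by \eqref{optimallambdamas}) together with Theorem~\ref{extremal}. Your only addition is to justify the pointwise inequality $g(u(x))\ge\lambda$ by writing $u(x)=\int_{\Omega}u(y)\,dm_x(y)+\lambda f(u(x))\ge\lambda f(u(x))$ after applying the Maximum Principle, which is exactly the step the paper leaves implicit, so nothing further is needed.
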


\begin{remark} \rm Let us point out that the above result shows a notable difference with the local case.  In the local case, for example, the bifurcation diagrams for $f(s)=e^s$ can not be encapsulated in such a similar way for dimensions $n\ge 3$ (see for instance \cite{D}).

  Moreover, for the nonlocal  fractional $s$-Laplacian $(- \Delta)^s$ in $\R^N$,  $0<s<1$, the extremal solution $u^*$ is bounded if $N < 10s$ (see \cite{R1}). Now, as consequence of Corollary \ref{BoundI},  for the nonlocal Laplacian $\Delta_{m^J}$ the extremal solution is bounded independently of the dimension $N$ and the kernel $J$. \hfill$\blacksquare$
\end{remark}

\begin{remark}
{\rm   Assume that $f\in \mathcal{C}^1([0,\infty[)$ is a  strictly convex function satisfying~\eqref{C1}. Then,
$$-\Delta_mu_\lambda=\lambda f(u_\lambda)\le \lambda f( g^{-1}_2 (\lambda^*))\le \lambda\frac{f( g^{-1}_2 (\lambda^*))}{\lambda_m(\Omega)\alpha_\Omega}\lambda_m(\Omega)\varphi_m$$
$$=-\Delta_m\left(\lambda\frac{f( g^{-1}_2 (\lambda^*))}{\lambda_m(\Omega)\alpha_\Omega}\varphi_m\right).$$
Therefore, by the maximum principle,
$$0\le u_\lambda \le \lambda\frac{f( g^{-1}_2 (\lambda^*))}{\lambda_m(\Omega)\alpha_\Omega}\varphi_m.$$
\hfill$\blacksquare$}\end{remark}

\subsection{Stability}

Assume that  $g \in C^1(\R)$   and consider the energy functional  introduced previously:
$$\mathcal{E}_{g}(u)= \frac{1}{4 } \int_{\Omega_m \times \Omega_m} \vert \nabla u(x,y) \vert^2 d(\nu\otimes m_x)(x,y) - \int_{\Omega} G(u(x)) d\nu(x),$$
where  $G' =g$.  It is easy to see that, for $u \in L^\infty(\Omega_m, \nu) \cap L_0^2(\Omega_m, \nu)$,
$$\langle D\mathcal{E}_{g}(u), v \rangle = \frac12\int_{\Omega_m \times \Omega_m} \nabla u (x,y) \cdot \nabla v(x,y)d(\nu\otimes m_x)(x,y) - \int_\Omega g(u(x)) v(x) d\nu(x),
$$
for $v \in  L^\infty(\Omega_m, \nu) \cap L_0^2(\Omega_m, \nu)$.
 That is,  if $$E_u(t,v) := \mathcal{E}_{g}(u + tv),$$ we have that
$$E^{\prime}_u(0,v) = \frac12\int_{\Omega_m \times \Omega_m} \nabla u (x,y) \cdot \nabla v(x,y)d(\nu\otimes m_x)(x,y) - \int_\Omega g(u(x)) v(x) d\nu(x).
$$

Similarly to~Lemma \ref{EnerSol}, we have the following result  for $L^\infty$-bounded  solutions of problem $(P(g))$.

 \begin{proposition}  Assume $g \in C^1(\R)$. If  $u$ is a  bounded  solution of problem $(P(g))$ then $E^{\prime}_u(0,v) =0$ for all direction $v \in L^\infty(\Omega_m, \nu) \cap L_0^2(\Omega_m, \nu)$.
 \end{proposition}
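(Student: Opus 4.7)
The plan is to mimic the proof of Lemma~\ref{EnerSol}, noting that the global bound on $\g$ assumed in~\eqref{lqn01} there is now replaced by the $L^\infty$-boundedness of $u$: since $g \in C^1(\R)$ and $u, v$ are bounded, the values of $u + tv$ for $|t|\le 1$ lie in a compact interval $K \subset \R$ on which $g$ (and $g'$) are bounded. In particular $g\circ u \in L^\infty(\Omega,\nu) \subset L^2(\Omega,\nu)$, so every integral below makes sense.

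First I would differentiate $E_u(t,v) = \mathcal{E}_g(u+tv)$ term by term at $t=0$. For the quadratic piece, expanding
$$|\nabla u(x,y) + t\nabla v(x,y)|^2 = |\nabla u(x,y)|^2 + 2t\, \nabla u(x,y)\,\nabla v(x,y) + t^2 |\nabla v(x,y)|^2$$
and using that $\nabla u, \nabla v \in L^2(\Omega_m\times\Omega_m,\nu\otimes m_x)$ (which is immediate from boundedness and $\nu(\Omega_m)<\infty$), one obtains
$$\frac{d}{dt}\bigg|_{t=0} \frac{1}{4}\int_{\Omega_m\times\Omega_m} |\nabla(u+tv)|^2 d(\nu\otimes m_x) = \frac{1}{2}\int_{\Omega_m\times\Omega_m} \nabla u\,\nabla v\, d(\nu\otimes m_x).$$
For the potential piece I would apply dominated convergence: the difference quotient satisfies
$$\left|\frac{G(u(x)+tv(x)) - G(u(x))}{t}\right| \le \|g\|_{L^\infty(K)}\,|v(x)|,$$
which is an integrable majorant (since $v\in L^2\subset L^1$ on the finite measure space $\Omega$), and the pointwise limit is $g(u(x))\,v(x)$. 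Hence
$$\frac{d}{dt}\bigg|_{t=0} \int_\Omega G(u+tv)\, d\nu = \int_\Omega g(u)\,v\, d\nu.$$

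Next I would apply the integration by parts formula stated in the excerpt. Since $v\in L^2_0(\Omega_m,\nu)$ vanishes $\nu$-a.e.\ on $\partial_m\Omega$, and since $\Delta_m u \in L^2(\Omega_m,\nu)$ (by boundedness of $u$ and $m_x$ being a probability measure), the formula yields
$$\frac{1}{2}\int_{\Omega_m\times\Omega_m} \nabla u\,\nabla v\, d(\nu\otimes m_x) = -\int_{\Omega_m} \Delta_m u\,(x)\, v(x)\, d\nu(x) = -\int_{\Omega} \Delta_m u\,(x)\, v(x)\, d\nu(x).$$
Finally, since $u$ solves $(P(g))$, i.e.\ $-\Delta_m u(x) = g(u(x))$ $\nu$-a.e.\ in $\Omega$, the last integral equals $\int_\Omega g(u)\,v\, d\nu$, and combining with the potential term gives $E_u'(0,v) = 0$.

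The only delicate point is the interchange of differentiation and integration in the nonlinear term; this is where the hypothesis that $u$ be bounded does the real work, replacing the pointwise growth bound \eqref{lqn01} used in Lemma~\ref{EnerSol}. Apart from that the argument is routine once one verifies that $g(u), \Delta_m u$ and the nonlocal gradients lie in the appropriate $L^2$ spaces, which is immediate from $\|u\|_\infty, \|v\|_\infty < \infty$ and $\nu(\Omega_m)<\infty$.
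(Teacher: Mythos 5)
Your proposal is correct and follows essentially the same route the paper intends: the paper gives no separate proof, merely noting that the Fréchet-derivative formula for $\mathcal{E}_g$ holds for bounded $u,v$ (the $L^\infty$-bound on $u$ playing the role of hypothesis \eqref{lqn01}) and then invoking the integration by parts formula as in Lemma \ref{EnerSol}. Your dominated-convergence justification of the differentiation under the integral sign and the subsequent use of the equation $-\Delta_m u = g(u)$ is exactly the argument being alluded to.
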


The following concept of stability is the same than the given for the local problem. See Dupaigne~\cite{D} for a large study of stability for   local problems, we use the same notation  than in such reference in this context.

\begin{definition} {\rm
 We say that a   solution $u$ of problem $(P(g))$ is {\it stable} if $E^{\prime \prime}_u(0,v) \geq 0$  for all direction  $v \in  L^\infty(\Omega_m, \nu) \cap  L_0^2(\Omega_m, \nu)$.}
\end{definition}

Let $g \in C^1(\R)$, and suppose that  $u$ is a  bounded solution  of problem $(P(g))$. We have that  $E^{\prime}_u(0,v) =0$ for all direction $v \in L^\infty(\Omega_m, \nu) \cap L_0^2(\Omega_m, \nu)$, and then
$$\frac{E^{\prime}_u(t,v)- E^{\prime}_u(0,v)}{t} =  \frac{\langle D\mathcal{E}_{g}(u+tv), v \rangle}{t} $$ $$\frac{1}{t} \, \frac{1}{2} \int_{\Omega_m \times \Omega_m} \nabla u (x,y) \cdot \nabla v(x,y)d(\nu\otimes m_x)(x,y) +\frac{1}{2} \int_{\Omega_m \times \Omega_m} \vert \nabla v (x,y) \vert^2 d(\nu\otimes m_x)(x,y)$$ $$-  \int_\Omega \frac{1}{t}(g((u+ tv)(x))- g(u)) v(x) d\nu(x).$$
Applying the Dominate Convergence Theorem, we get
$$E_u^{\prime\prime}(0,v) = \frac{1}{2} \int_{\Omega_m \times \Omega_m} \vert \nabla v (x,y) \vert^2 d(\nu\otimes m_x)(x,y)- \int_\Omega g'(u(x)) v^2(x) d\nu(x).$$
We define
$$
Q_u(v):= \displaystyle\frac{1}{2} \int_{\Omega_m \times \Omega_m} \vert \nabla v (x,y) \vert^2 d(\nu\otimes m_x)(x,y)-\displaystyle\int_\Omega g'(u(x)) v^2(x) d\nu(x).
$$
Then, we have the following result.

\begin{proposition}\label{equiv}  Let $g \in C^1(\R)$. Let $u$ be a   bounded solution  of problem $(P(g))$.
 Then,  $u$  is stable if and only if $Q_u(v)  \geq 0$
for all  $v \in L^\infty(\Omega_m, \nu) \cap L_0^2(\Omega_m, \nu)$.
\end{proposition}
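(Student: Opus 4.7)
The plan is to make rigorous the second-derivative calculation that already appears in the paragraph immediately preceding the proposition, and then observe that once the identity $E''_u(0,v)=Q_u(v)$ is established, stability (defined via $E''_u(0,v)\ge 0$) is literally the same as the condition $Q_u(v)\ge 0$.

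First I would write, for $v\in L^\infty(\Omega_m,\nu)\cap L^2_0(\Omega_m,\nu)$ and $t\in\mathbb{R}$,
\[
E_u'(t,v)=\langle D\mathcal{E}_g(u+tv),v\rangle=\tfrac12\int_{\Omega_m\times\Omega_m}\nabla(u+tv)(x,y)\cdot\nabla v(x,y)\,d(\nu\otimes m_x)-\int_{\Omega}g((u+tv)(x))v(x)\,d\nu(x).
\]
Since $u$ is a bounded solution of $(P(g))$, Lemma \ref{EnerSol} (applied in its $L^\infty$ version, as indicated in the preceding proposition) gives $E_u'(0,v)=\langle D\mathcal{E}_g(u),v\rangle=0$. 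Using that $\nabla(u+tv)=\nabla u+t\nabla v$, the difference quotient splits cleanly as
\[
\frac{E_u'(t,v)-E_u'(0,v)}{t}=\tfrac12\int_{\Omega_m\times\Omega_m}|\nabla v(x,y)|^2\,d(\nu\otimes m_x)-\int_{\Omega}\frac{g((u+tv)(x))-g(u(x))}{t}\,v(x)\,d\nu(x).
\]

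Next I would pass to the limit $t\to 0$ in the second integral; this is the only real step and the main (modest) obstacle. Fix $t_0>0$ and set $K:=[-\|u\|_\infty-t_0\|v\|_\infty,\,\|u\|_\infty+t_0\|v\|_\infty]$, a compact interval. For $|t|\le t_0$, the values $u(x)+tv(x)$ and $u(x)$ lie in $K$, so by the mean value theorem together with $g\in C^1(\mathbb{R})$,
\[
\left|\frac{g((u+tv)(x))-g(u(x))}{t}\,v(x)\right|\le \|g'\|_{L^\infty(K)}\,|v(x)|^2,
\]
and the right-hand side is in $L^1(\Omega,\nu)$ because $v\in L^2_0(\Omega_m,\nu)$. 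Pointwise, the integrand converges $\nu$-a.e. to $g'(u(x))v(x)^2$. The dominated convergence theorem therefore yields
\[
E_u''(0,v)=\tfrac12\int_{\Omega_m\times\Omega_m}|\nabla v(x,y)|^2\,d(\nu\otimes m_x)(x,y)-\int_\Omega g'(u(x))v^2(x)\,d\nu(x)=Q_u(v).
\]

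Finally I would conclude: by definition, $u$ is stable iff $E_u''(0,v)\ge 0$ for every $v\in L^\infty(\Omega_m,\nu)\cap L^2_0(\Omega_m,\nu)$, which by the identity just established is equivalent to $Q_u(v)\ge 0$ for every such $v$. This proves both implications simultaneously, so no separate argument for the converse is needed.
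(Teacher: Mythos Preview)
Your proposal is correct and follows exactly the route the paper takes: the paper computes $E_u''(0,v)$ in the paragraph immediately preceding the proposition (via the same difference quotient and an appeal to dominated convergence), defines $Q_u$ to be the resulting expression, and then states the proposition as an immediate consequence. Your version simply supplies the explicit dominating function $\|g'\|_{L^\infty(K)}|v|^2$ that justifies the dominated convergence step, which the paper leaves implicit.
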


\begin{remark}\label{positive test}{\rm
In  the above Proposition we can test only non-negative functions. Indeed, for $v \in L^\infty(\Omega_m, \nu) \cap L_0^2(\Omega_m, \nu)$,   $Q_u(v)\ge Q_u(v^+)+Q_u(v^-)$.
\hfill$\blacksquare$}\end{remark}

\begin{remark}\label{dom001}\rm    Assume that $f\in \mathcal{C}^1([0,\infty[)$ is a convex function satisfying~\eqref{C1}.  If $u$ is a solution to $(P(\lambda f))$, $0\le\lambda\le \lambda^*$, with $u\le s_0$, then it is  stable. In fact, let $v \in L^\infty(\Omega_m, \nu) \cap L_0^2(\Omega_m, \nu)$, then, by the 2-Poincar\'{e} type inequality~\eqref{Realyq},
$$\frac{1}{2} \int_{\Omega_m \times \Omega_m} \vert \nabla v (x,y) \vert^2 d(\nu\otimes m_x)(x,y)\ge \lambda_m(\Omega)\int v^2d\nu.$$
Now since $\lambda^* \leq  \lambda_m(\Omega)\frac{s_0}{f(s_0)}$ and $f'(s_0)=\frac{f(s_0)}{s_0}$, we get
$$\frac{1}{2} \int_{\Omega_m \times \Omega_m} \vert \nabla v (x,y) \vert^2 d(\nu\otimes m_x)(x,y)\ge \lambda_m(\Omega)\int v^2d\nu\ge\lambda f'(s_0)\int v^2d\nu,$$
and since $u\le s_0$,
$$\frac{1}{2} \int_{\Omega_m \times \Omega_m} \vert \nabla v (x,y) \vert^2 d(\nu\otimes m_x)(x,y) \ge\lambda f'(s_0) \int v^2d\nu\ge \lambda \int f'(u)v^2d\nu.$$
We will now improve this result. \hfill$\blacksquare$
\end{remark}

\begin{theorem}\label{megusta} Assume that  $f \in C^1([0,\infty[)$   satisfies hypotheses~\eqref{H}. For every $0 \leq \lambda < \lambda^*$, the minimal solution $u_\lambda$ of problem~$(P(\lambda f))$ is stable. Furthermore, if $u^*$ exists then it is also stable.
\end{theorem}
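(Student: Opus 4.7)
The plan is to argue by contradiction, adapting the classical shrinking-subsolution argument. The case $\lambda=0$ is immediate since $u_0\equiv 0$ and $Q_0(v)=\tfrac12\int_{\Omega_m\times\Omega_m}|\nabla v|^2\,d(\nu\otimes m_x)\ge 0$. So fix $0<\lambda<\lambda^*$ and suppose $u_\lambda$ is not stable. By Proposition~\ref{equiv} the first eigenvalue
$$\mu_1:=\inf\Bigl\{\tfrac{Q_{u_\lambda}(v)}{\int_\Omega v^2\,d\nu}\,:\,v\in L^\infty(\Omega_m,\nu)\cap L^2_0(\Omega_m,\nu),\ v\neq 0\Bigr\}$$
of the linearized operator $-\Delta_m-\lambda f'(u_\lambda)$ with Dirichlet data on $\partial_m\Omega$ is strictly negative. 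The Poincar\'e inequality~\eqref{Realyq} and $f'(u_\lambda)\in L^\infty$ (since $u_\lambda$ is bounded and $f\in C^1$) make this Rayleigh quotient bounded below and coercive, so the same variational scheme used to produce $\varphi_m$ around~\eqref{eigenproblem} yields a non-null minimizer $\phi\in L^\infty(\Omega_m,\nu)$; replacing $\phi$ by $|\phi|$ (admissible since $|\nabla|\phi||\le|\nabla\phi|$ pointwise on $X\times X$) and invoking the $m$-connectedness of $\Omega$ exactly as for~\eqref{est001N} gives $\phi>0$ on $\Omega$.

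Next I would set $w_\epsilon:=u_\lambda-\epsilon\phi$ for small $\epsilon>0$. Because $u_\lambda\ge u_0>0$ in $\Omega$ (strict positivity of $u_0$ following from $f(0)>0$ and the identity $u_0(x)=\int_\Omega u_0\,dm_x+\lambda f(0)$ together with $m$-connectedness), we have $w_\epsilon\ge 0$ in $\Omega$ and $w_\epsilon=0$ on $\partial_m\Omega$. A direct Taylor expansion, combined with the eigenvalue equation $-\Delta_m\phi=(\mu_1+\lambda f'(u_\lambda))\phi$, gives
$$-\Delta_m w_\epsilon-\lambda f(w_\epsilon)=\epsilon\phi\bigl(-\mu_1+\lambda(f'(\xi_\epsilon)-f'(u_\lambda))\bigr),$$
with $\xi_\epsilon$ pointwise between $w_\epsilon$ and $u_\lambda$. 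Uniform continuity of $f'$ on the bounded range of $u_\lambda$ and $\|\phi\|_\infty<\infty$ force the parenthesis to exceed $-\mu_1/2>0$ once $\epsilon$ is sufficiently small, so $w_\epsilon$ is a bounded supersolution of $(P(\lambda f))$.

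Now I would invoke Theorem~\ref{Truncat} with $\underline{u}\equiv 0$ (a subsolution since $f(0)>0$) and $\overline{u}:=w_\epsilon$, permissible because $\lambda f$ is non-decreasing: this produces a bounded solution $w$ of $(P(\lambda f))$ with $0\le w\le w_\epsilon$. Since $\phi>0$ on $\Omega$ we have $w_\epsilon<u_\lambda$ pointwise there, hence $w<u_\lambda$, contradicting the minimality of $u_\lambda$ established in Theorem~\ref{existence}. For the extremal solution, when $u^*$ exists it is the monotone limit $\lim_{\lambda\uparrow\lambda^*}u_\lambda$ and is bounded; for each admissible $v$ Lebesgue's dominated convergence—with majorant $\lambda^*\|f'\|_{L^\infty[0,\|u^*\|_\infty]}v^2\in L^1(\Omega,\nu)$—passes to the limit in $Q_{u_\lambda}(v)\ge 0$ and yields $Q_{u^*}(v)\ge 0$.

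The main obstacle is producing the linearized first eigenfunction $\phi$ with the three required properties—attainment of the infimum, $L^\infty$-bound, and strict positivity on $\Omega$—from the random-walk axioms alone rather than from classical elliptic regularity. Once this step is secured in parallel with the construction surrounding~\eqref{eigenproblem}-\eqref{condH}, the shrinking-subsolution contradiction proceeds essentially verbatim to the local Gelfand case, with reversibility of $\nu$ and the $m$-connectedness of $\Omega$ playing the role that continuity and path-connectedness play there.
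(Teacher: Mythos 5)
Your argument is a genuinely different route from the paper's: you run the classical ``shrink the minimal solution by $\epsilon$ times the first eigenfunction of the linearized operator'' contradiction, whereas the paper never touches the linearized eigenvalue problem. Instead it takes the monotone iteration $u_n\uparrow u_\lambda$ from Step 2 of Theorem~\ref{existence}, tests the equation satisfied by $u_\lambda-u_n+\delta$ against $v^2/(u_\lambda-u_n+\delta)$, controls the resulting double integral by $\frac12\int\int|\nabla v|^2$ via the discrete Picone-type identity~\eqref{Leib11dos} and an elementary pointwise inequality, and then applies Fatou's lemma twice ($\delta\to0^+$, then $n\to\infty$) to land directly on $\frac12\int\int|\nabla v|^2\,dm_x\,d\nu\ge\lambda\int f'(u_\lambda)v^2\,d\nu$. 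The whole point of that construction is to bypass exactly the object your proof hinges on.

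And that is where your proposal has a genuine gap. You need a minimizer $\phi$ of the Rayleigh quotient of $-\Delta_m-\lambda f'(u_\lambda)$ that is attained, nonnegative and in $L^\infty$. In a general random walk space none of this is available: the form $\frac12\int\int|\nabla v|^2\,dm_x\,d\nu$ gives no compactness in $L^2(\Omega,\nu)$ (the denominator $\int_\Omega v^2\,d\nu$ is not weakly continuous along a weakly convergent minimizing sequence), so coercivity alone does not produce a minimizer. This is not a technicality you can ``secure in parallel with the construction surrounding~\eqref{eigenproblem}--\eqref{condH}'': those displays are standing \emph{assumptions} of the paper about the operator $-\Delta_m$ itself, not theorems, and no analogous assumption is made for the linearized operator. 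Indeed, the paper states explicitly, in the remark following Proposition~\ref{equiv} and preceding Proposition~\ref{good1}, that $\lambda_1(-\Delta_m-g'(u))$ is known to exist for finite weighted graphs but that ``in general we do not know if it exists,'' and its existence is only recovered under extra hypotheses (compactness of $v\mapsto\int v\,dm_x$, sign conditions on $g'(u)$, $\mu_1\neq0$). The $L^\infty$ bound on $\phi$ is likewise not automatic, since the pointwise identity $\bigl(1-\mu_1-\lambda f'(u_\lambda(x))\bigr)\phi(x)=\int_\Omega\phi\,dm_x$ has a coefficient of uncontrolled sign when $\lambda f'(u_\lambda)$ is large. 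The rest of your argument (uniform positivity $u_\lambda\ge\lambda f(0)>0$, the supersolution computation for $w_\epsilon$, the use of Theorem~\ref{Truncat} and minimality, and the dominated-convergence passage to $u^*$) is sound; but as written the proof only covers settings, such as finite weighted graphs, where the linearized eigenfunction is known to exist, and so does not prove the theorem at the stated level of generality.
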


Therefore, by Theorem \ref{extremal} we have the following consequence,

\begin{corollary}
 Assume that $f\in \mathcal{C}^1([0,\infty[)$ is  strictly convex and satisfies~\eqref{C1}. Then,
the bounded minimal solution  $u_{\lambda}$ to $(P(\lambda f)))$ is stable for every $0\leq \lambda \leq \lambda^*$.
\end{corollary}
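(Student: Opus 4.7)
The plan is to argue by contradiction using a Perron--Frobenius / Crandall--Rabinowitz scheme of the kind used for the local problem. For $\lambda\in[0,\lambda^*)$ I consider the principal eigenvalue of the linearized operator at the minimal solution $u_\lambda$,
\[
\mu_1(\lambda) := \inf_{v\in L^2_0(\Omega_m,\nu)\setminus\{0\}} \frac{Q_{u_\lambda}(v)}{\int_\Omega v^2\,d\nu},
\]
so that, by Proposition~\ref{equiv} together with a truncation argument reducing the $L^\infty\cap L^2_0$ class to $L^2_0$, stability of $u_\lambda$ amounts to $\mu_1(\lambda)\ge 0$. The case $\lambda=0$ is immediate, since $u_0\equiv 0$ and the $2$-Poincar\'e inequality~\eqref{Realyq} give $\mu_1(0)=\lambda_m(\Omega)>0$.

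Suppose for contradiction that $\mu_1(\lambda)<0$ for some $\lambda\in(0,\lambda^*)$. I would first rewrite the equation as $u_\lambda(x)=\int_\Omega u_\lambda(y)\,dm_x(y)+\lambda f(u_\lambda(x))\ge\lambda f(0)>0$ in $\Omega$, so $u_\lambda$ is bounded below by a positive constant. Then I would produce a non-negative bounded eigenfunction $\phi_1$ realizing $\mu_1(\lambda)$, namely a solution of
\[
-\Delta_m\phi_1 - \lambda f'(u_\lambda)\phi_1 = \mu_1(\lambda)\phi_1 \ \text{ in } \Omega, \quad \phi_1=0 \ \text{ on } \partial_m\Omega,
\]
by a direct minimization analogous to Theorem~\ref{EUp} (a normalized minimizing sequence is bounded in the Dirichlet form by Poincar\'e and by the $L^\infty$-bound on the potential $\lambda f'(u_\lambda)$). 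Strict positivity of $\phi_1$ inside $\Omega$ would then follow by iterating the identity $(1-\lambda f'(u_\lambda)-\mu_1(\lambda))\phi_1(x)=\int_\Omega\phi_1(y)\,dm_x(y)$ together with $m$-connectedness, in the spirit of~\eqref{est001N}.

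With such a $\phi_1$ at hand, set $\bar u:=u_\lambda-\epsilon\phi_1$ for $\epsilon>0$ small. The lower bound $u_\lambda\ge\lambda f(0)$ together with $\phi_1\in L^\infty$ give $\bar u>0$ in $\Omega$, while $\bar u=0$ on $\partial_m\Omega$. Using the linearized equation and a uniform first-order Taylor expansion of $f$ (valid since $f\in\mathcal{C}^1$ and $u_\lambda,\phi_1$ are bounded), I would compute
\[
-\Delta_m\bar u - \lambda f(\bar u) = -\epsilon\bigl(\mu_1(\lambda)+o(1)\bigr)\phi_1,
\]
which is strictly positive for $\epsilon$ small. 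Hence $\bar u$ is a bounded supersolution of $(P(\lambda f))$ with $\bar u<u_\lambda$ in $\Omega$, while $\underline u\equiv 0$ is a bounded subsolution (as $f(0)>0$). Theorem~\ref{Truncat} then produces a solution $\tilde u$ of $(P(\lambda f))$ with $0\le\tilde u\le\bar u<u_\lambda$, contradicting the minimality of $u_\lambda$. For the extremal case, $u_\lambda\nearrow u^*\in L^\infty$ as $\lambda\nearrow\lambda^*$, and dominated convergence (with bound $\lambda f'(u_\lambda)v^2\le\lambda^*\|f'\|_{L^\infty([0,\|u^*\|_\infty])}v^2$) gives $Q_{u^*}(v)=\lim_{\lambda\nearrow\lambda^*}Q_{u_\lambda}(v)\ge 0$ for every $v\in L^\infty(\Omega_m,\nu)\cap L^2_0(\Omega_m,\nu)$.

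The main obstacle I expect is the construction of the principal eigenfunction $\phi_1$: the spectral theory of the Schr\"odinger-type operator $-\Delta_m-\lambda f'(u_\lambda)$ requires a compactness or irreducibility input that is immediate on locally finite graphs and follows from compactness of $M_m$ for convolution kernels on bounded $\Omega$, but in full generality may require a Krein--Rutman argument applied to the positive resolvent $(CI-\Delta_m-\lambda f'(u_\lambda))^{-1}$ for $C$ large, carefully leveraging the standing hypotheses~\eqref{Realyq}--\eqref{condH}.
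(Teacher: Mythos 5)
Your scheme is the classical Crandall--Rabinowitz contradiction: if the linearization at $u_\lambda$ had negative bottom spectrum with a nonnegative principal eigenfunction $\phi_1$, then $u_\lambda-\epsilon\phi_1$ would be a bounded supersolution lying strictly below $u_\lambda$, and Theorem~\ref{Truncat} would produce a solution below $u_\lambda$, contradicting minimality. The algebra of that step is sound (the Taylor remainder is itself proportional to $\phi_1$, so no uniform lower bound on $\phi_1$ is needed), and your treatment of $\lambda=\lambda^*$ by dominated convergence is exactly the paper's. The genuine gap is the one you flag yourself and then do not close: the existence of a minimizer $\phi_1$ of the Rayleigh quotient. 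In a general random walk space the quadratic form is, up to the bounded potential, $\|v\|_2^2-\langle M_m v,v\rangle$ with $M_m$ merely a bounded self-adjoint operator on $L^2_0(\Omega_m,\nu)$; the bottom of the spectrum of $-\Delta_m-\lambda f'(u_\lambda)$ need not be an eigenvalue, and a ``direct minimization analogous to Theorem~\ref{EUp}'' does not go through because the constraint $\|v\|_2=1$ is not weakly closed and there is no compact embedding to restore strong convergence. The paper is explicit on this point: it states that $\lambda_1(-\Delta_m-g'(u))$ is known to exist for finite weighted graphs but that in general its existence is unknown, and Proposition~\ref{good1} and Theorem~\ref{minimal-continua} are deliberately stated under the \emph{additional} hypothesis that this eigenvalue exists. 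The Krein--Rutman fallback you suggest does not rescue the general case either, since it also requires compactness of the resolvent, which holds for finite graphs and for $m^J$ on bounded domains but not for an arbitrary random walk $m$. As written, your argument proves the corollary only in those special cases, not in the stated generality.

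By contrast, the paper's proof avoids spectral theory entirely: it takes the monotone iteration $u_n\nearrow u_\lambda$ from the existence proof, tests the equation satisfied by $u_\lambda-u_n+\delta$ with $v^2/(u_\lambda-u_n+\delta)$, and uses the pointwise inequality
$(a-b)\,\frac{(c+d)(c-d)(a+b)-(c^2+d^2)(a-b)}{ab}\le 2(c-d)^2$
for nonnegative $a,b,c,d$ (a nonlocal Picone-type inequality, via \eqref{Leib11dos}) to dominate the resulting expression by $\frac12\iint|\nabla v|^2$; two applications of Fatou's lemma (first $\delta\to0^+$, then $n\to\infty$) yield $\frac12\iint|\nabla v|^2\ge\lambda\int_\Omega f'(u_\lambda)v^2\,d\nu$ directly, which is stability by Proposition~\ref{equiv}. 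To salvage your route you would need either to add a compactness hypothesis (as the paper does where it genuinely needs the eigenfunction) or to rerun the contradiction with an almost-minimizing sequence for the Rayleigh quotient in place of an actual eigenfunction, which is delicate because the supersolution construction uses $\phi_1$ pointwise.
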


\begin{proof}
Suppose first that $0 \leq \lambda < \lambda^*$. In {\it Step 2.} of the proof of Theorem~\ref{existence}, we  construct a sequence $\{u_n\}_n$, with
$$u_n(x)<u_{n+1}(x)\quad\hbox{a.e.},$$ solutions of
$$\left\{\begin{array}{ll}
\displaystyle- \Delta_m u_n =\lambda \, f(u_{n-1}),\quad &\hbox{in} \ \Omega,
\\ \\
u_n=0,\quad &\hbox{on} \  \partial_m\Omega,
\end{array}\right.
$$
for $n\ge 1$,
where
$$\left\{\begin{array}{ll}
\displaystyle- \Delta_m u_0 =\lambda f(0),\quad &\hbox{in} \ \Omega,
\\ \\
u_0=0,\quad &\hbox{on} \  \partial_m\Omega,
\end{array}\right.
$$
 being their limit as $n\to+\infty$ the minimal solution $u:=u_\lambda$ of~$(P(\lambda f))$.  Observe that $u_n(x)<u(x)$ for $x\in\Omega$. Then,  for $v \in  L_0^2(\Omega_m, \nu)$, we have that,   for $\delta>0$ $$\frac{v^2}{u-u_n+ \delta }\in L_0^2(\Omega_m, \nu)$$
 and
\begin{equation}\label{alaalex01}\begin{array}{c}\displaystyle\frac12\int_{\Omega_m}\int_{\Omega_m}\nabla \left(u-u_n+ \delta \right)(x,y)\nabla\left(\frac{v^2}{u-u_n+ \delta }(x,y)\right)dm_x(y)d\nu(x)\\ \\
\displaystyle=\lambda\int_\Omega\frac{f(u)-f(u_{n-1})}{u-u_n+ \delta }v^2d\nu.
\end{array}
\end{equation}
Let us call $w_n=u_n-u+ \delta $,  then, applying~\eqref{Leib11dos},  the left hand side of~\eqref{alaalex01} can be written as
  \begin{equation}\label{alaalex03}\begin{array}{c}\displaystyle\frac12\int_{\Omega_m}\int_{\Omega_m}\nabla w_n(x,y)\nabla\left(\frac{v^2}{w_n }(x,y)\right)dm_x(y)d\nu(x) =\frac14\int_{\Omega_m}\int_{\Omega_m}\nabla w_n(x,y)\times\quad \quad\quad\\ \\
\displaystyle  \quad\times\frac{(v(y)+v(x))\nabla v(x,y)(w_n(y)+w_n(x))-(v(y)^2+v(x)^2)\nabla w_n(x,y)}{w_n(y)w_n(x)} dm_x(y)d\nu(x).
\end{array}
\end{equation}
Now, for non-negative numbers $a,b,c, d \in \R$, a simple calculation gives
$$\displaystyle
   (a-b) \frac{(c+d)(c-d)(a+b)-(c^2+d^2)(a-b)}{ab}
\displaystyle \le 2(c-d)^2.
$$
Hence, taking $a=w_n(y)$, $b=w_n(x)$, $c=v(y)$ and $d=v(x)$ in the last expression of~\eqref{alaalex03}, we obtain that
$$\begin{array}{c}\displaystyle
 \nabla w_n(x,y) \frac{(v(y)+v(x))\nabla v(x,y)(w_n(y)+w_n(x))-(v(y)^2+v(x)^2)\nabla w_n(x,y)}{w_n(y)w_n(x)}
\\ \\ \displaystyle  \le 2\left|\nabla v(x,y)\right|^2.
\end{array}$$
Therefore, from such inequality and~\eqref{alaalex03},
$$\displaystyle\frac12\int_{\Omega_m}\int_{\Omega_m}\nabla w_n(x,y)\nabla\left(\frac{v^2}{w_n }(x,y)\right)dm_x(y)d\nu(x) \leq  \frac{1}{2}\int_{\Omega_m}\int_{\Omega_m}\left|\nabla v(x,y)\right|^2dm_x(y)d\nu(x),$$
  and consequently, from~\eqref{alaalex01},  and using that $f(u_n)\ge f(u_{n-1})$,
$$
\begin{array}{c}\displaystyle\frac{1}{2}\int_{\Omega_m}\int_{\Omega_m}\left|\nabla v(x,y)\right|^2dm_x(y)d\nu(x)\ge \lambda\int_\Omega\frac{f(u)-f(u_{n-1})}{u-u_n+ \delta }v^2d\nu\\
\\ \displaystyle \ge   \lambda\int_\Omega\frac{f(u)-f(u_n)}{u-u_n+ \delta }v^2d\nu.
\end{array}
$$
Hence, applying Fatou's lemma twice, first taking limits as $\delta\to 0^+$ and afterwards as $n\to+\infty$,
$$\frac{1}{2}\int_{\Omega_m}\int_{\Omega_m}\left|\nabla v(x,y)\right|^2dm_x(y)d\nu(x)\ge \lambda\int_\Omega f'(u)v^2d\nu,$$
 and  by Proposition \ref{equiv} we have that  $u_\lambda$ is stable.

 Finally, if we assume the existence of $u^*$, by Proposition \ref{equiv} we have $Q_{u_\lambda}(v) \geq 0$ for all  $v \in L^\infty(\Omega_m, \nu) \cap L_0^2(\Omega_m, \nu)$ and  for $0 \leq \lambda <\lambda^*$. Then, by the Dominate Convergence Theorem, we have $Q_{u^*}(v) \geq 0$ for all  $v \in L^\infty(\Omega_m, \nu) \cap L_0^2(\Omega_m, \nu)$, and consequently $u^*$ is stable
\end{proof}

 The proof of the following result is similar to that for~\cite[Proposition 1.3.1]{D},  we give it for the sake of completeness.

\begin{proposition}\label{Uniq1} Let  $f \in C^1(\mathbb{R})$ be strictly convex. Then, there is a unique stable solution of problem $(P(\lambda f))$. Consequently,  if  $f \in C^1([0,+\infty))$ is strictly convex and satisfies~\eqref{H},    for  each $0\le \lambda  \leq \lambda^*$,  the minimal solution  $u_\lambda$ is the unique stable solution of problem $(P(\lambda f))$.
\end{proposition}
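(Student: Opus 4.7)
The plan is to adapt the classical stability argument of~\cite[Proposition 1.3.1]{D} to the nonlocal setting, combining a pointwise inequality for the positive part with the stability quadratic form from Proposition~\ref{equiv}.

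Let $u_1, u_2$ be two bounded stable solutions of $(P(\lambda f))$, and set $w := u_2 - u_1$ and $w_+ := (u_2 - u_1)^+$. Both lie in $L^\infty(\Omega_m,\nu) \cap L_0^2(\Omega_m,\nu)$, so $w_+$ is an admissible test function in Proposition~\ref{equiv}. The starting point is the elementary pointwise inequality
\begin{equation*}
(a^+ - b^+)^2 \le (a-b)(a^+ - b^+), \qquad a,b \in \mathbb{R},
\end{equation*}
proved by a short case analysis on the signs of $a$ and $b$. Taking $a = w(y)$ and $b = w(x)$, this translates into $|\nabla w_+(x,y)|^2 \le \nabla w(x,y)\, \nabla w_+(x,y)$ for all $x,y$. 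Integrating against $d(\nu \otimes m_x)$ and invoking the integration by parts formula together with $-\Delta_m w = \lambda(f(u_2) - f(u_1))$ yields
\begin{equation*}
\frac12 \int_{\Omega_m \times \Omega_m} |\nabla w_+(x,y)|^2 \, d(\nu \otimes m_x)(x,y) \le \lambda \int_\Omega (f(u_2) - f(u_1))\, w_+ \, d\nu.
\end{equation*}

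Next, strict convexity of $f$ yields, on the set $\{w_+ > 0\}$, the tangent inequality $f(u_1) > f(u_2) + f'(u_2)(u_1 - u_2)$, which rearranges to $f(u_2) - f(u_1) < f'(u_2)\,w_+$; at points where $w_+ = 0$ the next bound is trivial. Integrating,
\begin{equation*}
\int_\Omega (f(u_2) - f(u_1))\, w_+\, d\nu \le \int_\Omega f'(u_2)\, w_+^2\, d\nu,
\end{equation*}
with strict inequality as soon as $\nu(\{w_+ > 0\}) > 0$. On the other hand, the stability of $u_2$ applied to the test $v = w_+$ gives, through Proposition~\ref{equiv}, the reverse non-strict bound
\begin{equation*}
\tfrac12 \int_{\Omega_m \times \Omega_m} |\nabla w_+|^2 \, d(\nu \otimes m_x) \ge \lambda \int_\Omega f'(u_2)\, w_+^2\, d\nu.
\end{equation*}
Chaining the three estimates forces $\nu(\{w_+ > 0\}) = 0$, i.e.\ $u_2 \le u_1$ $\nu$-a.e. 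Exchanging the roles of $u_1$ and $u_2$ and repeating the argument with test function $(u_1 - u_2)^+$ and the stability of $u_1$ yields $u_1 \le u_2$, hence $u_1 = u_2$.

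The second assertion then follows at once: under the additional hypotheses, Theorem~\ref{megusta} (and Theorem~\ref{extremal} at the endpoint $\lambda = \lambda^*$) already provides stability of the minimal bounded solution $u_\lambda$ for each $0 \le \lambda \le \lambda^*$, so the uniqueness just established forces it to be the only stable solution. The single nonlocal ingredient that the proof really requires is the one-sided inequality $|\nabla w_+|^2 \le \nabla w\,\nabla w_+$: in the local setting one has the identity $\nabla w \cdot \nabla w^+ = |\nabla w^+|^2$, whereas here only the inequality is available, but it is precisely enough to close the argument.
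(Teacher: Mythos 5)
Your proposal is correct and follows essentially the same route as the paper: test the difference equation with $w^+=(u_2-u_1)^+$, invoke the stability of $u_2$ via Proposition~\ref{equiv}, and use the strict tangent-line inequality to force $\nu(\{u_2>u_1\})=0$, then symmetrize. The only (welcome) difference is that you make explicit the pointwise inequality $(a^+-b^+)^2\le (a-b)(a^+-b^+)$ underlying the step $\frac12\int|\nabla w^+|^2\le \lambda\int (f(u_2)-f(u_1))w^+\,d\nu$, which the paper uses implicitly (it appears in its proof of the Maximum Principle).
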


\begin{proof} Suppose there exist two stable solution $u_1$, $u_2$ of problem $(P(\lambda f))$. Then, $w:= u_2 - u_1$ is solution of
$$- \Delta_m w = \lambda (f(u_2) - f(u_1)) \quad \hbox{in} \ \Omega.$$
Multiplying the above equality by $w^+$ and integrating by parts, we obtain
$$\frac12\int_\Omega \vert \nabla w^+ \vert^2 d\nu   \le \lambda \int_\Omega (f(u_2) - f(u_1)) w^+ d\nu.$$
On the other hand, since $u_2$ is stable, we have
$$\frac12\int_\Omega \vert \nabla w^+ \vert^2 d\nu \geq  \lambda \int_\Omega f'(u_2)(w^+)^2 d\nu.$$
Hence
$$\int_\Omega \left(f(u_2) - f(u_1)  -  f'(u_2)w^+ \right) w^+ d\nu \geq 0.$$
Then,
$$\int_{\{u_2>u_1\}} \left(f(u_2) - f(u_1)  -  f'(u_2)(u_2-u_1)\right) (u_2-u_1) d\nu \geq 0.$$
 Then,
\begin{equation}\label{1323}\int_{\{u_2>u_1\}} \left(f(u_2) - f(u_1)  -  f'(u_2)(u_2-u_1)\right) (u_2-u_1) d\nu \geq 0.
\end{equation}
Now, since $f$ is strictly convex, we have $f(u_2) - f(u_1)  -  f'(u_2)(u_2-u_1)>0$ if $u_2>u_1$. Hence~\eqref{1323} implies that $\{u_2>u_1\}$ is $\nu$-null, so $u_2\le u_1$.
The  reverse inequality is obtained by exchanging the role of $u_1$ and $u_2$.
\end{proof}

 Following the example in~\cite[Proposition 1.3.3]{D},  we   see that the convexity assumption cannot be dropped in Proposition \ref{Uniq1}.

\begin{example}\rm Consider the Allen-Cahn nonlinearity $f(s)= s -s^3$ and consider the problem
 \begin{equation}\label{ACProb}\left\{\begin{array}{ll}
\displaystyle- \Delta_m u =\lambda \, (u -u^3),\quad &\hbox{in} \ \Omega,
\\ \\
u=0,\quad &\hbox{on} \  \partial_m\Omega.
\end{array}\right.
\end{equation}
Obviously, $\underline{u} =0$ is a solution of \eqref{ACProb}.
Now,
$$Q_u(v)= \displaystyle\frac{1}{2} \int_{\Omega_m \times \Omega_m} \vert \nabla v (x,y) \vert^2 d(\nu\otimes m_x)(x,y)- \lambda\displaystyle\int_\Omega (1 -3u(x)^2) v^2(x) d\nu(x).$$
Thus,
$$Q_{\underline{u}}(v) = \displaystyle\frac{1}{2} \int_{\Omega_m \times \Omega_m} \vert \nabla v (x,y) \vert^2 d(\nu\otimes m_x)(x,y) - \lambda\displaystyle\int_\Omega  v^2(x) d\nu(x) \ge 0$$ if and only if $$ 0\leq \lambda \leq \lambda_m(\Omega),$$
and then
$ {\underline{u}}$ is a stable solution of problem~\eqref{ACProb} for $0\leq \lambda \leq \lambda_m(\Omega).$
Moreover, the same proof given for \cite[Proposition 1.3.3]{D} shows that the energy functional
$$\mathcal{E}_{\lambda f}(w):= \frac{1}{4}  \int_{\Omega_m \times \Omega_m} \vert \nabla w(x,y) \vert^2 d(\nu\otimes m_x)(x,y) + \frac{\lambda}{4}\int_{\Omega} \left(w^2-1\right)^2 d\nu(x)$$
is strictly convex, for $0\leq \lambda \leq \lambda_m(\Omega),$ hence we have that $\underline{u} =0$ is the unique stable solution of problem \eqref{ACProb}.

Suppose $u$ is a solution of problem \eqref{ACProb} for $\lambda > \lambda_m(\Omega)$. Now,
$$Q_{u}(v) =  \displaystyle\frac{1}{2} \int_{\Omega_m \times \Omega_m} \vert \nabla v (x,y) \vert^2 d(\nu\otimes m_x)(x,y)  - \lambda\displaystyle\int_\Omega \left(1-3u(x)^2 \right) v^2(x) d\nu(x) $$ $$=  \displaystyle\frac{1}{2} \int_{\Omega_m \times \Omega_m} \vert \nabla v (x,y) \vert^2 d(\nu\otimes m_x)(x,y)  - \lambda\displaystyle\int_\Omega v^2(x) d\nu(x) + 3\lambda \displaystyle\int_\Omega  u(x)^2 v^2(x) d\nu(x) \geq 0,$$
by the $2$-Poincar\'{e} inequality. Thus,
$ u \ \hbox{is stable } \ \forall \, \lambda > \lambda_m(\Omega).$
And, therefore $u \not= \underline{u},$ i.e.,
$$u\not= 0.$$ Then, since $-u$ is also a solution, it will be stable and we have that there will exist at least two nontrivial stable solutions.

  Let us  show that there exists examples for which there is a solution of problem \eqref{ACProb} for some $\lambda> \lambda_m(\Omega)$.

We define the function
$$\overline{u}(x):= \left\{ \begin{array}{ll}  1 \quad &\hbox{if} \ \ x \in \Omega \\[10pt] 0 \quad &\hbox{if} \ \ x \in \partial_m\Omega. \end{array} \right.$$
 For $x \in \Omega$, we have
$$- \Delta_m \overline{u}(x) = 1- \int_\Omega  \overline{u}(y) dm_x(y) = 1- m_x(\Omega) \geq 0 =\lambda \, ( \overline{u}(x) - \overline{u}(x)^3).$$
Hence, $\overline{u}$ is a supersolution of problem \eqref{ACProb}.

By Theorem \ref{Truncat}, if
$$\h_\lambda(x,r):= \left\{ \begin{array}{lll} \lambda f(\underline{u}(x)) \quad &\hbox{if} \ \ r < \underline{u}(x), \\[10pt] \lambda f(r) \quad &\hbox{if} \ \ \underline{u}(x) \leq r \leq \overline{u}(x) \\[10pt]  \lambda f(\overline{u}(x)) \quad &\hbox{if} \ \ r >\overline{u}(x), \end{array} \right.$$
there is a solution $w_\lambda$ of problem  $(P(\h_\lambda))$. In this case, for $x \in \Omega$, we have
$$\h_\lambda(x,r):= \left\{ \begin{array}{lll}0 \quad &\hbox{if} \ \ r < 0, \\[10pt]  \lambda(r-r^3) \quad &\hbox{if} \ \ 0 \leq r \leq 1 \\[10pt]  0 \quad &\hbox{if} \ \ r > 1. \end{array} \right.$$
Now,
$$\left\{\begin{array}{ll} - \Delta_m w_\lambda (x) = \h_\lambda(x,w_\lambda(x)) \geq 0\,\quad &\hbox{if} \ x \in \Omega,
\\ \\
w_\lambda=0,\quad &\hbox{on} \  \partial_m\Omega,\end{array} \right.$$
Then, by the Maximum Principle, $$w_\lambda \geq 0.$$
On the other hand, if $v:= \overline{u} - w_\lambda$, for $x \in \Omega$, we have
$$- \Delta_m v(x) = 1 - m_x(\Omega) + \Delta_m w_\lambda(x) = 1 - m_x(\Omega)- \h_\lambda (x,w_\lambda(x)).
$$
Hence
$$- \Delta_m v(x) \geq 0 \iff \h_\lambda (x,w_\lambda(x)) \leq 1 - m_x(\Omega).$$
Then, since
$$\h_\lambda (x,w_\lambda(x)) \leq \frac{2}{3\sqrt{3}} \lambda,$$
we have that
\begin{equation}\label{condOmega1}- \Delta_m v(x) \geq 0 \quad \hbox{if} \ \ m_x(\Omega) \leq 1- \frac{2 \lambda}{3\sqrt{3}} \ \ \forall x \in \Omega.
\end{equation}
Then, under the condition \eqref{condOmega1}, applying the Maximum Principle, we get $w_\lambda \leq \overline{u}$. Consequently, under the condition \eqref{condOmega1}, we have $\underline{u} \leq w_\lambda \leq \overline{u}$, and therefore $w_\lambda$ is a solution of  of problem \eqref{ACProb}.

Finally, let see   examples of random walk spaces that satisfies the condition \eqref{condOmega1} for some $\lambda> \lambda_m(\Omega)$.
 Let $G=(V,E)$ be the  graph $V= \{1,2,3,4\}$, with weights
 $$w_{1,2}=w_{34} = b>0,\  w_{23}=a>0,$$ and $w_{i,j} = 0$ otherwise (see Figure~\ref{fig03}), and let $\Omega:= \{2,3\}$. We have, $\lambda_m(\Omega) = \frac{b}{a+b}$ and
 $m_2(\Omega) =  m_2(\Omega) = \frac{a}{a+b}$. Then, for $\lambda > \frac{b}{a+b}$, we need to find $a,b$ satisfying
$$\frac{a}{a+b} \leq 1- \frac{2 \lambda}{3\sqrt{3}},$$
which is equivalent to
$$ \lambda \le \frac{3\sqrt3}{2}\frac{b}{a+b}.$$
Then we need
$$\frac{b}{a+b} < \lambda \leq   \frac{3\sqrt{3}}{2}  \frac{b}{a+b},$$
and this is true for any $a,b>0$, and many $\lambda > \lambda_m(\Omega)=\frac{b}{a+b}$. \hfill$\blacksquare$
\end{example}

\begin{remark} {\rm  Let $g \in C^1(\R)$.
Observe that   $u$ is a  bounded stable solution of  problem $(P(g))$ if and only if
$$\mu_1(u) : =\inf_{v \in L_0^2(\Omega_m, \nu),   \Vert v \Vert_2 =1} \displaystyle\left(\frac{1}{2} \int_{\Omega_m \times \Omega_m} \vert \nabla v (x,y) \vert^2 d(\nu\otimes m_x)(x,y)- \displaystyle\int_\Omega g'(u) v^2 d\nu\right) \geq 0.$$

Moreover, by Remark \ref{positive test}, we can take this infimum for $v\geq 0$.

Let $u$ be a  bounded solution of  problem $(P(g))$ and  $L_u$  the linearized operator around $u$ defined as
$$L_u(v) := -\Delta_m v -  g'(u)v, \quad v \in L_0^2(\Omega_m, \nu),$$
and consider the eigenvalue problem
\begin{equation}\label{eigenproblemLinear}\left\{\begin{array}{l}
\displaystyle L_u v(x) =\mu v(x),\quad \nu\hbox{-a.e.} \ x\in\Omega,
\\ \\
v(x)=0,\quad x\in\partial_m\Omega.
\end{array}\right.
\end{equation}
If there exists $ \varphi\in L_0^2(\Omega_m, \nu)$, $ \varphi \not=0$, solution of \eqref{eigenproblemLinear}, we say that $\varphi$ is an eigenfunction and $\mu$ an eigenvalue of problem \eqref{eigenproblemLinear}. We will denote by $\lambda_1(- \Delta_m -g'(u))$  and by $\varphi_1$  the smallest eigenvalue  and eigenfunction  of  problem \eqref{eigenproblemLinear}.

In the case that  $\lambda_1(- \Delta_m -g'(u))$ exists, then $\lambda_1(- \Delta_m -g'(u)) = \mu_1(u)$, and we have
\begin{equation}\label{eigent1}
\hbox{$u$ is stable} \iff \lambda_1(- \Delta_m -g'(u)) \geq 0.
\end{equation}

   We have that $\lambda_1(- \Delta_m -g'(u))$ exists for finite weighted graphs, but, in general we do not know if it exists.

 Let us see that, under certain assumptions, $\lambda_1(- \Delta_m -g'(u))$ exists.  Since $L_u$ is a self-adjoint operator, by \cite[Proposition 6.9]{BrezisAF}, we have
 $$m:= \inf_{v \in L_0^2(\Omega_m, \nu),  \Vert v \Vert_2 =1} \int_{\Omega_m} L_u(v) v d\nu \in \sigma(L_u),$$
and  by integration by parts, we have $m = \mu_1(u)$, thus $$\mu_1(u) \in \sigma(L_u).$$
Now, we have $L_u(v) = Kv - v - g'(u) v$, with
$$Kv(x):= \int_{\Omega_m} v(y) dm_x(y).$$
So, assuming that
\\
1.   $g'(u)\ge c>0$  (or  $g'(u)\le c<0$), which implies that  the operator $v \mapsto g'(u) v$ is invertible,
\\
2. $K$ is a compact operator in  $L_0^2(\Omega_m, \nu)$,
\\
3. and $\mu_1(u) \not=0$,
\\
then,  by  Fredholm’s alternative, we have $\mu_1(u) \in \sigma_p(L_u)$, and consequently $$\exists \,  \lambda_1(- \Delta_m -g'(u))=\mu_1(u).$$
Remark that the above assumption 2. holds true for  finite weighted graphs and  for the random walk space $m^J$ given in Example \ref{example.nonlocalJ}.
\hfill$\blacksquare$
}
\end{remark}

 As in the local case (see for instance \cite{CEP}) we have the following result. The proof is similar, we include it for the sake of completeness.

\begin{proposition}\label{good1}  Assume that  $f \in C^1([0,\infty[)$ is a convex  or concave function  satisfying~\eqref{H}.  Let $0<\lambda < \lambda^*$. If $u$ is a stable bounded solution of problem $(P(\lambda f))$  and $\lambda_1(- \Delta_m -\lambda f'(u))$ exists,  then
$$\lambda_1(- \Delta_m -\lambda f'(u)) > 0.$$
\end{proposition}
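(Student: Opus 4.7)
I would argue by contradiction. Suppose that $\lambda_1:=\lambda_1(-\Delta_m-\lambda f'(u))=0$; then there exists an eigenfunction $\varphi_1\in L_0^2(\Omega_m,\nu)$, non-negative and not identically zero (see Remark~\ref{positive test}), satisfying
\[
-\Delta_m\varphi_1=\lambda f'(u)\,\varphi_1\ \text{ in }\Omega,\qquad \varphi_1=0\ \text{ on }\partial_m\Omega.
\]
The plan is to construct a bounded solution $u_\mu$ of $(P(\mu f))$ for a $\mu$ close to $\lambda$, compare it with $u$, and derive a contradiction by testing the equation for $w=u_\mu-u$ (or $w=u-u_\mu$) against $\varphi_1$ and using the reversibility-based integration by parts, so that $-\Delta_m$ passes to $\varphi_1$ and the eigenvalue equation absorbs the terms containing $f'(u)w$.

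\emph{Convex case.} Pick $\mu\in(\lambda,\lambda^*)$, which is possible by the hypothesis $\lambda<\lambda^*$. Since $-\Delta_m u=\lambda f(u)\le \mu f(u)$, the function $u$ is a subsolution of $(P(\mu f))$, and the monotone iteration from the proof of Theorem~\ref{existence}, anchored at $u$ and controlled above by a supersolution of $(P(\mu f))$ dominating $u$ (e.g.\ $u_\mu^{\rm min}$ when $u=u_\lambda$), produces a bounded solution $u_\mu\ge u$ of $(P(\mu f))$. Setting $w:=u_\mu-u\ge 0$ and using the convexity inequality $f(u_\mu)-f(u)\ge f'(u)\,w$,
\[
-\Delta_m w=(\mu-\lambda)f(u_\mu)+\lambda\bigl(f(u_\mu)-f(u)\bigr)\ge (\mu-\lambda)f(u_\mu)+\lambda f'(u)\,w.
\]
Testing against $\varphi_1$ and integrating by parts, the $\lambda\int_\Omega f'(u)w\,\varphi_1\,d\nu$ terms appear on both sides and cancel, leaving $0\ge (\mu-\lambda)\int_\Omega f(u_\mu)\,\varphi_1\,d\nu>0$, which contradicts $\mu>\lambda$, $f(u_\mu)\ge f(0)>0$ and $\varphi_1\not\equiv 0$.

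\emph{Concave case.} Pick $\mu\in(0,\lambda)$ and let $u_\mu$ be the minimal solution of $(P(\mu f))$; since $-\Delta_m u=\lambda f(u)\ge\mu f(u)$, $u$ is a supersolution, so the iterative construction of $u_\mu$ forces $u_\mu\le u$. With $w:=u-u_\mu\ge 0$ and the concavity inequality $f(u)-f(u_\mu)\ge f'(u)\,w$, the same test against $\varphi_1$ gives $(\lambda-\mu)\int_\Omega f'(u)\,w\,\varphi_1\,d\nu\ge (\lambda-\mu)\int_\Omega f(u)\,\varphi_1\,d\nu$, hence
\[
\int_\Omega f'(u)\,w\,\varphi_1\,d\nu\ge \int_\Omega f(u)\,\varphi_1\,d\nu.
\]
On the other hand, $w\le u$ combined with the concave tangent inequality $uf'(u)\le f(u)-f(0)$ (evaluate the tangent to $f$ at $u$ at the point $0$) yields
\[
\int_\Omega f'(u)\,w\,\varphi_1\,d\nu\le \int_\Omega f(u)\,\varphi_1\,d\nu-f(0)\int_\Omega\varphi_1\,d\nu<\int_\Omega f(u)\,\varphi_1\,d\nu,
\]
contradicting the previous display. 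The most delicate point I anticipate is securing the comparison $u_\mu\ge u$ in the convex case when $u$ is a general (non-minimal) stable solution; for $u=u_\lambda$ it is immediate from Theorem~\ref{existence}(c), and in general one needs to run the sub-/super-solution iteration anchored at $u$ and check that for $\mu$ sufficiently close to $\lambda$ the iterates stay bounded, after which the rest of the argument is a routine tangent-line manipulation.
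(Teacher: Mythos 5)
Your proof is correct and follows essentially the same strategy as the paper: pair the equation for a bounded solution of $(P(\mu f))$ with the eigenfunction $\varphi_1$ of the linearized operator, and let convexity (resp.\ concavity) force a sign contradiction when $\mu>\lambda$ (resp.\ $\mu<\lambda$); your computation for $w=u_\mu-u$ is, after rearrangement, exactly the paper's identity \eqref{E5good1}. The one point you flag as delicate --- producing a solution $u_\mu\ge u$ of $(P(\mu f))$ when $u$ is a general, possibly non-minimal, stable solution --- is in fact not needed: nothing in your argument uses the sign of $w$. The convexity inequality $f(u_\mu)-f(u)\ge f'(u)(u_\mu-u)$ holds regardless of that sign, the cancellation of the terms $\lambda\int_\Omega f'(u)\,w\,\varphi_1\,d\nu$ is an identity coming from integration by parts, and the final contradiction $0\ge(\mu-\lambda)\int_\Omega f(u_\mu)\varphi_1\,d\nu>0$ only requires $u_\mu\ge 0$ (maximum principle), $f(0)>0$, and $\varphi_1\ge 0$, $\varphi_1\not\equiv 0$. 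So you may simply take $u_\mu$ to be the minimal bounded solution of $(P(\mu f))$ for some $\mu\in(\lambda,\lambda^*)$, whose existence is Theorem~\ref{existence}(b), with no comparison to $u$; this is what the paper does. Your concave case is likewise valid but more roundabout than necessary: the same identity with $\mu<\lambda$ already pits a nonnegative left-hand side against a strictly negative right-hand side, without the tangent-line estimate $u f'(u)\le f(u)-f(0)$ or the bound $w\le u$.
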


\begin{proof}   Assume by contradiction that there exists $0<\lambda < \lambda^*$ and $u$   a bounded  stable solution of problem $(P(\lambda f))$ such that
$$\lambda_1(- \Delta_m -\lambda f'(u)) = 0,$$
(by \eqref{eigent1}, we have $\lambda_1(- \Delta_m -\lambda f'(u)) \geq  0$).
Then,   there exists $\varphi_1 \not=0$, such that
\begin{equation}\label{E1good1}
- \Delta_m \varphi_1  = \lambda f'(u) \varphi_1.
\end{equation}
We have $$Q_{u}(\varphi_1) = \inf_{v \in L_0^2(\Omega_m, \nu),   \Vert v \Vert_2 =1} Q_{u}(v).$$ And, since $(a-b)^2 \geq (\vert a \vert - \vert b \vert)^2$, we can assume that  $\varphi_1  \geq 0$.

On the other hand, we have
\begin{equation}\label{E2good1}
- \Delta_m u = \lambda f(u).
\end{equation}
Multiplying \eqref{E1good1} by $u$, \eqref{E2good1} by $\varphi_1$,  integrating by parts, and substracting the obtained equations, we get
\begin{equation}\label{E3good1}
\int_\Omega \left( \lambda f(u) - \lambda \, u f'(u)  \right) \varphi_1 \, d\nu =0.
\end{equation}
Similarly, if $\mu \in (0,\lambda^*)$, and $w$ is a bounded solution of problem  $(P(\mu f))$, we get
\begin{equation}\label{E4good1}
\int_\Omega \left( \mu f(w) - \lambda \, w f'(u)  \right) \varphi_1 \, d\nu =0.
\end{equation}
Subtracting \eqref{E3good1} from \eqref{E4good1}, we deduce that
\begin{equation}\label{E5good1}
\lambda \int_\Omega \left(f(u) - f(w) + (w - u)f'(u)\right) \varphi_1 \, d\nu = (\mu- \lambda) \int_\Omega f(w)  \varphi_1 \, d\nu.
\end{equation}
  If   $f$ is convex, we have that $f(u)+f'(u)(w-u)\le f(w)$, hence the left-hand side of \eqref{E5good1} is  non-positive and we get a contradiction with the sign of the right-hand side by choosing $\mu > \lambda$.  In an analogous way, if $f$ is concave, a contradiction is reached by taking $\mu < \lambda$.
\end{proof}

  Next, we prove that the branch of minimal solutions of $(P(\lambda f))$ is a continuum. To prove that we need the following result.
\begin{lemma}\label{Existence2}  Let $f \in C^1([0,\infty[)$  and $0<\lambda < \lambda^*$.
Assume that there exists $\lambda_1(- \Delta_m -\lambda f'(u_\lambda)) > 0$. Then,  for every $\varphi \in L^2(\Omega, \nu)$,
there is a solution  of problem
\begin{equation}\label{Dirichlet2}
\left\{\begin{array}{ll}
-\Delta_m u -  \lambda f'(u_\lambda)u  =\varphi&\hbox{in } \Omega,\\[10pt]
u(x)=0&\hbox{on }\partial_m\Omega.
\end{array}\right.
\end{equation}
\end{lemma}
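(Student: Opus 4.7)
The plan is to argue by the direct method of the calculus of variations, exactly in the spirit of the proof of Theorem~\ref{EUp}, but adapted to the linear problem at hand. Since $u_\lambda \in L^\infty(\Omega_m,\nu)$ and $f \in C^1([0,\infty[)$, the coefficient $a(x) := \lambda f'(u_\lambda(x))$ is a bounded $\nu$-measurable function on $\Omega$. Consider the quadratic functional
\begin{equation}
\mathcal{F}(u) := \frac{1}{4}\int_{\Omega_m\times\Omega_m} |\nabla u(x,y)|^2\, d(\nu\otimes m_x)(x,y) - \frac{1}{2}\int_\Omega a(x)\, u(x)^2\, d\nu(x) - \int_\Omega \varphi(x)\, u(x)\, d\nu(x)
\end{equation}
on $L^2_0(\Omega_m,\nu)$. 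This is the natural energy whose Euler--Lagrange equation, computed exactly as in the passage before Lemma~\ref{EnerSol} (the linear perturbation $-\tfrac{1}{2}\int a u^2$ contributes $-\int a u v$ to the derivative), is precisely \eqref{Dirichlet2}.

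The key step is coercivity, and this is where the hypothesis $\lambda_1 := \lambda_1(-\Delta_m - \lambda f'(u_\lambda)) > 0$ enters. By the variational characterization of $\lambda_1$ and a scaling argument, one has
\begin{equation}
\frac{1}{2}\int_{\Omega_m\times\Omega_m} |\nabla v|^2\, d(\nu\otimes m_x) - \int_\Omega a\, v^2\, d\nu \;\geq\; \lambda_1 \int_\Omega v^2\, d\nu
\end{equation}
for every $v \in L^2_0(\Omega_m,\nu)$. Multiplying by $1/2$ and combining with Young's inequality on the linear term $\int \varphi u$, one obtains a bound of the form
\begin{equation}
\mathcal{F}(u) \;\geq\; \frac{\lambda_1}{4} \int_\Omega u^2\, d\nu - C\|\varphi\|_2^2,
\end{equation}
which gives both an $L^2(\Omega,\nu)$ bound along a minimizing sequence and, by the same computation done inside $\mathcal{F}$, a uniform bound on $\tfrac{1}{2}\iint |\nabla u_n|^2\, d(\nu\otimes m_x)$.

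From here the rest is standard Hilbert space machinery: extracting a weakly convergent subsequence in $L^2_0(\Omega_m,\nu)$, using strict convexity of the quadratic form (guaranteed by $\lambda_1 > 0$) and weak lower semicontinuity of $\mathcal{F}$ (the gradient term is convex and continuous; the remaining terms are weakly continuous since $a \in L^\infty$ and $\varphi \in L^2$) to conclude that the minimum is attained at a unique $u \in L^2_0(\Omega_m,\nu)$. The critical point equation $\langle D\mathcal{F}(u), v\rangle = 0$ for all $v \in L^2_0(\Omega_m,\nu)$, followed by the $m$-integration by parts formula already used in the proof of Theorem~\ref{EUp}, yields that $u$ solves \eqref{Dirichlet2}. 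The main obstacle is just the coercivity estimate, which hinges on the strict positivity of $\lambda_1$; once this is in hand, the construction essentially mirrors that of Theorem~\ref{EUp}, and no new functional-analytic ingredients are needed (uniqueness also comes for free from strict convexity, though the statement asks only for existence).
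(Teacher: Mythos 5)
Your proof is correct and follows essentially the same route as the paper: minimize the same quadratic functional $\tilde{\mathcal{E}}_\varphi$, use the strict positivity of $\lambda_1(-\Delta_m-\lambda f'(u_\lambda))$ to get coercivity and an $L^2$ bound on a minimizing sequence via Young's inequality, then pass to a weak limit and identify the minimizer as a critical point, i.e.\ a solution of \eqref{Dirichlet2}. The only minor imprecision is the parenthetical claim that the term $\int_\Omega a\,u^2\,d\nu$ is ``weakly continuous'' (it is not in general for weakly convergent $L^2$ sequences); the correct justification for weak lower semicontinuity, which you also invoke, is that the full quadratic form is nonnegative, hence convex, precisely because $\lambda_1>0$.
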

\begin{proof}
Set the energy functional associated with problem \eqref{Dirichlet2}
$$
\tilde{ \mathcal{E}}_{\varphi}(u):= \frac{1}{4 } \int_{\Omega_m \times \Omega_m} \vert \nabla u(x,y) \vert^2 d(\nu\otimes m_x)(x,y)-\frac{1}{2} \int_\Omega \lambda f'(u_\lambda) u^2 d\nu-\int_\Omega \varphi \, u\,d\nu ,
$$

Taking in account that $\lambda_1:=\lambda_1(- \Delta_m -\lambda f'(u_\lambda)) > 0$, then
  \begin{equation}\label{a1}
   0< \lambda_1\, \int_\Omega \left|v(x) \right|^2 \, {d\nu(x)} \leq   \displaystyle \frac{1}{2} \int_{\Omega_m \times \Omega_m} \vert \nabla v (x,y) \vert^2 d(\nu\otimes m_x)(x,y)- \displaystyle\int_\Omega \lambda f'(u_\lambda) v^2 d\nu,
        \end{equation}
         for every $v \in L_0^2(\Omega_m, \nu)$.

Set $$\theta := \inf_{h \in L_0^2(\Omega_m, \nu) \setminus \{ 0 \}} \tilde{ \mathcal{E}}_{\varphi}(h),$$ and let $\{ u_n \}$ be a minimizing sequence, hence
$$\theta = \lim_{n \to \infty}\tilde{ \mathcal{E}}_{\varphi}(u_n).$$
 By \eqref{a1}, we have
 $$ 0< \lambda_1\, \int_\Omega \left|u_n(x) \right|^2 \, {d\nu(x)} \leq  2\tilde{ \mathcal{E}}_{\varphi}(u_n)+2\int_\Omega \varphi \, u_n d\nu.$$
 Then, by Young's inequality, we have $\{ u_n \ : \ n \in \N \}$ is bounded in $L_0^2(\Omega_m, \nu)$. Hence, up to a subsequence, we have
$$u_n \rightharpoonup u \ \ \ \hbox{in }   L_0^2(\Omega_m, \nu).$$
Furthermore, using the weak lower semicontinuity of the functional $\tilde{ \mathcal{E}}_{\varphi}$, we get
$$\tilde{ \mathcal{E}}_{\varphi}(u) = \theta,$$
so $u$ is a critical point of $\tilde{ \mathcal{E}}_{\varphi}$, which implies that $u$ is solution  of problem
\eqref{Dirichlet2}.
\end{proof}

\begin{theorem}\label{minimal-continua}
Assume that $f \in C^1([0,\infty[)$ is convex and satisfies hypotheses~\eqref{H}.   Let $\{u_\lambda \}_{0\leq \lambda<\lambda^*}$ be the branch of minimal solutions of $(P(\lambda f))$.  Assume also that, for each $0<\lambda<\lambda^*$,  $\lambda_1(- \Delta_m -\lambda f'(u))$ exists. Then:

\item  {\rm (a)} The mapping $\lambda \to \|u_\lambda\|_2$ is  continuous in $[0,\lambda^*[$.

\item  {\rm (b)}  In  finite weighted graphs, the mapping $\lambda \to \|u_\lambda\|_\infty$ is  continuous in $[0,\lambda^*[$.

\end{theorem}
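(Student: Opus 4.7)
The plan is to combine the monotonicity of the branch $\lambda\mapsto u_\lambda$ (Theorem~\ref{existence}(c)) with a local application of the implicit function theorem at each $\lambda_0\in[0,\lambda^*)$. For left-continuity, monotonicity alone suffices: if $\lambda_n\uparrow\lambda_0$ then $u_{\lambda_n}$ is an increasing sequence dominated by $u_{\lambda_0}$, so by dominated convergence it converges pointwise and in $L^2(\Omega,\nu)$ to some $w\le u_{\lambda_0}$; passing to the limit in $-\Delta_m u_{\lambda_n}=\lambda_n f(u_{\lambda_n})$ shows that $w$ is a bounded solution of $(P(\lambda_0 f))$, and minimality of $u_{\lambda_0}$ forces $w=u_{\lambda_0}$.

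For right-continuity I would view $F(\lambda,u):=-\Delta_m u-\lambda f(u)$ as a $C^1$ map from $\R\times L_0^2(\Omega_m,\nu)$ into $L^2(\Omega,\nu)$ (with $f$ truncated outside an $L^\infty$-neighborhood of $u_{\lambda_0}$, which changes nothing locally). Its partial derivative at $(\lambda_0,u_{\lambda_0})$ is the linearized operator $Lv=-\Delta_m v-\lambda_0 f'(u_{\lambda_0})v$. By Theorem~\ref{megusta}, $u_{\lambda_0}$ is stable; the assumption that $\lambda_1(-\Delta_m-\lambda_0 f'(u_{\lambda_0}))$ exists together with Proposition~\ref{good1} then yields $\lambda_1(-\Delta_m-\lambda_0 f'(u_{\lambda_0}))>0$. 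Combining this strict positivity with Lemma~\ref{Existence2} shows $L$ is bijective from $L_0^2(\Omega_m,\nu)$ onto $L^2(\Omega,\nu)$ (surjectivity from Lemma~\ref{Existence2}, injectivity from $\lambda_1>0$), hence a Banach-space isomorphism by the open mapping theorem. The implicit function theorem then produces $\epsilon>0$ and a continuous curve $\lambda\mapsto\tilde u_\lambda$ on $(\lambda_0-\epsilon,\lambda_0+\epsilon)\cap[0,\lambda^*)$ with $\tilde u_{\lambda_0}=u_{\lambda_0}$ and $F(\lambda,\tilde u_\lambda)=0$. A short bootstrap based on the iteration used in Step~2 of the proof of Theorem~\ref{existence}, started from the bounded datum $\tilde u_{\lambda_0}=u_{\lambda_0}$, upgrades $\tilde u_\lambda$ to a nonnegative $L^\infty$-bounded solution of $(P(\lambda f))$ for $\lambda$ near $\lambda_0$. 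Minimality of $u_\lambda$ then gives $u_{\lambda_0}\le u_\lambda\le\tilde u_\lambda$ on $(\lambda_0,\lambda_0+\epsilon)$, and letting $\lambda\downarrow\lambda_0$ in the $L^2$-convergence $\tilde u_\lambda\to u_{\lambda_0}$ closes~(a). Assertion~(b) is then immediate: on a finite weighted graph $L_0^2(\Omega_m,\nu_G)$ is finite dimensional, so $\|\cdot\|_2$ and $\|\cdot\|_\infty$ are equivalent and the $L^2$-continuity from~(a) transfers to $L^\infty$.

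I expect the main technical obstacle to be the promotion of the IFT branch $\tilde u_\lambda$ from mere $L_0^2(\Omega_m,\nu)$-continuity to a uniform $L^\infty$-bound on a neighborhood of $\lambda_0$, which is what the minimality sandwich requires. In the finite-graph setting this is automatic from finite dimensionality, but in a general reversible random walk space one must control $\|\tilde u_\lambda\|_\infty$ by iterating the fixed-point equation, using the boundedness of $u_{\lambda_0}$ and the uniform continuity of $f$ on the relevant range; once that uniform bound is in hand, everything else reduces to routine monotonicity and comparison.
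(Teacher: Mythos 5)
Your left-continuity step (monotone increasing approximations, dominated convergence, minimality) is exactly the paper's. For right-continuity you take a genuinely different route. The paper does not use the implicit function theorem: after getting $\lambda_1(-\Delta_m-\lambda f'(u_\lambda))>0$ from Theorem~\ref{megusta} and Proposition~\ref{good1}, it invokes Lemma~\ref{Existence2} to produce a bounded $\phi$ with $-\Delta_m\phi-\lambda f'(u_\lambda)\phi=\1_\Omega$, and then checks by a direct mean-value computation that $v_\delta:=u_\lambda+\delta\phi$ is a supersolution of $(P((\lambda+\theta)f))$ for $\theta=\theta(\delta)$ small. Minimality gives $u_{\lambda+\theta}\le v_\delta$, hence the decreasing limit $w$ of $u_{\lambda_n}$ satisfies $u_\lambda\le w\le v_\delta$, and $\delta\to0$ closes the argument. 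This Cazenave--Escobedo--Pozio-style perturbation needs only the pointwise supersolution inequality and $\phi\in L^\infty$, and in particular never differentiates the map $u\mapsto f(u)$ between Lebesgue spaces.

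That is where your route has a genuine gap for part (a). In a general random walk space $\nu$ may be non-atomic (e.g.\ Example~\ref{example.nonlocalJ}), and then the Nemytskii operator $u\mapsto f(u)$ is \emph{not} Fr\'echet differentiable from $L^2(\Omega,\nu)$ to $L^2(\Omega,\nu)$ unless $f$ is affine: taking $v=M\1_E$ with $\nu(E)\to0$ gives $\|f(u_0+v)-f(u_0)-f'(u_0)v\|_2/\|v\|_2\to |f(u_0+M)-f(u_0)-Mf'(u_0)|/M$, a nonzero constant for convex non-affine $f$, even though $\|v\|_2\to0$. Truncating $f$ does not repair this, so the IFT cannot be applied to $F:\R\times L_0^2(\Omega_m,\nu)\to L^2(\Omega,\nu)$ as stated. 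Working instead in $L^\infty$, where the Nemytskii map is $C^1$, would require invertibility of the linearized operator on $L^\infty$, whereas Lemma~\ref{Existence2} only provides $L^2$ solvability; this is precisely the $L^\infty$-control issue you yourself flag as ``the main technical obstacle,'' and it is not resolved in the proposal. On a finite weighted graph all norms are equivalent and both difficulties vanish, so your argument does prove (b); but (a) is asserted for general random walk spaces, and there your right-continuity step does not close. The paper's explicit supersolution $u_\lambda+\delta\phi$ is the device that bypasses both the differentiability and the $L^\infty$-promotion problems, and you would need to import it (or an equivalent) to complete your plan.
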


\begin{proof}
 Firstly, we prove that $u_{\lambda_n}$ converges pointwise to $u_\lambda$, as $\lambda_n \to \lambda$.

  For this purpose, we start by setting  $0<\lambda<\lambda^*$. Let $0<\{\lambda_n\}_n<\lambda$ be an increasing sequence such that $\lambda_n\nearrow \lambda$ as $n\to \infty.$ Thus, $u_{\lambda_n}\leq u_\lambda$ and by Dominate Convergence Theorem $u_{\lambda_n}\to v$, a solution to problem $(P(\lambda f))$. Besides $v\leq u_\lambda$, but $u_\lambda$ is the minimal solution, so $v=u_\lambda$ and
  \begin{equation}\label{increasing}
  u_{\lambda_n}(x) \to u_\lambda, \quad \, \lambda_n\nearrow \lambda.
  \end{equation}

    In order to prove the other alternative, we continue in an analogous way as before. Let $ 0\le \lambda<\{\lambda_n\}_n<\lambda^*$ be a decreasing sequence such that $\lambda_n\searrow \lambda$ as $n\to \infty.$ Thus, $u_{\lambda_n}\geq u_\lambda$ and by Dominate Convergence Theorem $u_{\lambda_n}\to w$, a solution to problem $(P(\lambda f))$. Besides $w\geq u_\lambda$.

     Now, since $u_\lambda$ is minimal and $f$ is convex, then $u_\lambda$ is stable (Theorem \ref{megusta}). Even more, by Theorem \ref{good1}, $\lambda_1:=\lambda_1(- \Delta_m -\lambda f'(u_\lambda)) > 0$. Then, by then Lemma~\ref{Existence2},
there is a solution $\phi \in L_0^\infty(\Omega_m, \nu)$ of problem
$$
\left\{\begin{array}{ll}
-\Delta_m u - f'(u_\lambda)u  =\1_\Omega&\hbox{in } \Omega,\\[10pt]
u(x)=0&\hbox{on }\partial_m\Omega.
\end{array}\right.
$$

 Arguing as in \cite[Theorem 1.1]{CEP}, consider $v_\delta:=u_\lambda+\delta \phi$, for $\delta>0$. Clearly, $v_\delta \to u_\lambda$ as $\delta \to 0$.
Now, for a fixed $\theta>0$, the following holds:
\begin{align}
-\Delta_m v_\delta -(\lambda +\theta)f(v_\delta) & = \lambda f(u_\lambda)+ \delta+\delta \lambda f'(u_\lambda)\phi-\lambda f(v_\delta)-\theta f(v_\delta)
\\
& =\delta-\theta f(v_\delta)-\lambda \left(f(v_\delta)-f(u_\lambda)-\delta f'(u_\lambda)\phi \right)
\\
& = \delta-\theta f(v_\delta)-\lambda \left(f(v_\delta)-f(u_\lambda)-(v_\delta-u_\lambda)f'(u_\lambda)  \right).
\end{align}
Then, for $\tau_\delta \in (u_\lambda, u_\lambda+\delta \phi)$ such that $f(v_\delta)-f(u_\lambda)=f'(\tau_\delta)(v_\delta-u_\lambda)$,
\begin{align}\label{eq123}
-\Delta_m v_\delta -(\lambda +\theta)f(v_\delta) & =  \delta-\theta f(v_\delta)-\lambda \delta \phi \left(f'(\tau_\delta)-f'(u_\lambda) \right).
\end{align}
Since $f$ is $C^1$  and $u_\lambda$ is $L^\infty$--bounded, we take $\delta$, small enough, such that
$$
f'(\tau_\delta)-f'(u_\lambda)\leq \displaystyle \frac{1}{2\lambda \|\phi\|_\infty}.
$$
Take also
 $$
 \theta \leq \displaystyle \frac{\delta}{2f(\|u_\lambda\|_\infty+\delta \|\phi\|_\infty)}.
 $$
Then, from~\eqref{eq123},
\begin{align}
-\Delta_m v_\delta -(\lambda +\theta)f(v_\delta) & \geq \delta-\theta  f(v_\delta)-\lambda \delta \|\phi\|_\infty  \left(f'(\tau_\delta)-f'(u_\lambda) \right)
\\
& \geq \displaystyle \frac{\delta}{2}-\theta  f(v_\delta)
\\
& \geq \displaystyle \frac{\delta}{2}-\theta  f(\|u_\lambda\|_\infty + \delta \|\phi\|_\infty)
\\
&\geq 0.
\end{align}
 Thus, for $\theta$ small enough, $v_\delta$ is a supersolution of problem $(P((\lambda+\theta)f))$, so $v_\delta \geq u_{\lambda+\theta}$ and then
 $$
 w\leq v_\delta
 $$
 since $u_{\lambda_n}\to w$, as $\lambda_n \searrow \lambda$. Now, taking limit $\delta \to 0$, we get that $w\leq u_\lambda$. Hence $w=u_\lambda$ and  $u_{\lambda_n}(x) \to u_\lambda, \quad \, \lambda_n\searrow \lambda$. Therefore, together with \eqref{increasing}, we obtain the pointwise convergence:
 \begin{equation}\label{pointwise}
 u_{\lambda_n}(x)\to u_\lambda(x)\, \it{as} \lambda_n \to \lambda,\, \, \it{for}\, \it{a.e.} \, x\in \Omega.
  \end{equation}
  Moreover, $u_{\lambda_n}$ is uniformly bounded (because is increasing and bounded for all $n$).

\item  {\rm (a)} Can be obtained directly from \eqref{pointwise} and Dominate Convergence Theorem.
\item  {\rm (b)} Since the convergence $u_{\lambda_n}\to u_\lambda$ is uniform. It is straightforward that  $\lambda \to \|u_\lambda\|_\infty$ is continuous.
\end{proof}

\begin{remark}{\rm
As can be seen in Example \ref{exnocont} (see also Figure \ref{no_continuo_++}), the convexity of function $f$ is a necessary hypothesis in the above result. \hfill$\blacksquare$
}
\end{remark}

\section{The exponential function and   power-like functions. Examples}

\subsection{The Gelfand problem}
In the case that $f(s)=e^s$ we have that
$$\lambda^*\le\frac{\lambda_m(\Omega)}{e}<\frac{1}{e},$$
 and~\eqref{boundW} says that,  if $u$ is a solution to $(P(\lambda e^s))$,
\begin{equation}\label{boundWexp}
-W_0(-\lambda) \leq u \leq -W_{-1}(-\lambda).
\end{equation}
where   $W_0$ and $W_{-1}$ are  the principal branch and the negative branch, respectively, of the Lambert $W$ function, see Figure~\ref{figLambert}.
\begin{figure}[h]
\includegraphics[scale=0.5]{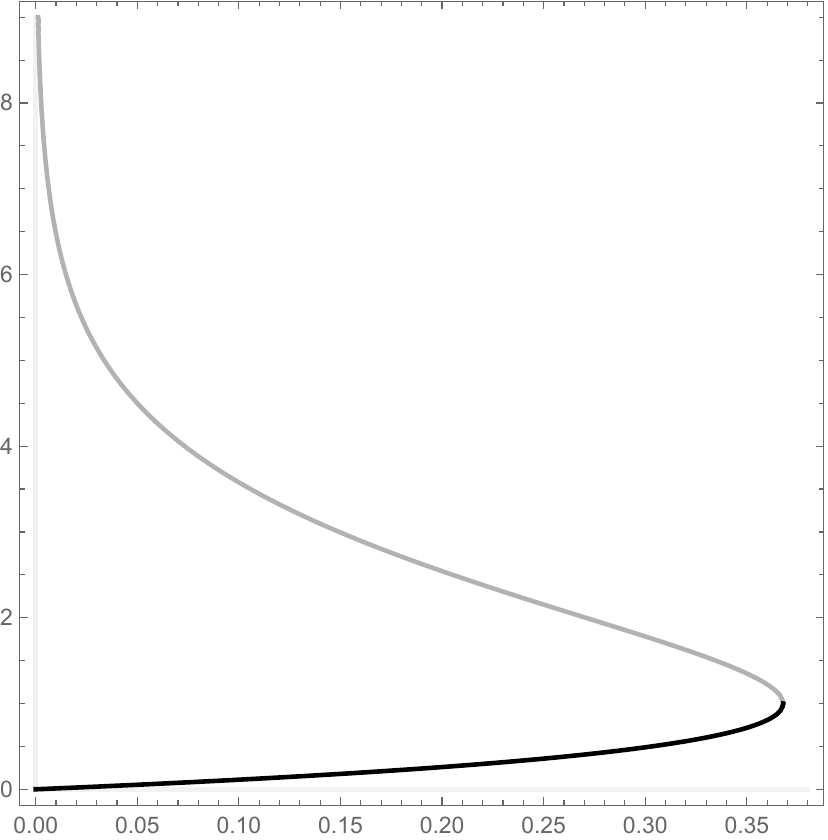}
\caption{$-W_0(-\lambda)$ and $-W_{-1}(-\lambda)$.}
\label{figLambert}
\end{figure}

\begin{remark} \rm
  Observe  that   if $u$ is a solution to $(P(\lambda e^s))$,  then
$$-u(x)e^{-u(x)}=  -\lambda  - e^{-u(x)}  \int_{\Omega} u(y)\, dm_x(y).$$
Therefore, for (a.e.) $x\in\Omega$, either
$$u(x)=-W_0\left(-\lambda  - e^{-u(x)} \int_{\Omega} u(y)\, dm_x(y)\right),$$
or
$$u(x)=-W_{-1}\left(-\lambda  - e^{-u(x)}  \int_{\Omega} u(y)\, dm_x(y)\right).$$
 Being both given implicitly.
In Example~\ref{example4pt} (also in Example~\ref{kn}) we see  that {\it the  branch of minimal solutions} satisfy
$$u_\lambda(x)=-W_0\left(-\lambda  - e^{-u(x)} \, \int_{\Omega} u_\lambda(y)\, dm_x(y)\right)\quad  \hbox{for all } x\in\Omega,$$
But this is not the general rule as seen in other examples.
\hfill$\blacksquare$\end{remark}

 The next examples are given in  quite simple  weighted graphs but they are illustrative of the many situations that can occur for the solutions of the Gelfand-type problems on weighted graphs. The   function
$$h(x)=x-\lambda e^{x},\quad\hbox{for $0<\lambda< \frac{1}{e}$,}
$$
  is important in these examples. It is a convex function with two zeros,  $-W_0(-\lambda)$ and $-W_{-1}(-\lambda)$, and a maximum at $\ln\frac{1}{\lambda}>0$ with value $\ln\frac{1}{\lambda}-1>0$. Its shape can be seen at Figure~\ref{landalargo}.

\begin{example}\label{example4pt}{\rm   Let $G=(V,E)$ be the  graph $V= \{1,2,3,4\}$, with weights $$w_{12}=w_{34}=w_{23}=1,$$ and $w_{i,j} = 0$ otherwise. This is the weighted linear graph given in Figure~\ref{fig02}.
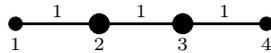
\begin{figure}[ht]
\centering
\begin{tikzpicture}[line cap=round,line join=round,>=triangle 45,x=1.1cm,y=1.1cm]
\draw [line width=1pt] (-2,0)-- (1,0);
\begin{scriptsize}
\draw [fill=black] (-2,0) circle (2.5pt);
\draw[color=black] (-2,-0.25) node {$1$};
\draw [fill=black] (-1,0) circle (3.75pt);
\draw[color=black] (-1,-0.25) node {$2$};
\draw[color=black] (-1.5,0.15) node {$1$};
\draw [fill=black] (0,0) circle (3.75pt);
\draw[color=black] (0,-0.25) node {$3$};
\draw[color=black] (-0.5,0.15) node {$1$};
\draw [fill=black] (1,0) circle (2.5pt);
\draw[color=black] (1,-0.25) node {$4$};
\draw[color=black] (0.5,0.15) node {$1$};
\end{scriptsize}
\end{tikzpicture}
\caption{Linear weighted graph in Example~\ref{example4pt}.}\label{fig02}
\end{figure}
 Set $\Omega:= \{2,3\}$ and consider the Gelfand problem $(P(\lambda e^s))$
\begin{equation}\label{pb00001}\left\{\begin{array}{ll}
\displaystyle- \Delta_{m^G} u =\lambda \, e^{u},\quad &\hbox{in} \ \Omega,
\\ \\
u=0,\quad &\hbox{on} \  \partial_{m^G}\Omega.
\end{array}\right.
\end{equation}
For this random walk space, it is easy to see that $\lambda_m(\Omega)=\frac12$. We have that $u$ is a solution of  \eqref{pb00001}  if and only if verifies
$$\left\{\begin{array}{ll}
\displaystyle- \Delta_{m^G} u(2) =\lambda \, e^{u(2)},\\
\\ - \Delta_{m^G} u(3) =\lambda \, e^{u(3)},
\\ \\
u(1) = u(4) = 0,
\end{array}\right.
$$
which is equivalent to $u(1) = u(4) = 0$ and
\begin{equation}\label{pb000012}\left\{\begin{array}{ll}
u(2)  -\frac{1}{2}u(3) = \lambda  \, e^{u(2)},
\\ \\
u(3)  -\frac{1}{2}u(2) = \lambda  \, e^{u(3)}.
\end{array}\right.
\end{equation}
Let us call $x=u(2)$ and $y=u(3)$. Then~\eqref{pb000012} is equal to
\begin{equation}\label{problem3Examxy}\left\{\begin{array}{ll}
y=2(x-\lambda e^{x}),
\\ \\
x=2(y-\lambda e^y).
\end{array}\right.
\end{equation}
Now, the graphs $y=2(x-\lambda e^{x})$ and $x=2(y-\lambda e^y)$, for $\lambda$  for large, do not intersect; for $\lambda=\lambda^*=\frac{1}{2e}= \lambda_m(\Omega)\frac{1}{e}$, they intersect in $(1,1)$; there exists $\hat\lambda\approx 0.076$ such that for $\lambda\in [\hat\lambda, \lambda^*)$, the graphs intersect in two points in the line $y=x$, $(u_\lambda(2),u_\lambda(3))$ and $(u^\lambda(2),u^\lambda(3))$; and for  $\lambda\in (0, \hat\lambda)$, they intersect in four points, that are $(u_\lambda(2),u_\lambda(3))$ and $(u^\lambda(2),u^\lambda(3))$ and
$(u^{3,\lambda}(2),u^{3,\lambda}(3)),\ (u^{4,\lambda}(2),u^{4,\lambda}(3)),$
with
$$u^{3,\lambda}(2)=u^{4,\lambda}(3)<u^\lambda(2)=u^\lambda(3)<u^{3,\lambda}(3)=u^{4,\lambda}(2),$$
see Figure~\ref{landalargo}.
\begin{figure}[ht]
  \centering
  \begin{subfigure}{0.25\textwidth}
    \centering
    \includegraphics[width=\linewidth]{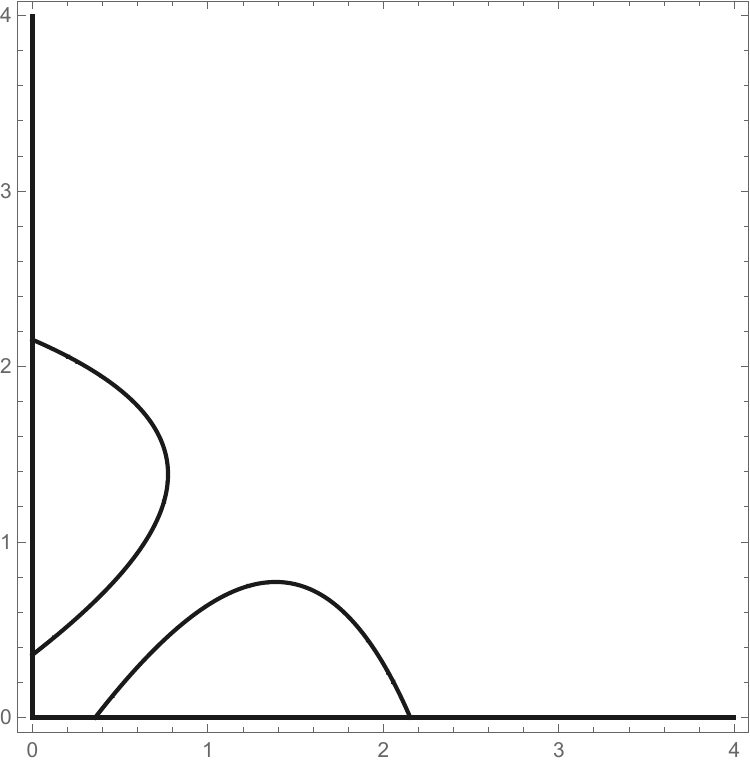}
    \caption{$\frac{1}{e}>\lambda>\lambda^*$.}
  \end{subfigure}
  \hspace{1cm}
   \begin{subfigure}{0.25\textwidth}
    \centering
    \includegraphics[width=\linewidth]{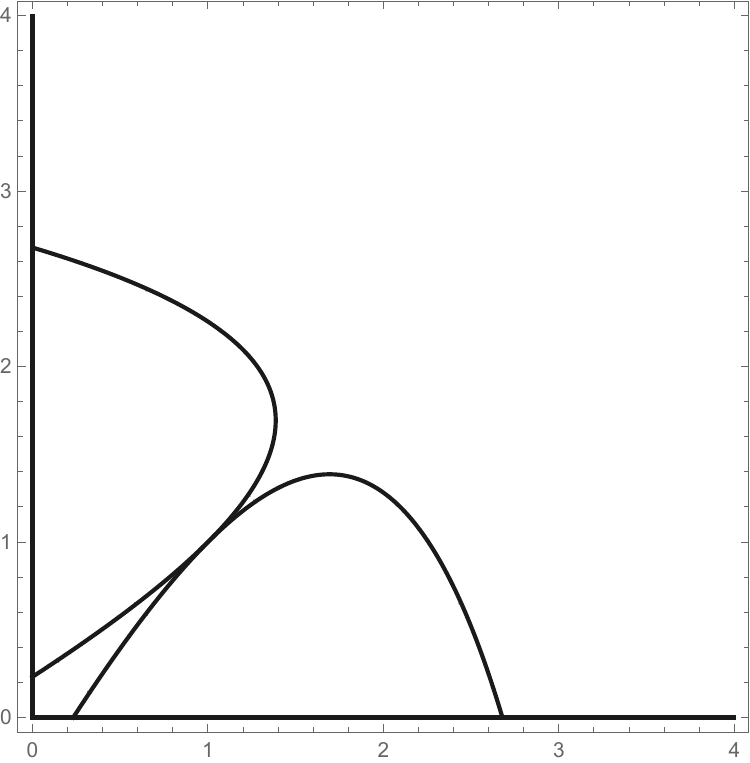}
    \caption{$\lambda=\lambda^*=\frac{1}{2e}$.}
  \end{subfigure}

 \vspace{0.5cm}

  \begin{subfigure}{0.25\textwidth}
    \centering
    \includegraphics[width=\linewidth]{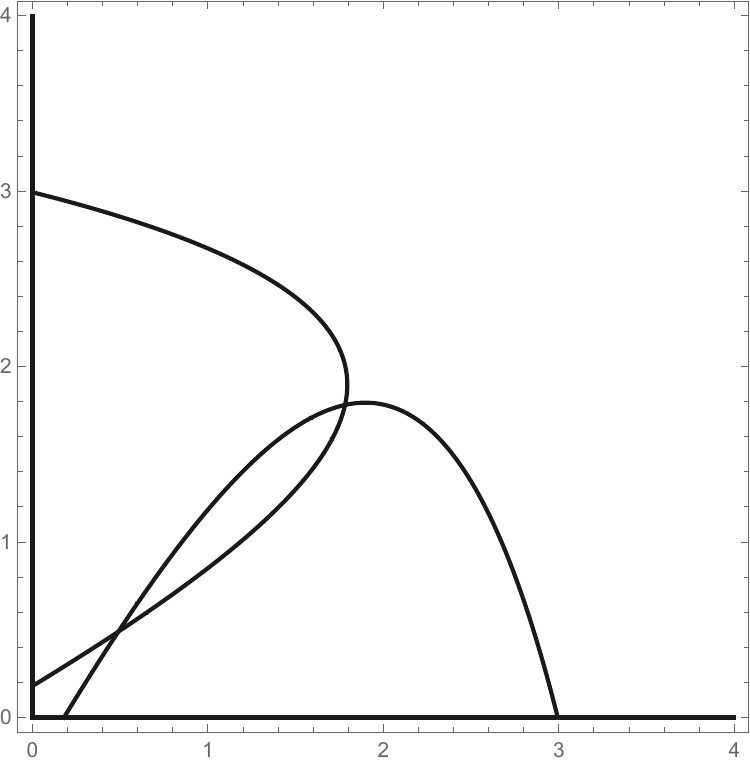}
    \caption{$\hat\lambda\le \lambda<\lambda^*$.}
  \end{subfigure}
 \hspace{1cm}
   \begin{subfigure}{0.25\textwidth}
    \centering
    \includegraphics[width=\linewidth]{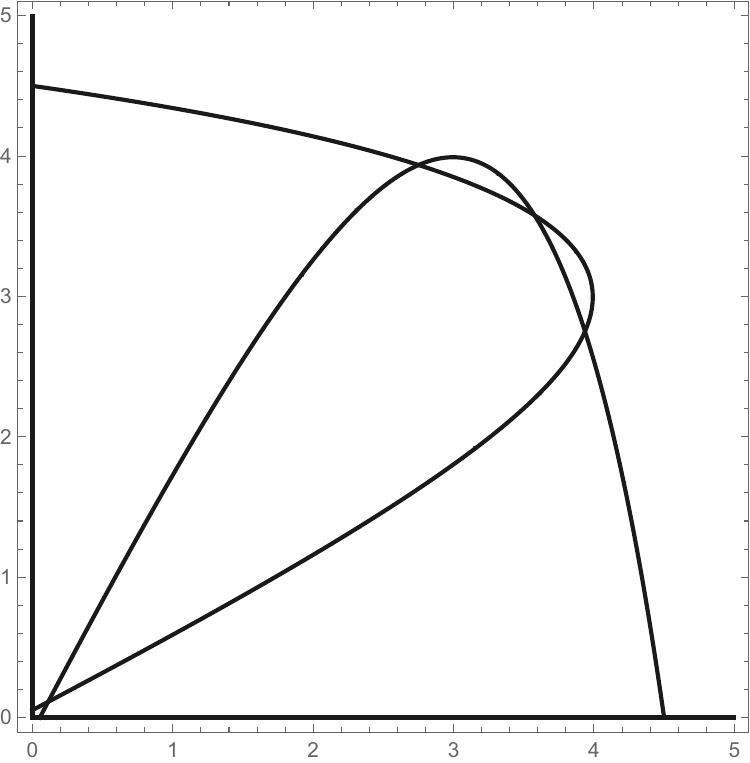}
    \caption{$0<\lambda<\hat\lambda$.}
  \end{subfigure}

  \caption{Solutions of~\eqref{problem3Examxy}. In dark--gray the graph $x=2(y-\lambda e^y)$, in light--gray $y=2(x-\lambda e^{x})$.}
  \label{landalargo}
\end{figure}
Hence,   the    branch of minimal solutions is  given by  (the values at the boundary are $0$):
$$u_\lambda(2)=u_\lambda(3)=-W_0\left( - 2 \lambda \right) \quad\forall \, 0<\lambda\le \lambda^*.$$

This branch continues with a second branch of solutions
$$u^{\lambda}(2)=u^{\lambda}(3)=-W_{-1}\left( - 2 \lambda \right) \quad\forall \,0<\lambda< \lambda^*.$$
And, for $0<\lambda<\hat\lambda$, two more branches of solutions appear
$u^{3,\lambda}$ and $u^{4,\lambda}$ as described above.
See the bifurcation diagram  of $\lambda\mapsto u$ in~Figure~\ref{fig4puntosbueno}.
\begin{figure}[h]
\includegraphics[scale=0.5]{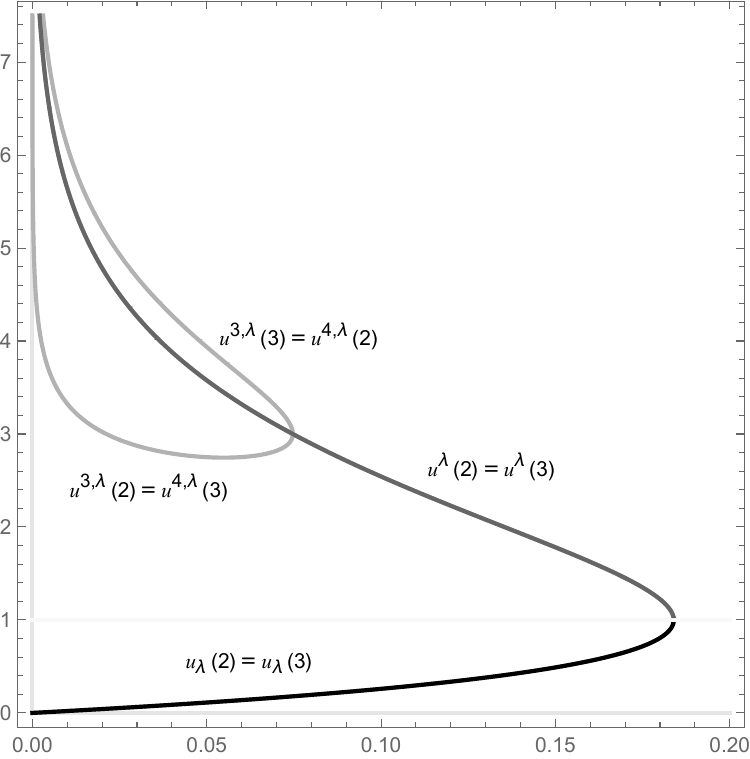}
\caption{Bifurcation diagram  for $\lambda\mapsto u(x)$, $x=2,3$, in Example~\ref{example4pt}. $\lambda^*=\frac{1}{2e}\approx 0.184$.}
\label{fig4puntosbueno}
\end{figure}

  If we change the weights to
$$w_{1,2}=w_{34} = b>0,\  w_{23}=a>0,$$ and $w_{i,j} = 0$ otherwise (see Figure~\ref{fig03}),
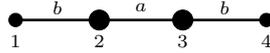
\begin{figure}[ht]
\centering
\begin{tikzpicture}[line cap=round,line join=round,>=triangle 45,x=1.1cm,y=1.1cm]
\draw [line width=1pt] (-2,0)-- (1,0);
\begin{scriptsize}
\draw [fill=black] (-2,0) circle (2.5pt);
\draw[color=black] (-2,-0.25) node {$1$};
\draw [fill=black] (-1,0) circle (3.75pt);
\draw[color=black] (-1,-0.25) node {$2$};
\draw[color=black] (-1.5,0.15) node {$b$};
\draw [fill=black] (0,0) circle (3.75pt);
\draw[color=black] (0,-0.25) node {$3$};
\draw[color=black] (-0.5,0.15) node {$a$};
\draw [fill=black] (1,0) circle (2.5pt);
\draw[color=black] (1,-0.25) node {$4$};
\draw[color=black] (0.5,0.15) node {$b$};
\end{scriptsize}
\end{tikzpicture}
\caption{Different weights (symmetric).}
\label{fig03}
\end{figure}
since there is symmetry in the space, there are also symmetric solutions, one is  the branch of minimal solutions, given by:
$$u^{1,\lambda}(2)=u^{1,\lambda}(3)=-W_0\left( - \frac{a+b}{b} \lambda \right) \quad\forall 0<\lambda\le \lambda^*.$$
Here, $\lambda_m(\Omega)=\frac{b}{a+b}$ and
$$\lambda^* = \lambda_m(\Omega)\frac{1}{e}=\frac{b}{a+b} \, \frac{1}{e}.$$
This branch continues with a second branch of  symmetric   solutions
$$u^{2,\lambda}(2)=u^{2,\lambda}(3)=-W_{-1}\left( - \frac{a+b}{b} \lambda \right) \quad\forall \ 0<\lambda< \lambda^*.$$ There are two more branches as before.
Observe that, letting $b\to +\infty$ in $\lambda^*$ we get  $\frac{b}{a+b}\frac{1}{e}\to \frac{1}{e}=\frac{s_0}{f(s_0)}$, which becomes optimal (see~\eqref{optimallambdamas}).
\hfill $\blacksquare$}
\end{example}

\begin{remark} \rm For the trivial weighted graph $\{1,2\}$  with weight $w_{12}=1$ and $\Omega=\{1\}$ (observe that $\Omega$ is not m-connected)
we have that the solutions to $(P(\lambda e^s))$ are (the value  at the boundary is $0$)
$$u_\lambda(2)=-W_0(-\lambda)$$
and
 $$u^\lambda(2) = -W_{-1}(-\lambda).$$
 \hfill $\blacksquare$
\end{remark}

We can extend the result of Example~\ref{example4pt} to the following  weighted graphs.

\begin{example}\label{kn}{\rm
Consider, for $n\ge 3$, the weighted complete graph $K_n=\{1,2,,...,n\}$, with weights for the edges on the $n$-cycle all equal to $a>0$,
$$w_{1,2}=w_{2,3}=...=w_{n-1,n}=w_{n,1}=a>0,$$
and the other weights equal to $c\ge 0$ (if $c=0$ we are dealing in fact with the $n$-cycle $C_n$). Take the weighted graph
$$\widehat K_n:=K_n\cup\{-1,-2,...,-n\}$$
with the above given weights between vertices in $K_n$ and with
$$w_{i,-i}=b>0,\ i=1,2,...,n,$$
and any other $w_{i,j}=0$, see Figure~\ref{k4c4} or Figure~\ref{petersen02} (observe, that we can take all the points $\{-1,-2,...,-n\}$ being the same, as in Figure~\ref{equivgraph01}).
\begin{figure}[ht]
\centering
\begin{tikzpicture}[line cap=round,line join=round,>=triangle 45,x=1.1cm,y=1.1cm]
\draw [line width=1pt] (-1,0)-- (0,1);
\draw [line width=1pt] (0,1)-- (1,0);
\draw [line width=1pt] (1,0)-- (0,-1);
\draw [line width=1pt] (0,-1)-- (-1,0);
\draw [line width=1pt] (-1,0)-- (-2,0);
\draw [line width=1pt] (1,0)-- (2,0);
\draw [line width=1pt] (0,1)-- (0,2);
\draw [line width=1pt] (0,-1)-- (0,-2);
\draw [line width=1pt] (-1,0)-- (1,0);
\draw [line width=1pt] (0,-1)-- (0,1);
\begin{scriptsize}
\draw [fill=black] (-2,0) circle (2.5pt);
\draw [fill=black] (-1,0) circle (3.75pt);
\draw [fill=black] (0,1) circle (3.75pt);
\draw [fill=black] (0,2) circle (2.5pt);
\draw [fill=black] (1,0) circle (3.75pt);
\draw [fill=black] (2,0) circle (2.5pt);
\draw [fill=black] (0,-1) circle (3.75pt);
\draw [fill=black] (0,-2) circle (2.5pt);

\draw[color=black] (-2,-0.25) node {$-1$};
\draw[color=black] (-1,-0.25) node {$1$};
\draw[color=black] (-1.5,0.15) node {$b$};

\draw[color=black] (1,-0.25) node {$3$};
\draw[color=black] (2,-0.25) node {$-3$};
\draw[color=black] (1.5,0.15) node {$b$};

\draw[color=black] (-0.35,2) node {$-4$};
\draw[color=black] (-0.25,1) node {$4$};
\draw[color=black] (0.15,1.5) node {$b$};

\draw[color=black] (-0.25,-1) node {$2$};
\draw[color=black] (-0.35,-2) node {$-2$};
\draw[color=black] (0.15,-1.5) node {$b$};

\draw[color=black] (0.6,0.6) node {$a$};
\draw[color=black] (0.6,-0.6) node {$a$};
\draw[color=black] (-0.6,0.6) node {$a$};
\draw[color=black] (-0.6,-0.6) node {$a$};

\draw[color=black] (-0.45,-0.15) node {$c$};
\draw[color=black] (0.15,0.45) node {$c$};

\end{scriptsize}
\end{tikzpicture}
\caption{$\widehat K_4$.}\label{k4c4}
\end{figure}
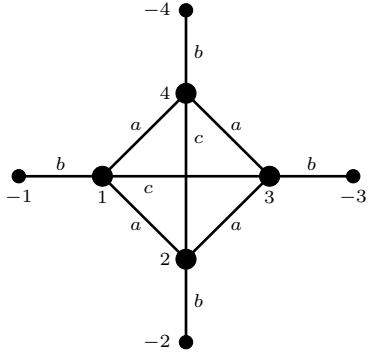

\begin{figure}[ht]
\begin{tikzpicture}
\begin{scriptsize}
    \node[fill=black, circle, inner sep=3.25pt] (1) at ({360/5 * (1 - 1)}:2) {};
    \node[fill=black, circle, inner sep=2.05pt] (2) at ({360/5 * (2 - 1)}:2) {};
    \node[fill=black, circle, inner sep=3.25pt] (3) at ({360/5 * (3 - 1)}:2) {};
    \node[fill=black, circle, inner sep=3.25pt] (4) at ({360/5 * (4 - 1)}:2) {};
    \node[fill=black, circle, inner sep=3.25pt] (5) at ({360/5 * (5 - 1)}:2) {};

    \node[above right] at ({360/5 * (1 - 1)}:2.1) {4};
    \node[above right] at ({360/5 * (2 - 1)}:2) {5};
    \node[above right] at ({360/5 * (3 - 1)}:2.3) {1};
    \node[above right] at ({360/5 * (4 - 1)}:2.7) {2};
    \node[above right] at ({360/5 * (5 - 1)}:2.3) {3};

    \path[draw, thick] (1) -- node[right=0.1] {$b$} (2);
    \path[draw, thick] (1) -- node[above] {$a$} (3);
    \path[draw, thick] (1) -- node[above] {$c$} (4);
    \path[draw, thick] (1) -- node[right] {$a$} (5);
    \path[draw, thick] (2) -- node[above] {$b$} (3);
    \path[draw, thick] (2) -- node[below] {$b$} (4);
    \path[draw, thick] (2) -- node[right] {$b$} (5);
    \path[draw, thick] (3) -- node[left] {$a$} (4);
    \path[draw, thick] (3) -- node[right] {$c$} (5);
    \path[draw, thick] (4) -- node[below] {$a$} (5);
\end{scriptsize}
\end{tikzpicture}
\caption{A graph equivalent to $\widehat K_4$ for the Gelfand problem. The points $\{-1,-2,-3,-4\}$ are joined at point $5$.}
\label{equivgraph01}
\end{figure}
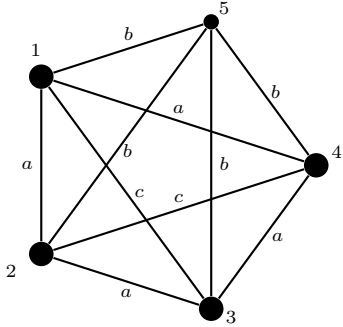

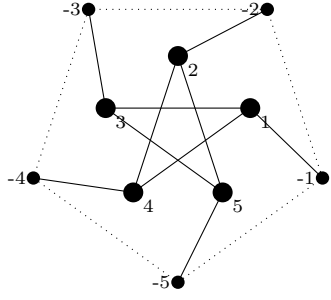
\begin{figure}[ht]
\centering
\begin{tikzpicture}
\begin{scriptsize}
  \foreach \i in {1,...,5}
  {
    \path (270+72*\i:2) coordinate (A\i);
    \path (270+72*\i+36:1) coordinate (B\i);
  }

  \foreach \i in {1,...,5}
  {
    \pgfmathtruncatemacro{\nexti}{mod(\i,5)+1}
    \draw[dotted] (A\i) -- (A\nexti);
  }

  \foreach \i in {1,...,5}
  {
    \pgfmathtruncatemacro{\nexti}{mod(\i+2,5)+1}
    \draw (B\i) -- (B\nexti);
  }

  \foreach \i in {1,...,5}
  {
    \draw (A\i) -- (B\i);
  }

  \foreach \i in {1,...,5}
  {
    \fill (A\i) circle (2.5pt);
    \fill (B\i) circle (3.75pt);
    \node[anchor=north] at (B\i ) {\hspace{0.4cm}\i};
    \node[anchor=east] at (A\i) {\hspace{0.5cm}-\i};
  }
\end{scriptsize}
\end{tikzpicture}
\caption{$\widehat K_5$ with $c=0$ in Example~\ref{kn}. The subgraph $\Omega=\{1,2,3,4,5\}$ is in fact $C_5$. It is the Petersen Graph if Dirichlet vertices (those of the boundary of $\Omega$) are joined as shown.}\label{petersen02}.
\end{figure}

\noindent Then, for $\Omega=K_n$, the Gefand problem $(P(\lambda e^s))$ has two  branches of symmetric solutions given by $u(x)=0$ for $x=-1,-2,-...,-n$, and
$$u(x)=-W\left(-\frac{2a+(n-3)c+b}{b}\lambda\right),\ x=1,2,...,n,$$
for  $0<\lambda\le \frac{b}{2a+(n-3)c+b}\frac{1}{e}$.
One of this is the minimal branch (see at the end of the example), $$u_\lambda(x)=-W_0\left(-\frac{2a+(n-3)c+b}{b}\lambda\right),\ x=1,2,...,n,$$
for $0\le\lambda\le\lambda^*=\frac{b}{2a+(n-3)c+b}\frac{1}{e}.$  And a second branch is
$$u^{2,\lambda}(x)=-W_{-1}\left(-\frac{2a+(n-3)c+b}{b}\lambda\right),\ x=1,2,...,n,$$
for $0<\lambda\le\lambda^*.$

Let us proof that the minimal branch of solutions is given by $$u_\lambda(x)=-W_0\left(-\frac{2a+(n-3)c+b}{b}\lambda\right),\quad x=1,2,...,n$$ We proof this for $\widehat K_4$ (see Figure~\ref{k4c4}), since the arguments of the general case are the same, and this is simpler and illustrative. A solution $x=u(1)$, $y=u(2)$, $z=u(3)$, $w=u(4)$ (the boundary values ar $0$) of $(P(\lambda e^s))$ satisfy
$$\left\{\begin{array}{l}
 x-\frac{a}{2a+c+b}y-\frac{c}{2a+c+b}z-\frac{a}{2a+c+b}w= \lambda e^x ,\\
\\
y-\frac{a}{2a+c+b}x -\frac{a}{2a+c+b}z-\frac{c}{2a+c+b}w= \lambda e^y ,\\
\\
z-\frac{c}{2a+c+b}x-\frac{a}{2a+c+b}y- \frac{a}{2a+c+b}w=  \lambda e^z ,\\
\\
w-\frac{a}{2a+c+b}x-\frac{c}{2a+c+b}y-\frac{a}{2a+c+b}z =  \lambda e^w .
\end{array}\right.
 $$
Adding the above for equations we deduce that
 $$\frac{b}{2a+c+b}(x+y+z+w)=\lambda(e^x+e^y+e^z+e^w).$$ Now, by convexity, $e^x+e^y+e^z+e^w \ge 4 e^{\frac{x+y+z+w}{4}}$, and, hence
 $$\frac{b}{2a+c+b}\frac{x+y+z+w}{4}\ge\lambda e^{\frac{x+y+z+w}{4}}.$$
 Then,
 $$-\frac{x+y+z+w}{4}e^{-\frac{x+y+z+w}{4}}\le-\frac{2a+c+b}{b}\lambda .$$
Therefore,
$$-W_0\left(-\frac{2a+c+b}{b}\lambda\right)\le \frac{x+y+z+w}{4}\le -W_{-1}\left(-\frac{2a+c+b}{b}\lambda\right).$$
Hence, $x=y=z=w=-W_0\left(-\frac{2a+c+b}{b}\lambda\right)$ gives the minimal solution.
\hfill $\blacksquare$}
\end{example}

\begin{remark}\label{remuni01}\rm
In the  previous example, for a fixed graph, there exists $k>1$, such that
\begin{equation}\label{remgi02}\frac{1}{d_x}\sum_{\tiny\begin{array}{c}y\in\partial_{m^G}\Omega\\ y\sim x\end{array}}w_{x,y}=\frac{1}{k}\quad\hbox{for each $x\in \Omega$,}
\end{equation}
(consequently, any $x\in\Omega$ is connected to a Dirichlet vertex),
and, for such situation, the same previous arguments give that the Gelfand problem $(P(\lambda e^s))$ (or changing $e^s$ by any other function $f\in \mathcal{C}^1([0,\infty[)$, strictly convex function and satisfying~\eqref{C1}, changing the Lambert $W$ function adequately attending point 3 in Remark~\ref{remgi}) has a branch of solutions given by
$$u(x)=-W\left(-k\lambda\right),\ x=1,2,...,|\Omega|,$$
for  $0<\lambda\le \frac{1}{ke}$,
being   $$u_\lambda(x)=-W_0\left(-k\lambda\right),\ x=1,2,...,|\Omega|,$$
for $0\le\lambda\le\lambda^*=\frac{1}{ke},$ the minimal solution.

Joint to the cases of Example~~\ref{kn}, this is also the case of $\Omega$ being any $n$-regular finite graph with edges weighted by $a>0$ and with each vertex joined to a Dirichlet vertex with and edge weighted by $b>0$ (see an example in Figure~\ref{Holt01}). In this case the weighted index of each $x\in\Omega$ is equal to $na+b$ and $$k=\frac{na+b}{b}.$$

\begin{figure}[ht]
    \centering
        \begin{tikzpicture}[scale=1.2]

\foreach \i in {1,...,9}{
    \node[circle, fill=black, inner sep=2.5pt] (A\i) at (360/9*\i:2) {};  
    \node[circle, fill=black, inner sep=2.5pt] (B\i) at (360/9*\i:1.2) {};  
    \node[circle, fill=black, inner sep=2.5pt] (C\i) at (360/9*\i:0.6) {}; 
    \node[circle, fill=black, inner sep=1.75pt] (D\i) at (360/9*\i+15:2.8) {}; 
}

\foreach \i [count=\j from 2] in {1,...,8}{
    \draw[very thick] (A\i) -- (A\j);
}
\draw[very thick] (A9) -- (A1);

\foreach \i [count=\j from 2] in {1,...,8}{
    \draw[very thick] (B\i) -- (B\j);
}
\draw[very thick] (B9) -- (B1);

\foreach \i [count=\j from 2] in {1,...,8}{
    \draw[very thick] (C\i) -- (C\j);
}
\draw[very thick] (C9) -- (C1);

\foreach \i in {1,...,9}{
    \draw[very thick] (A\i) -- (B\i);
    \draw[very thick] (B\i) -- (C\i);
}

\draw[very thick] (A1) -- (C2);
\draw[very thick] (A2) -- (C3);
\draw[very thick] (A3) -- (C4);
\draw[very thick] (A4) -- (C5);
\draw[very thick] (A5) -- (C6);
\draw[very thick] (A6) -- (C7);
\draw[very thick] (A7) -- (C8);
\draw[very thick] (A8) -- (C9);
\draw[very thick] (A9) -- (C1);


\draw[thin] (A2) -- (D1);
\draw[thin] (A3) -- (D2);
\draw[thin] (A4) -- (D3);
\draw[thin] (A5) -- (D4);
\draw[thin] (A6) -- (D5);
\draw[thin] (A7) -- (D6);
\draw[thin] (A8) -- (D7);
\draw[thin] (A9) -- (D8);
\draw[thin] (A1) -- (D9);

\draw[thin] (B2) -- (D1);
\draw[thin] (B3) -- (D2);
\draw[thin] (B4) -- (D3);
\draw[thin] (B5) -- (D4);
\draw[thin] (B6) -- (D5);
\draw[thin] (B7) -- (D6);
\draw[thin] (B8) -- (D7);
\draw[thin] (B9) -- (D8);
\draw[thin] (B1) -- (D9);

\draw[thin] (C2) -- (D1);
\draw[thin] (C3) -- (D2);
\draw[thin] (C4) -- (D3);
\draw[thin] (C5) -- (D4);
\draw[thin] (C6) -- (D5);
\draw[thin] (C7) -- (D6);
\draw[thin] (C8) -- (D7);
\draw[thin] (C9) -- (D8);
\draw[thin] (C1) -- (D9);

\end{tikzpicture}
\caption{$4$-regular  Holt Graph: larger dots and thicker edges.}\label{Holt01}.
\end{figure}
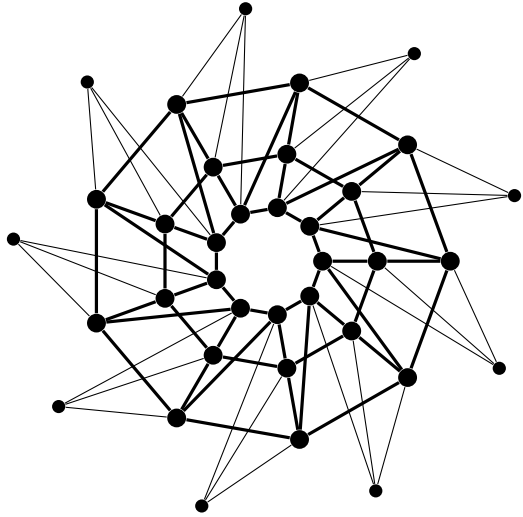

But there are many  other {\it non-so-regular} graphs satisfying such condition: For $\Omega=\{x_1,x_2,...,x_n\}$ being any connected weighted graph and $k>1$, set $\partial_{m^G}\Omega=\{x_0\}$ (Dirichlet vertices can be joined in one), and connect each $x_i$ in $\Omega$ to $x_0$ with and edge weighted by $$w_{x_i,x_0}=\frac{1}{k-1}\displaystyle \sum_{x_j\sim x_i}w_{x_i,x_j}.$$
Then the new graph satisfies~\eqref{remgi02} (weighed indices on vertices most be redefined).
\hfill $\blacksquare$
\end{remark}

\begin{example}\label{example3pointsa1} \rm   Let $G=(V,E)$ be the  graph $V= \{1,2,3\}$, with weights $$w_{12}=w_{23}=1,$$ and $w_{i,j} = 0$ otherwise.
\begin{figure}[ht]
\centering
\begin{tikzpicture}[line cap=round,line join=round,>=triangle 45,x=1.1cm,y=1.1cm]
\draw [line width=1pt] (-2,0)-- (0,0);

\begin{scriptsize}
\draw [fill=black] (-2,0) circle (2.5pt);
\draw[color=black] (-2,-0.25) node {$1$};
\draw [fill=black] (-1,0) circle (3.75pt);
\draw[color=black] (-1,-0.25) node {$2$};
\draw[color=black] (-1.5,0.15) node {$1$};
\draw [fill=black] (0,0) circle (3.75pt);
\draw[color=black] (0,-0.25) node {$3$};
\draw[color=black] (-0.5,0.15) node {$1$};
\end{scriptsize}
\end{tikzpicture}
\caption{Linear weighted graph in Example~\ref{example3pointsa1}.}\label{fig02a1}
\end{figure}
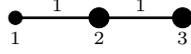 This is the weighted linear graph given in Figure~\ref{fig02a1}. Set $\Omega:= \{2,3\}$. We have that $u$ is a solution of the Gelfand problem $(P(\lambda e^s))$ if
$u(1)= 0$, and
\begin{equation}\label{problem3Exam5pta1cc}\left\{\begin{array}{ll}
u(3)=2\left(u(2) - \lambda  \, e^{u(2)}\right),
\\ \\
 u(2)  = u(3)-\lambda  \, e^{u(3)}.
\end{array}\right.
\end{equation}
Solutions of this problem are illustrated in Figure~\ref{dostres001}.
\begin{figure}[ht]
  \centering
  \begin{subfigure}{0.2\textwidth}
    \centering
    \includegraphics[width=\linewidth]{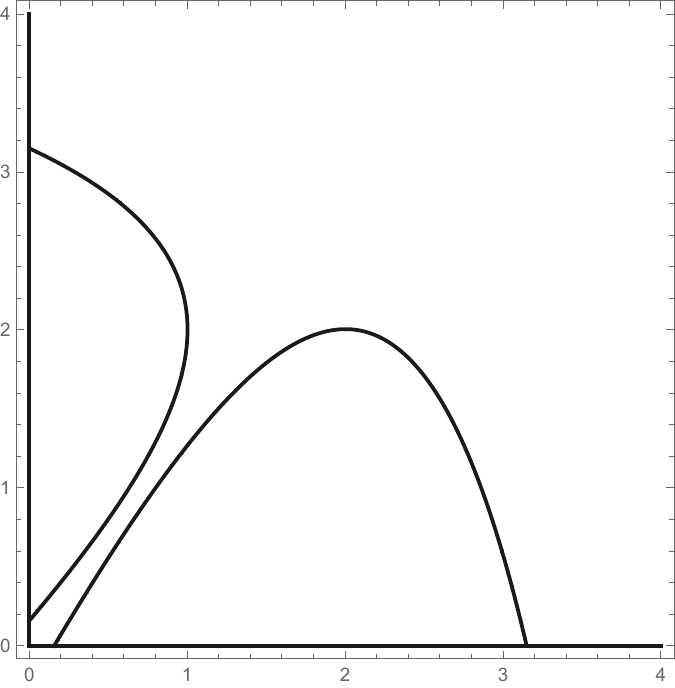}
    \caption{\tiny $\lambda>\lambda^*$.}
  \end{subfigure}
            \hspace{0.5cm}
     \begin{subfigure}{0.2\textwidth}
    \centering
    \includegraphics[width=\linewidth]{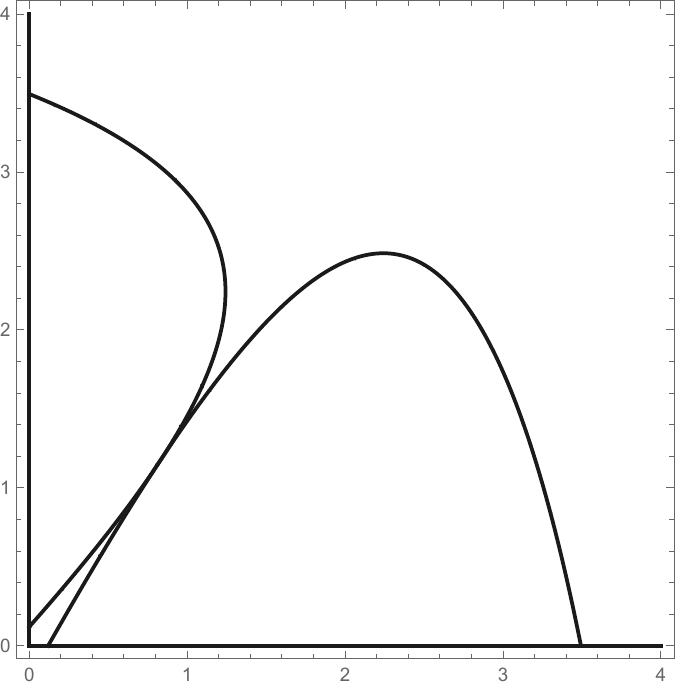}
    \caption{\tiny  $\lambda=\lambda^*$.}
            \end{subfigure}
            \hspace{0.5cm}
   \begin{subfigure}{0.2\textwidth}
    \centering
    \includegraphics[width=\linewidth]{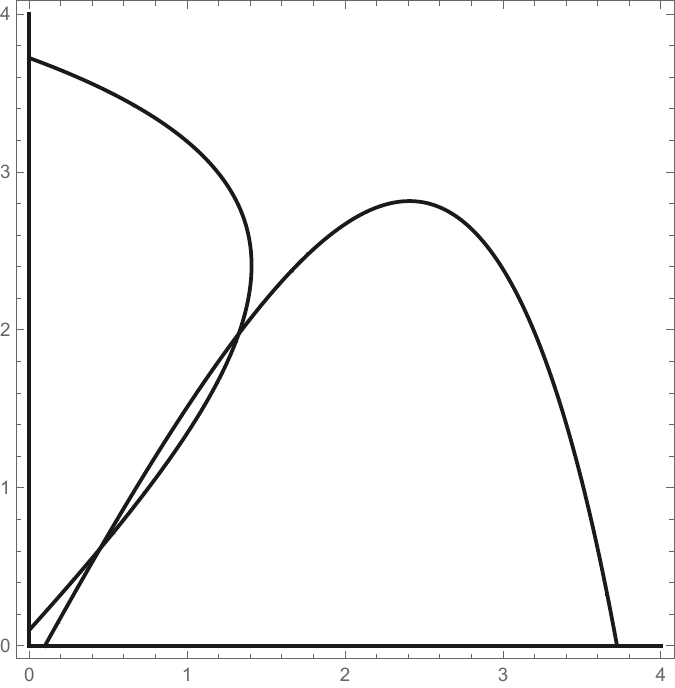}
    \caption{\tiny  $0<\lambda=\lambda_1<\lambda^*$.}
  \end{subfigure}

 \vspace{0.5cm}

  \begin{subfigure}{0.2\textwidth}
    \centering
    \includegraphics[width=\linewidth]{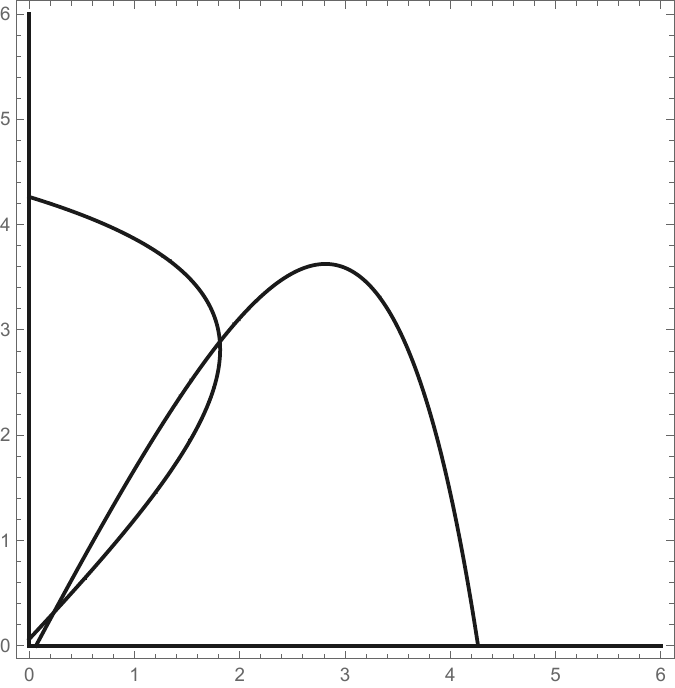}
    \caption{\tiny  $0<\lambda=\lambda_2<\lambda_1$.}
  \end{subfigure}
            \hspace{0.5cm}
     \begin{subfigure}{0.2\textwidth}
    \centering
    \includegraphics[width=\linewidth]{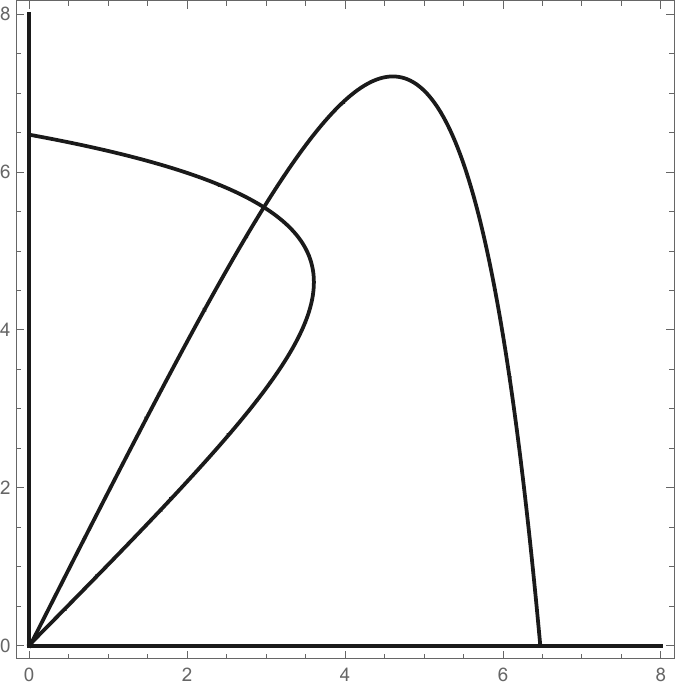}
    \caption{\tiny $0<\lambda=\lambda_3<\lambda_2$.}
  \end{subfigure}
  \hspace{0.5cm}
   \begin{subfigure}{0.2\textwidth}
    \centering
    \includegraphics[width=\linewidth]{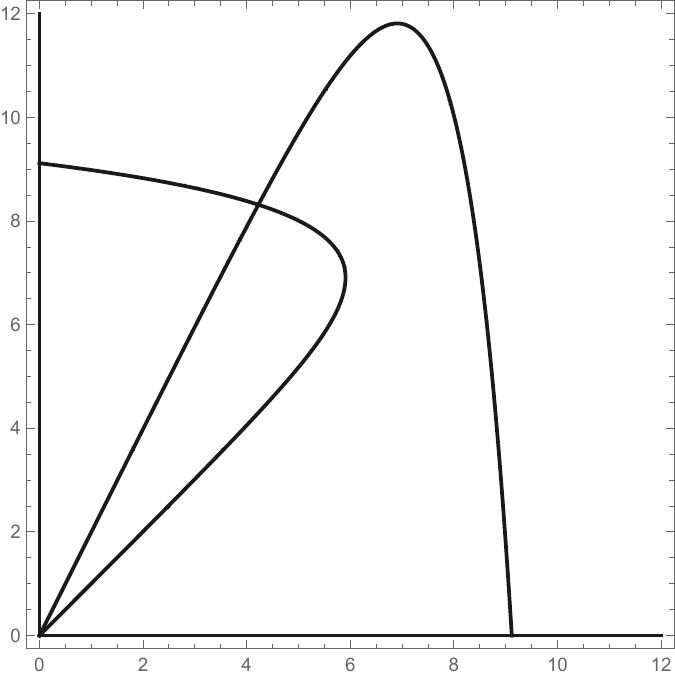}
    \caption{\tiny $0<\lambda=\lambda_4<\lambda_3$.}
  \end{subfigure}

  \caption{Solutions of~\eqref{problem3Exam5pta1cc}.}
  \label{dostres001}
\end{figure}
  There exist only  two branches of solutions for $0<\lambda<\lambda^*\simeq 0.106159$, $u_\lambda<u^\lambda$. The bifurcation diagram is given in Figure~\ref{fig5puntosdoble}.  One can see that $-W_0(-3\lambda)\le u_\lambda\le -W_0(-4\lambda)$ if $0<\lambda<\frac{1}{4e}$, hence $0.091970\simeq\frac{1}{4e}<\lambda^*$.

  Let us see, by using the implicit function theorem, that $$\lambda\mapsto (u_\lambda(2),u_\lambda(3)) \in C^\infty(]0,\lambda^*[:\mathbb{R}^2).$$  Indeed, for simplicity let us call $x=u(2)$ and  $y=u(3)$ in~\eqref{problem3Exam5pta1cc}, then such system can be written as
\begin{equation}\label{cinf001}\left\{\begin{array}{ll}
 F_1(x,y,\lambda):=2\left(x - \lambda  \, e^{x}\right)-y=0,
\\ \\
F_2(x,y,\lambda):=y-\lambda  \, e^{y}-x=0.
\end{array}\right.
\end{equation}
Where $F_i$, $i=1,2$, are $C^\infty$ functions in $\mathbb{R}^3$. Now since, for $\lambda=\lambda^*$ there is only one solution $x^*:=x(\lambda^*), y^*:=y(\lambda^*)$ to such system. Then, for
$$
H(x,y, \lambda):=\hbox{det}\left(\begin{array}{ll}
\frac{\partial F_1}{\partial x}(x,y,\lambda)&\frac{\partial F_1}{\partial y}(x,y,\lambda)\\
\\
\frac{\partial F_2}{\partial x}(x,y,\lambda)&\frac{\partial F_2}{\partial y}(x,y,\lambda)
\end{array}
\right)=(2-2\lambda e^{x})(1-\lambda e^{y})-1,
$$
Since for $\lambda > \lambda^*$ the system \eqref{cinf001} has not solution, by the implicit function theorem, we have $$H(x^*,y^*,\lambda^*)=0,$$
that is,
$$(2-2\lambda^*e^{x^*})(1-\lambda^*e^{y^*})-1=0.$$
Now, since for $0<\lambda<\lambda^*$ we have that there is a solution $x(\lambda):=u_\lambda(2)<x^*$, $y(\lambda):=u_\lambda(3)<y^*$ of system~\eqref{cinf001}, we have that
$$2-2\lambda e^{x(\lambda)}>2-2\lambda^* e^{x^*}$$
and
$$1-\lambda e^{y(\lambda)}>1-\lambda^* e^{y^*},$$
and we have that $1-\lambda^* e^{x^*}>0$ and $1-\lambda^* e^{y^*}>0$;  therefore
$$H(x(\lambda),y(\lambda), \lambda)=(2-2\lambda e^{x(\lambda)})(1-\lambda e^{y(\lambda)})-1>(2-2\lambda^* e^{x^*})(1-\lambda^* e^{y^*})-1=0.$$ Hence, by the implicit function theorem, we get that $\lambda\mapsto (u_\lambda(2),u_\lambda(3))$ is a $C^\infty$ function in $]0,\lambda^*[$.
\hfill $\blacksquare$
\end{example}

 In the next example we consider  a linear graph like in Example~\ref{example4pt} but without symmetry.

\begin{example}\label{ejemnosim}{\rm Let $G=(V,E)$ be the  graph $V= \{1,2,3,4\}$, with weights $$w_{12}=w_{23}=1,\ w_{34}=2$$  and $w_{i,j} = 0$ otherwise (see Figure~\ref{fig03esla8}).
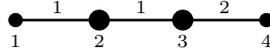
\begin{figure}[ht]
\centering
\begin{tikzpicture}[line cap=round,line join=round,>=triangle 45,x=1.1cm,y=1.1cm]
\draw [line width=1pt] (-2,0)-- (1,0);
\begin{scriptsize}
\draw [fill=black] (-2,0) circle (2.5pt);
\draw[color=black] (-2,-0.25) node {$1$};
\draw [fill=black] (-1,0) circle (3.75pt);
\draw[color=black] (-1,-0.25) node {$2$};
\draw[color=black] (-1.5,0.15) node {$1$};
\draw [fill=black] (0,0) circle (3.75pt);
\draw[color=black] (0,-0.25) node {$3$};
\draw[color=black] (-0.5,0.15) node {$1$};
\draw [fill=black] (1,0) circle (2.5pt);
\draw[color=black] (1,-0.25) node {$4$};
\draw[color=black] (0.5,0.15) node {$2$};
\end{scriptsize}
\end{tikzpicture}
\caption{Non symmetric graph in Example~\ref{ejemnosim}.}
\label{fig03esla8}
\end{figure}  In this case the branches $\lambda\mapsto u$ for the solutions of the Gelfand problem $(P(\lambda e^s))$ are given in Figure~\ref{PlostEx45_01}.
 \begin{figure}[h]
\includegraphics[scale=0.3]{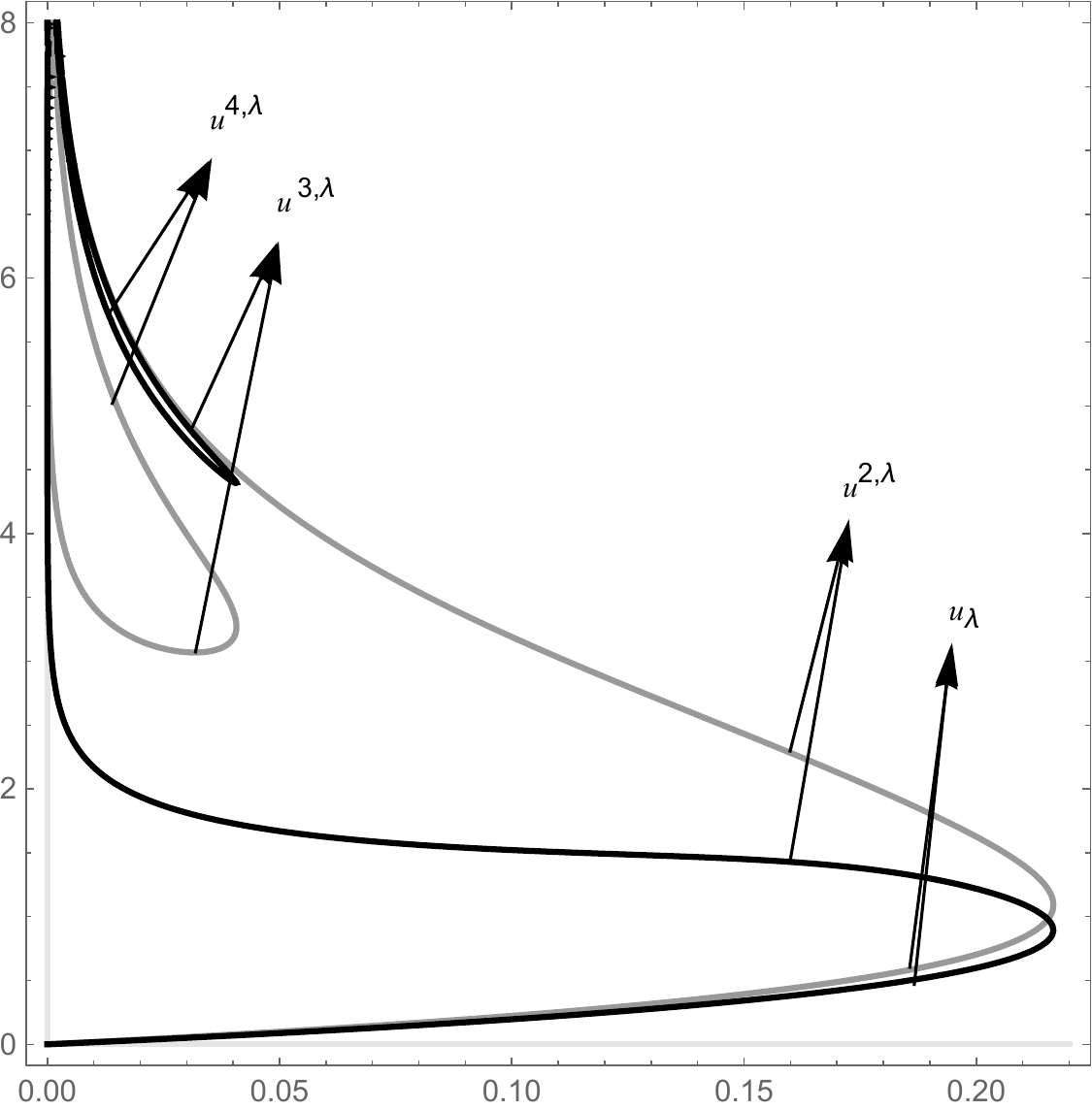}
\caption{Bifurcation diagram  for $\lambda\mapsto u$ in Example~\ref{ejemnosim} with a non symmetric linear weighed graph. $u(3)$ in black, $u(2)$ in gray. Four solutions: $u_\lambda$, $u^{2,\lambda}$, $u^{3,\lambda}$, $u^{4,\lambda}$.}
\label{PlostEx45_01}
\end{figure}
\hfill $\blacksquare$ }
\end{example}

\begin{example}\label{5puntosex}{\rm   Let $G=(V,E)$ the graph $V= \{1,2,3,4,5\}$, with weights $w_{i,i+1} = 1$, $i=1,2,3,4$ and $w_{i,j} = 0$ otherwise (see Figure~\ref{fig04}).
\begin{figure}[ht]
\centering
\begin{tikzpicture}[line cap=round,line join=round,>=triangle 45,x=1.1cm,y=1.1cm]
\draw [line width=1pt] (-2,0)-- (2,0);
\begin{scriptsize}
\draw [fill=black] (-2,0) circle (2.5pt);
\draw[color=black] (-2,-0.25) node {$1$};
\draw [fill=black] (-1,0) circle (3.75pt);
\draw[color=black] (-1,-0.25) node {$2$};
\draw[color=black] (-1.5,0.15) node {$1$};
\draw [fill=black] (0,0) circle (3.75pt);
\draw[color=black] (0,-0.25) node {$3$};
\draw[color=black] (-0.5,0.15) node {$1$};
\draw [fill=black] (1,0) circle (3.75pt);
\draw[color=black] (1,-0.25) node {$4$};
\draw[color=black] (0.5,0.15) node {$1$};
\draw [fill=black] (2,0) circle (2.5pt);
\draw[color=black] (2,-0.25) node {$5$};
\draw[color=black] (1.5,0.15) node {$1$};
\end{scriptsize}
\end{tikzpicture}
\caption{Linear weighted graph in Example~\ref{5puntosex}.}\label{fig04}
\end{figure}
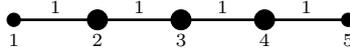 Let $\Omega:= \{2,3,4\}$.
Now, $u$ is a solution of  $(P(\lambda e^s))$ if and only
 $u(1) = u(5) = 0$,
\begin{equation}\label{problem3Exam5ptcc}\left\{\begin{array}{ll}
u(2) - \frac12 u(3) = \lambda  \, e^{u(2)},
\\ \\
u(3) - \frac12u(2)-\frac12u(4) = \lambda  \, e^{u(3)},
\\ \\
u(4) - \frac12 u(3) = \lambda  \, e^{u(4)}.
\end{array}\right.
\end{equation}
There exist two branches of symmetric solutions for $0<\lambda<\lambda^*\simeq 0.106159$, $u_\lambda<u^\lambda$.    The bifurcation diagram is given in Figure~\ref{fig5puntosdoble}.
 \begin{figure}[h]
\includegraphics[scale=0.4]{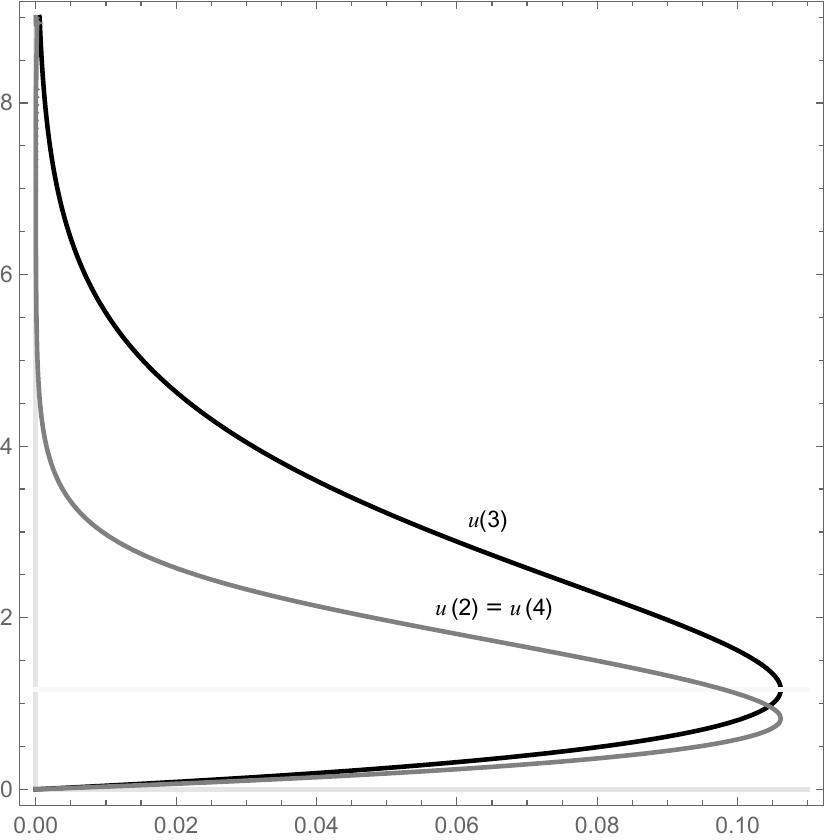}
\caption{Bifurcation diagram  for $\lambda\mapsto u$ in Example~\ref{example3pointsa1} (the branch of  solutions are given by $u(2)$ and $ u(3)$) and in Example~\ref{5puntosex} (the branch of  solutions are given by $u(2)=u(4)$ and $u(3)$). $\lambda^*\simeq 0.106159$, $u^*(3)\simeq1.164771$. }
\label{fig5puntosdoble}
\end{figure}
\hfill $\blacksquare$ }
\end{example}

 In the next example we use the approximation method  given in the proof of Theorem~\ref{existence} to characterize the minimal solutions.

\begin{example}\label{elsobre01}\rm Consider the graph given in Figure~\ref{elsobre02}, with $\Omega=\{1,2,3,4,5\}$, and all the edges weighted by $1$.
\begin{figure}[ht]
\centering
\begin{tikzpicture}[line cap=round,line join=round,>=triangle 45,x=1.1cm,y=1.1cm]
\draw [line width=1pt] (-1,0)-- (0,1);
\draw [line width=1pt] (0,1)-- (1,0);
\draw [line width=1pt] (1,0)-- (0,-1);
\draw [line width=1pt] (0,-1)-- (-1,0);
\draw [line width=1pt] (-1,0)-- (-2,0);
\draw [line width=1pt] (1,0)-- (2,0);
\draw [line width=1pt] (0,1)-- (0,2);
\draw [line width=1pt] (0,-1)-- (0,-2);
\draw [line width=1pt] (-1,0)-- (1,0);
\draw [line width=1pt] (0,-1)-- (0,1);
\begin{scriptsize}
\draw [fill=black] (-2,0) circle (2.5pt);
\draw [fill=black] (-1,0) circle (3.75pt);
\draw [fill=black] (0,1) circle (3.75pt);
\draw [fill=black] (0,2) circle (2.5pt);
\draw [fill=black] (1,0) circle (3.75pt);
\draw [fill=black] (2,0) circle (2.5pt);
\draw [fill=black] (0,-1) circle (3.75pt);
\draw [fill=black] (0,-2) circle (2.5pt);
\draw [fill=black] (0,0) circle (3.75pt);

\draw[color=black] (-2,-0.25) node {$-1$};
\draw[color=black] (-1,-0.25) node {$1$};

\draw[color=black] (1,-0.25) node {$3$};
\draw[color=black] (2,-0.25) node {$-3$};

\draw[color=black] (-0.35,2) node {$-4$};
\draw[color=black] (-0.25,1) node {$4$};

\draw[color=black] (-0.25,-1) node {$2$};
\draw[color=black] (-0.35,-2) node {$-2$};

\draw[color=black] (-0.15,-0.25) node {$5$};

\end{scriptsize}
\end{tikzpicture}
\caption{Graph in Example~\ref{elsobre01}, $d_x=4$ for all $x\in\Omega=\{1,2,3,4,5\}$. $\partial_{m^G}\Omega=\{-1,-2,-3,-4\}$.}\label{elsobre02}
\end{figure}
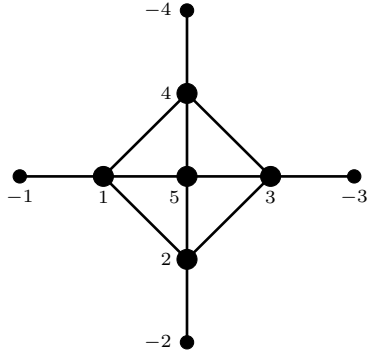
 The Gelfand problem $(P(\lambda e^s))$ has  a branch of solutions given by ($u=0$ on the boundary) $u(1)=u(2)=u(3)=u(4)$ and
\begin{equation}\label{elsobre03}\left\{\begin{array}{ll}
2 u(1)- u(5)= 4\lambda  \, e^{u(1)},
\\ \\
 u(5)  - u(1)=-\lambda  \, e^{u(5)}.
\end{array}\right.
\end{equation}
And the minimal branch of solutions satisfies the above properties. Indeed, the approximation method used in  the proof of Theorem~\ref{existence} to get the minimal solutions can be written in the following way:
$$\left(\begin{array}{c}u^0(1)\\ u^0(2)\\ u^0(3)\\ u^0(4)\\ u^0(5)\end{array}\right)=\left(\begin{array}{c}0\\ 0\\ 0\\ 0\\ 0\end{array}\right),$$ and, for $i=1,2,3,...$,
\begin{equation}\label{recseq01}\left(\begin{array}{c}u^i(1)\\ u^i(2)\\ u^i(3)\\ u^i(4)\\ u^i(5)\end{array}\right)=\lambda A^{-1}\left(\begin{array}{c}d_1e^{u^{i-1}(1)}\\ d_2e^{u^{i-1}(2)}\\ d_3e^{u^{i-1}(3)}\\ d_4e^{u^{i-1}(4)}\\ d_5 e^{u^{i-1}(5)}\end{array}\right),
\end{equation}
where
$$A=\left(\begin{array}{ccccc}
4&-1&0&-1&-1\\
-1&4&-1&0&-1\\
0&-1&4&-1&-1\\
-1&0&-1&4&-1\\
-1&-1&-1&-1&4
\end{array}\right).$$
 Now,
$$A^{-1}=\frac{1}{384}\left(\begin{array}{ccccc}
160&80&64&80&96\\
80&160&80&64&96\\
64&80&160&80&96\\
80&64&80&160&96\\
96&96&96&96&192
\end{array}\right),$$
and $d_1=d_2=d_3=d_4=d_5=4$; then   it is easy to see, from the recursive expression~\eqref{recseq01},  that
$$u^i(1)= u^i(2)= u^i(3)= u^i(4).$$ Now, since
$$u_\lambda(j)=\lim_{i\to \infty}u^{i}(j),\quad j=1,2,3,4,5,$$
we get that $u_\lambda$ satisfies $u_\lambda(1)=u_\lambda(2)=u_\lambda(3)=u_\lambda(4)$ and~\eqref{elsobre03}.

 The recursive sequence~\eqref{recseq01} is reminiscent of the power tower sequences for obtaining the Lambert $W$ function (see for example~\cite[Theorem 8]{javiertoledo}). This method can be easly implemented to approximate minimal solutions of  Gelfand type problems in any graph.
For example, $u^2$ is given by
$$\left(\begin{array}{c}u^2(1)\\ u^2(2)\\ u^2(3)\\ u^2(4)\\ u^2(5)\end{array}\right)=\left(\begin{array}{c}4\lambda e^{5\lambda}+\lambda e^{6\lambda}\\ 4\lambda e^{5\lambda}+\lambda e^{6\lambda}\\ 4\lambda e^{5\lambda}+\lambda e^{6\lambda}\\ 4\lambda e^{5\lambda}+\lambda e^{6\lambda}\\ 4\lambda e^{5\lambda}+2\lambda e^{6\lambda}\end{array}\right).$$
\hfill $\blacksquare$
\end{example}

  In all the above examples  we see that,    there is  a   continuum branch of the solutions $u^\lambda$ such that
\begin{equation}\label{grad02}  \lim_{\lambda\to 0^+}||u^\lambda||_\infty= +\infty.
\end{equation}

   Let us see that this is a general property.

   The proof of the following result is a consequence of Brouwer and Leray-Schauder degree theory; we use as a reference the book of Ambrosetti and Arcoya~\cite{AmbAr} for notation and results.

\begin{theorem}\label{grad01NEW}   Let $G = (V(G),E(G))$ be a weighted graph and $\Omega \subset V(G)$ finite and $m^G$-connected.
Let $f\in \mathcal{C}^1([0,\infty[)$ be   a strictly convex function satisfying~\eqref{H}.
Set $$S^*=\{(\lambda,u)\in[0,\lambda^*]\times  L^\infty(\Omega_m, \nu):u \hbox{ is a solution of }(P(\lambda f)) \}.$$
Then, for each $0<\epsilon<\lambda^*$, there exists $0<\lambda<\epsilon$ such that the problem
$(P(\lambda f))$
has a   solution $ u^\lambda$  with
$$||u^\lambda||_{ L^\infty(\Omega_m, \nu)}=g_2^{-1}(\epsilon),$$
where $g^{-1}_2$ is given in Remark~\ref{remgi} (and satisfies $\lim_{s\to 0^+}g_2^{-1}(s)= +\infty$).
Moreover, the   connected component of $S^*$ that contains   $(0,0)$ also contains $(\lambda,u^\lambda)$.
\end{theorem}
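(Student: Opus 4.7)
The plan is to reduce $(P(\lambda f))$ to a finite-dimensional fixed-point equation and apply a global Leray--Schauder/Rabinowitz continuation argument in conjunction with the a priori bound of Corollary~\ref{BoundI}. Let $X:=\{u:\Omega_m\to\mathbb R\,:\,u|_{\partial_m\Omega}=0\}$, which has dimension $|\Omega|$. By Theorem~\ref{EUp}, the Dirichlet inverse $K:=(-\Delta_m)^{-1}$ is a well-defined linear operator on $X$, so $(P(\lambda f))$ is equivalent to the fixed-point equation $u=T_\lambda(u):=\lambda K(f(u))$. In finite dimensions $T_\lambda$ is continuous and (trivially) compact, and since $f\ge f(0)>0$ on $[0,\infty)$ and $K$ preserves positivity, the maximum principle forces every fixed point into the positive cone $X_+$.

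Fix $0<\epsilon<\lambda^*$ and set $R:=g_2^{-1}(\epsilon)$. Corollary~\ref{BoundI} gives $0\le u(x)\le g_2^{-1}(\lambda)$ for every $(\lambda,u)\in S^*$ with $\lambda>0$; since $g_2^{-1}$ is strictly decreasing, $\|u\|_\infty<R$ whenever $\lambda\in(\epsilon,\lambda^*]$. The heart of the proof is to show that the connected component $\mathcal{C}_0$ of $S^*$ containing $(0,0)$ is unbounded in the $u$-direction. To see this, fix $\Lambda>\lambda^*$ and regard $(\lambda,u)\mapsto I-T_\lambda(u)$ as a compact homotopy on $[0,\Lambda]\times X$. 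Since $T_0\equiv 0$, we have $\deg(I-T_0,B_\rho,0)=1$ for every $\rho>0$, so the local Leray--Schauder index of $(0,0)$ equals $+1$; on the other hand, Theorem~\ref{existence}(a) ensures that $T_\lambda$ has no fixed point at all for $\lambda>\lambda^*$. The global continuation alternative (cf.~\cite{AmbAr}) then forces $\mathcal{C}_0$ either to reach $\{\Lambda\}\times X$---excluded by non-existence---or to be unbounded. Because $\lambda$ is confined to $[0,\lambda^*]$ on $\mathcal{C}_0$, the unboundedness must occur in $u$: for every $M>0$ there is $(\mu,v)\in\mathcal{C}_0$ with $\|v\|_\infty>M$.

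Connectedness of $\mathcal{C}_0$ and continuity of $u\mapsto\|u\|_\infty$ then yield some $(\lambda,u^\lambda)\in\mathcal{C}_0$ with $\|u^\lambda\|_\infty=R=g_2^{-1}(\epsilon)$. The a priori bound gives $\lambda\le\epsilon$; the strict inequality $\lambda<\epsilon$ follows from a strong-maximum argument at a point $x_0\in\Omega$ where $u^\lambda(x_0)=R$: assuming $\lambda=\epsilon$, the identity $\lambda f(R)=R-\int u^\lambda\,dm_{x_0}$ together with $\epsilon f(R)=R$ forces $\int u^\lambda\,dm_{x_0}=0$, so $u^\lambda$ vanishes on all $m^G$-neighbors of $x_0$; $m^G$-connectedness of $\Omega$ provides at least one such neighbor $y\in\Omega$, and at $y$ we reach the contradiction $\lambda f(0)=-\Delta_m u^\lambda(y)=-\int u^\lambda\,dm_y\le 0$ while $f(0)>0$. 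Finally $\lambda>0$ because $(P(0))$ admits only the null solution. The main technical obstacle is the Rabinowitz-type argument establishing that $\mathcal{C}_0$ is unbounded; the remaining ingredients---a priori bound, maximum principle, compactness of $T_\lambda$---are already in place from the preceding parts of the paper or are automatic in the finite-dimensional setting.
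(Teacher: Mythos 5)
Your proposal is correct and follows essentially the same route as the paper: a finite-dimensional compact fixed-point reformulation, nonzero Leray--Schauder degree at $\lambda=0$, the global continuation alternative from Ambrosetti--Arcoya combined with non-existence for $\lambda>\lambda^*$ to force the component through the level $\|u\|_\infty=g_2^{-1}(\epsilon)$, and the a priori bound of Corollary~\ref{BoundI} to locate $\lambda$ in $]0,\epsilon[$. Your strong-maximum-principle argument ruling out $\lambda=\epsilon$ is in fact slightly more careful than the paper's treatment of that boundary case.
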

\begin{proof}  We can assume that $\Omega=\{1,2,...,n\}$ and that $\partial\Omega=\{0\}$.
    Observe first that, for $d_i=\sum_{j=0}^{n}w_{ij}$, and $x=(x_1,x_2,...,x_n)=(u(1),u(2),...,u(n))$, the problem
$(P(\lambda f))$ can be written as
\begin{equation}\label{grad04}\Phi(\lambda,x):=x-T(\lambda,x)=0,
\end{equation}
for   $T=(T_1,T_2,...,T_n)$ given by the smooth functions
$$T_i(\lambda,x)= \sum_{j=1}^{n}\frac{w_{ij}}{d_i}x_j+\lambda f(x_i),\quad i=1,2,...,n,
$$
being  $T$ is compact.  Note that by Corollary \ref{BoundI}, if  $$(u(0),u(1),u(2),...,u(n))=(0,x_1,x_2,...,x_n),$$ is a solution of problem
$(P(\lambda f))$ we have $$0\leq x_i \leq g^{-1}_2(\lambda).$$

For $0<\epsilon<\lambda^*$, set the open bounded set in $\mathbb{R}^n$ given by $$A=]-1,g^{-1}_2(\epsilon)[\times ]-1,g^{-1}_2(\epsilon)[\times ...\times ]-1,g^{-1}_2(\epsilon)[.$$
Since $0$ is the unique solution of~\eqref{grad04} for $\lambda=0$, we have that
the degree
$$\hbox{deg}(\Phi(0,.),A,0)
=\hbox{sign}\left(\hbox{det}(I-J_T(0,0)\right)\neq 0,$$
where  $J_T(0,0)$ is   the Jacobian  matrix of $T(0,x)$ at $x=0$.

Suppose that the connected component  $C$ of $$S:=\{(\lambda,x)\in[0,\lambda^*+1]\times \overline{A}:\Phi(\lambda,x)=0 \}$$ containing $(0,0)$ does not intersect $[0,\lambda^*+1]\times \partial A$. Then, arguing as in the proof of~\cite[Theorem 4.3.4]{AmbAr} we obtain  an open bounded set $B$ in $[0,\lambda^*+1]\times \mathbb{R}^n$ with
$C\subset B$ such that, on account of the general homotopy property (see~~\cite[Proposition 4.2.5]{AmbAr}),
$$0\neq\hbox{deg}(\Phi(0,.),B_\lambda,0)
=\hbox{deg}(\Phi(\lambda^*+1,.),B_{\lambda^*+1},0),$$
(where $B_\lambda=\{x:(\lambda,x)\in B\}$) which implies that~\eqref{grad04} has a solution  for $\lambda=\lambda^*+1$, which is false.

Therefore, $C$ intersects the set $[0,\lambda^*+1]\times\partial A$, but, since~\eqref{grad04} has no solution on  $[\epsilon,\lambda^*+1]\times\partial A$ and the only solution at $\lambda=0$ is $0$, we have that it has a solution on $]0,\epsilon[\times\partial A$,  from where the result follows, since   any solution must be non-negative.
\end{proof}

 In the next example we consider  the graph defined in Example \ref{example4pt} but for the Gelfand-type problem $(P(\lambda f))$ in which we consider a  non-convex function $f(s)$ (instead of the convex function $f(s)=e^s$) for which $\lambda\mapsto u_\lambda$ is not continuous.

\begin{example}\label{exnocont} \rm Let $G$ be the graph given in Example \ref{example4pt} and  the same $\Omega$ given there, and consider the Gelfand-type problem $(P(\lambda f))$ with  $f(s)=s^4-10s^3+24s^2+36s+1$, which satisfies~\eqref{H} and is not convex.     For this problem we have two symmetric solutions, $u_\lambda$ and $u^\lambda$, and there are up to six solutions for some interval of $\lambda$.   The  minimal branch $\lambda\to u_\lambda$ is not continuous. The   bifurcation diagram is given in Figure~\ref{no_continuo_++}.
\begin{figure}[ht]
\includegraphics[scale=0.5]{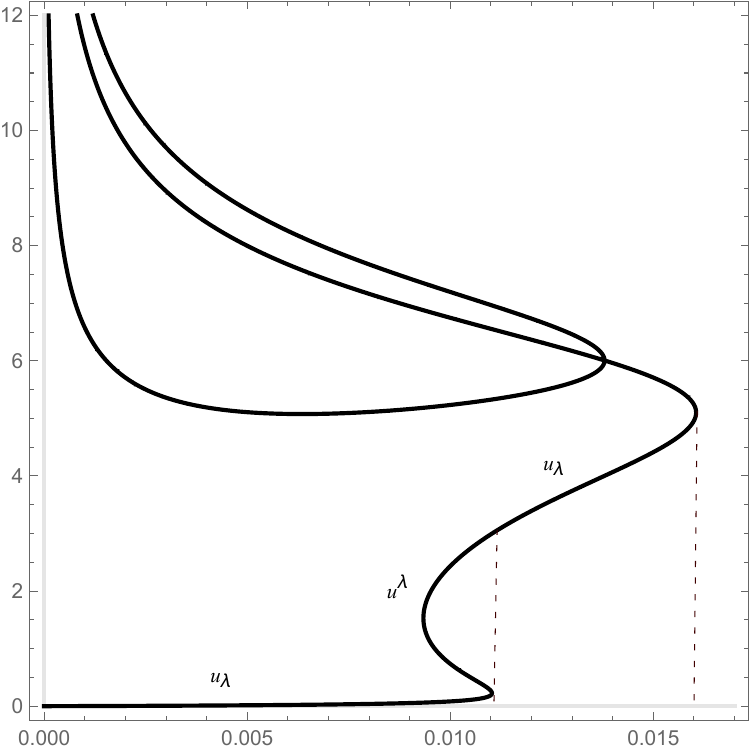}
\caption{Branches of $(\lambda, u)$ in Example~\ref{exnocont}. $(\lambda,  u_\lambda)$ is not continuous. $\lambda^*\simeq 0.0161546$, $u^*\simeq5.1007955$.}
\label{no_continuo_++}
\end{figure}
\hfill $\blacksquare$
\end{example}

 In the next example we see that there is a convex function $f$ for which there are infinitely many solutions for  $(P(\lambda^* f))$.

\begin{example}\label{infsols01}\rm  Let $G$ be the graph given in Example \ref{example4pt} and  the same $\Omega$ given there, and consider the Gelfand-type problem $(P(\lambda f))$ with $f$ the $\mathcal{C}^1$ and convex function$$f(s)=\left\{
\begin{array}{ll}
(s-1)^2+2(s-1)+2,& 0\le s<1,\\[6pt]
2s, & 1\le s\le 2,\\[6pt]
(s-2)^2+2(s-2)+2, & s>2.
\end{array}
\right.
$$
The solutions must satisfy
\begin{equation}\label{pb000012infsol}\left\{\begin{array}{ll}
u(2)  -\frac{1}{2}u(3) = \lambda  \, f(u(2)),
\\ \\
u(3)  -\frac{1}{2}u(2) = \lambda  \, f(u(3)).
\end{array}\right.
\end{equation}
This problem has two symmetric branches of solutions, $u_\lambda$ and $u^\lambda$. Problem  $(P(\lambda^* f))$ has infinite solutions, see Figure~\ref{landalargo01}.
\begin{figure}[ht]
  \centering
  \begin{subfigure}{0.25\textwidth}
    \centering
    \includegraphics[width=\linewidth]{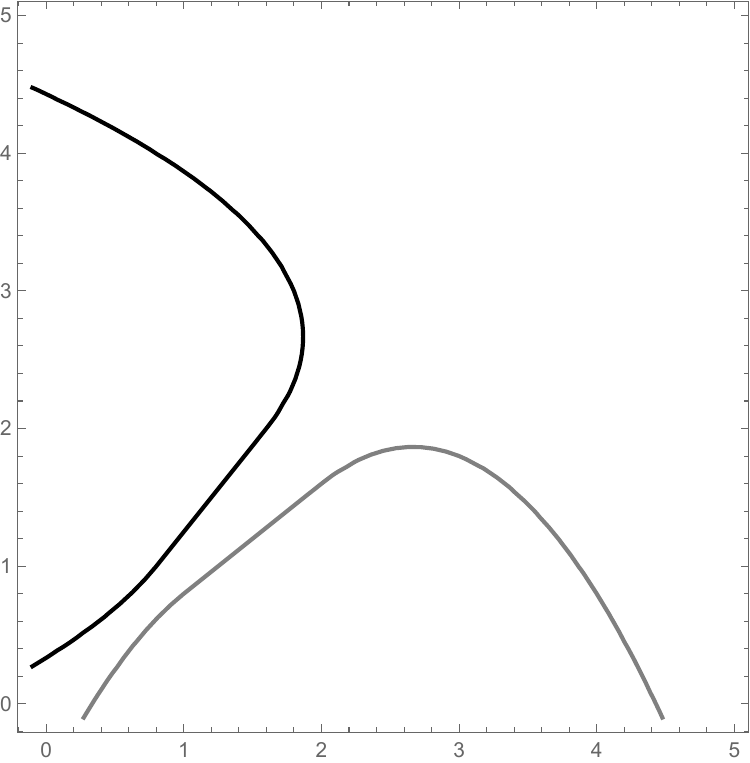}
    \caption{No solutions for $\lambda>\lambda^*$.}
  \end{subfigure}
  \hspace{0.5cm}
   \begin{subfigure}{0.25\textwidth}
    \centering
    \includegraphics[width=\linewidth]{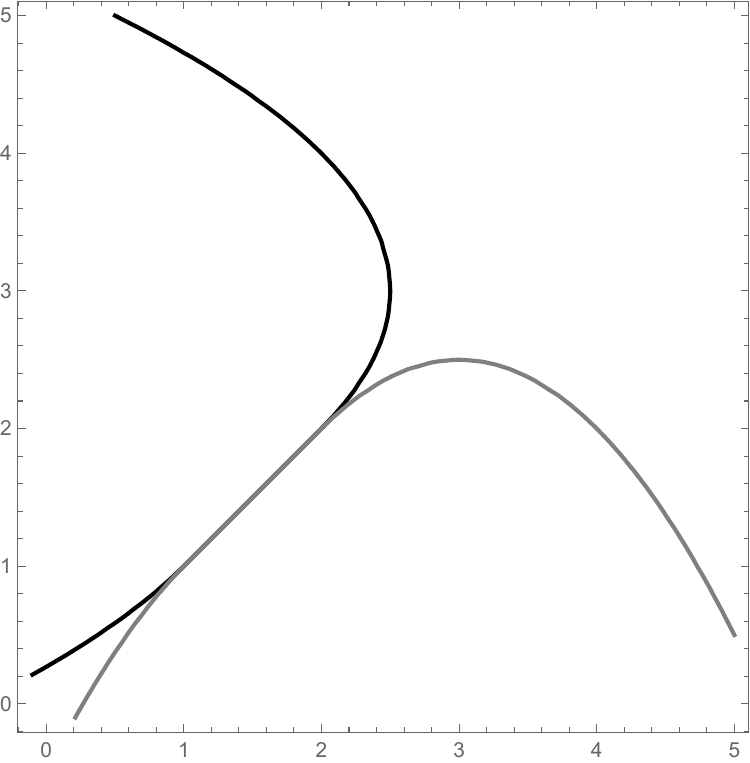}
    \caption{Infinitely many solutions for $\lambda^*=0.25$.}
  \end{subfigure}
  \hspace{0.5cm}
  \begin{subfigure}{0.25\textwidth}
    \centering
    \includegraphics[width=\linewidth]{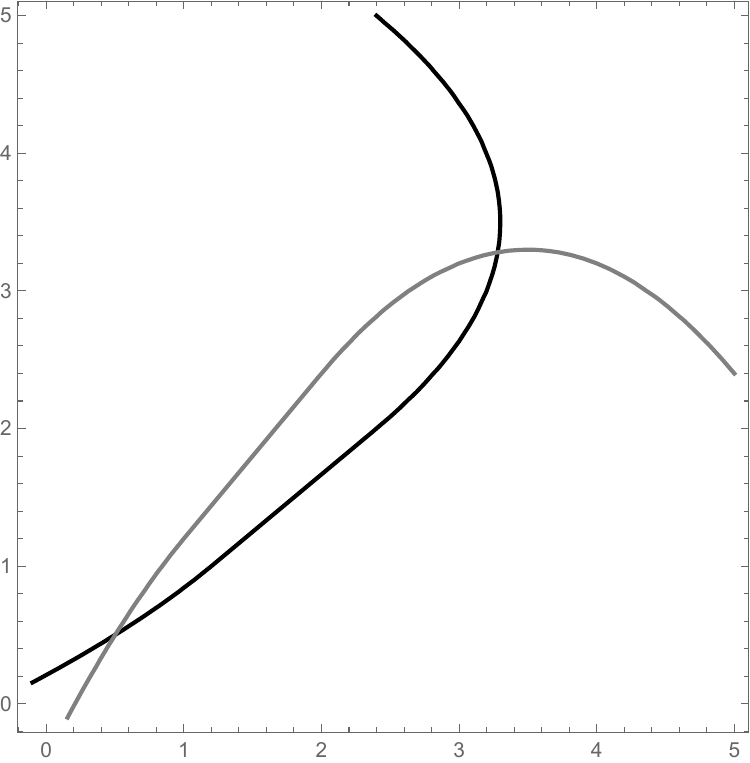}
    \caption{Two solutions for $0< \lambda<\lambda^*$.}
  \end{subfigure}
 \caption{Solutions  of~\eqref{pb000012infsol}.}
  \label{landalargo01}
\end{figure}
The   bifurcation diagram is given in Figure~\ref{infinitasols}.
\begin{figure}[h]
\includegraphics[scale=0.5]{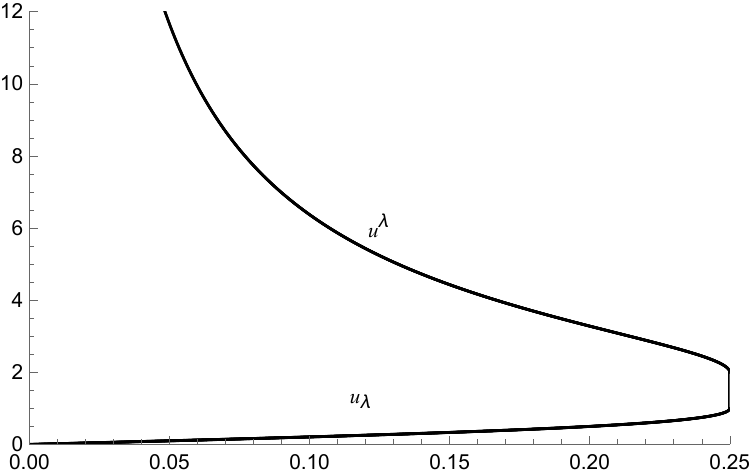}
\caption{Branches of $(\lambda, u)$ in Example~\ref{infsols01}. $\lambda^*\simeq 0.25$.}
\label{infinitasols}
\end{figure}
\hfill $\blacksquare$
\end{example}

\subsection{The  problem for power-like functions}

  Let us now see an example   for $f(s) = (1 +s)^2$.  In this case we have that
$$
g^{-1}_1(\lambda)=\frac{2\lambda}{1-2\lambda+\sqrt{1-4\lambda}}\leq u \leq  g^{-1}_2(\lambda)=\frac{1-2\lambda+\sqrt{1-4\lambda}}{2\lambda},
$$
for any   solution to $u$  to $(P(\lambda(1+s)^2))$, $0 < \lambda \le  \lambda^*\le\frac{\lambda_m(\Omega)}{4}$. See Figure~\ref{g1g2potencia}.
\begin{figure}[h]
\includegraphics[scale=0.5]{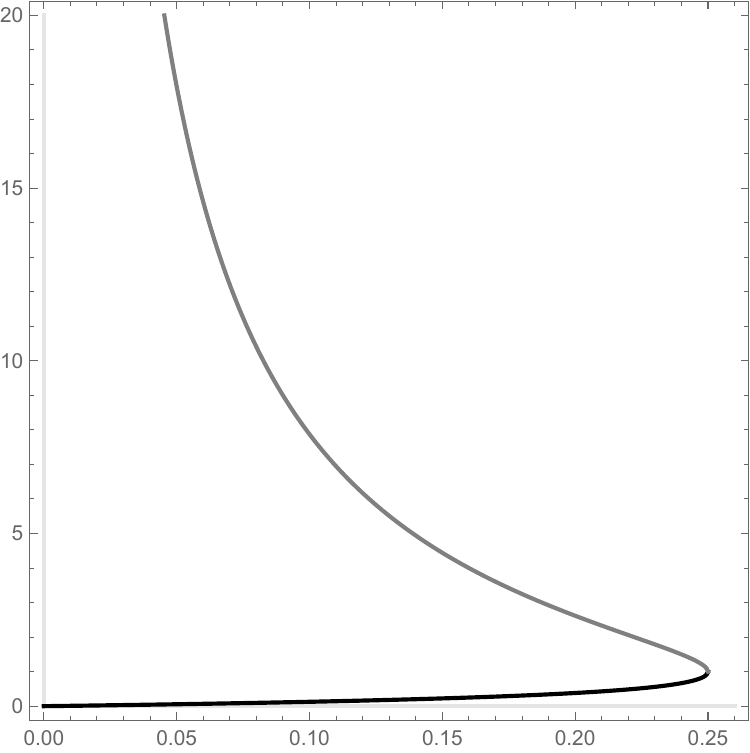}
\caption{$g^{-1}_1(\lambda)$ and $g^{-1}_2(\lambda)$ for $f(s)=(1 +s)^2$.}
\label{g1g2potencia}
\end{figure}

\begin{example}\label{ejempol2}\rm Let $G$ be the graph given in Example \ref{example4pt} and  the same $\Omega$ given there, and consider the Gelfand-type problem $(P(\lambda(1+s)^2))$.  We have that $u$ is a solution to problem $(P(\lambda((1+s)^2))$  if and only if it verifies
$$\left\{\begin{array}{ll}
\displaystyle- \Delta_{m^G} u(2) =\lambda \, (1 +u(2))^2,\\ \\ - \Delta_{m^G} u(3) =\lambda \, (1+u(3))^2,
\\ \\
u(1) = u(4) = 0,
\end{array}\right.
$$
which is equivalent to  $u(1) = u(4) = 0$ and
$$\left\{\begin{array}{ll}
u(2) - \frac12 u(3) = \lambda \, (1 +u(2))^2,
\\ \\
u(3) - \frac12 u(2) = \lambda \, (1 +u(3))^2.
\end{array}\right.
$$
This problem has two symmetric branches of solutions, $u_\lambda$ and $u^\lambda$
  for $0 < \lambda < \lambda^*= \frac18,$
$$u_{\lambda}(2)  = u_{\lambda}(3) =  \frac{4\lambda}{1 - 2\lambda +\sqrt{1- 8 \lambda}},$$
$$    u^{\lambda}(2)  = u_{\lambda}(3) =\frac{ 1 - 4 \lambda  + \sqrt{1- 8 \lambda}}{4\lambda },$$
and, for $\lambda=\lambda^*$,
$u^*(2)  = u^*(3) = 1.$
See Figure~\ref{figplotpol2}.
\begin{figure}[ht]
\includegraphics[scale=0.5]{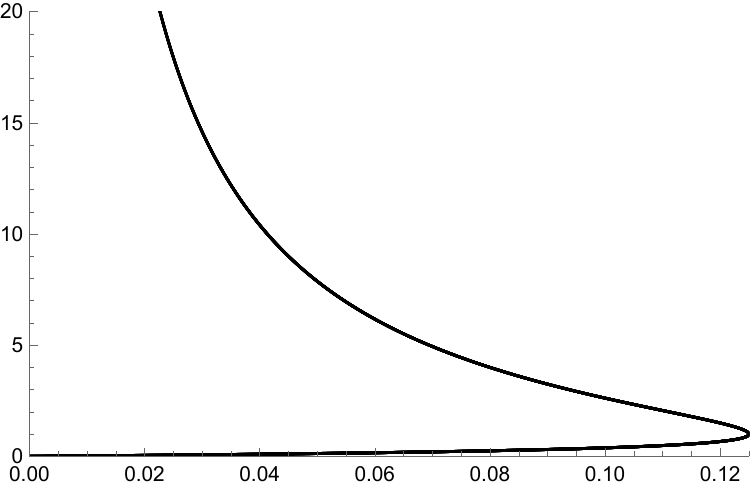}
\caption{Branches of $(\lambda, u)$ in Example~\ref{ejempol2}.}
\label{figplotpol2}
\end{figure}
\hfill $\blacksquare$
\end{example}

  In the next example we consider the same weighted graph but with $f(s)=1+s$, which is  not strictly convex. We see the no existence of extremal solution.

\begin{example}\label{aladm10}\rm
\rm  Let $G$ be the graph given in Example \ref{example4pt} and  the same $\Omega$ given there, and consider the Gelfand-type problem $(P(\lambda(1+s))$.  It is easy to see that
$u$ is a solution of $(P(\lambda(1+s))$  if and only
$$u(2)=u(3)=\frac{\lambda}{\frac12-\lambda}.$$
Hence in this case, we have that
$$\lambda^*=\lambda_m(\Omega)=\frac12,$$ all the solutions are minimal,
there is not extremal solution, see Figure~\ref{dim10}.
\begin{figure}[ht]
\includegraphics[scale=0.4]{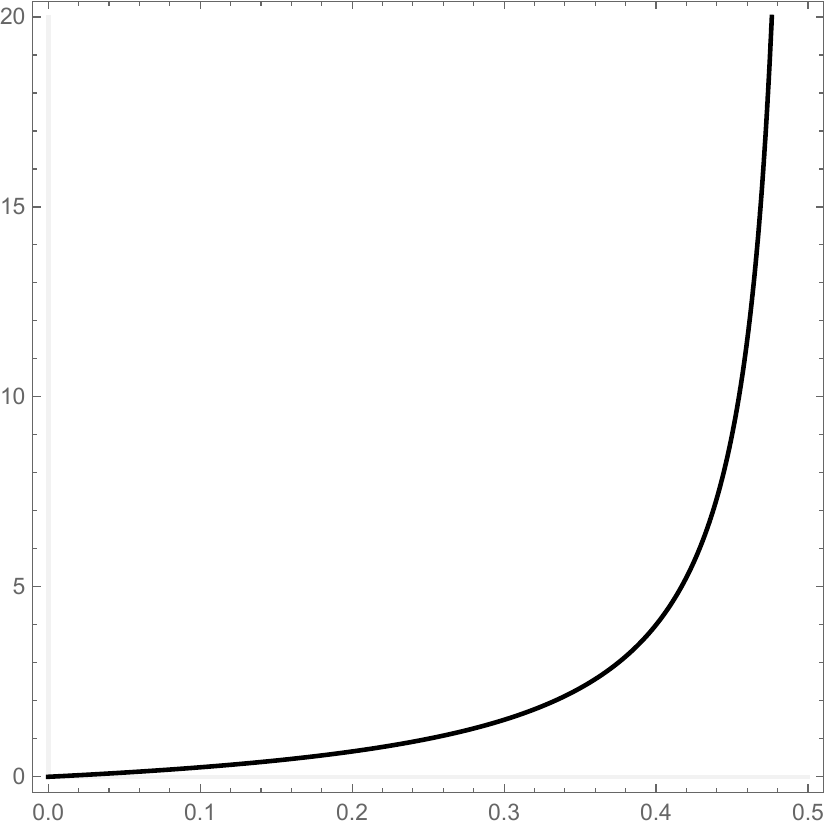}
\caption{Branch  of $(\lambda,u)$ in Example~\ref{aladm10}.}
\label{dim10}
\end{figure}
\hfill $\blacksquare$
\end{example}

\newpage
\section*{Acknowledgements}

The first and third authors  have been partially supported  by  Grant PID2022-136589NB-I00 funded by MCIN/AEI/10.13039/501100011033 and FEDER
and by Grant RED2022-134784-T funded by
MCIN/AEI/10.13039/501100011033.
 The second author is supported by (MCIU) Ministerio de Ciencia, Innovaci\'on y Universidades, Agencia Estatal de Investigaci\'on (AEI) and Fondo Europeo de Desarrollo Regional under Research Project PID2021-122122NB-I00 (FEDER) and by Junta de Andaluc\'ia FQM-116.

\end{document}